\documentclass[11pt]{article}

\usepackage[margin=1.1in]{geometry}

\newcommand{\red}{\color{red}}

\addtolength{\oddsidemargin}{-.5in}
\addtolength{\evensidemargin}{-.5in}
\addtolength{\textwidth}{1in}
\addtolength{\topmargin}{-.5in}
\addtolength{\textheight}{0.5in}

\usepackage[english]{babel}
\usepackage{xspace}
\usepackage{algorithm,algorithmicx}
\usepackage{tcolorbox}
\usepackage{tikz}

\usepackage{framed}
\usepackage{algpseudocode}
\usepackage{amsthm,nccmath}
\usepackage{amsmath,bm}
\usepackage[normalem]{ulem}
\usepackage{bigints}
\usepackage{amssymb}
\usepackage{caption}
\usepackage{graphicx}
\usepackage{cite}
\usepackage{enumerate}
\usepackage{multirow}
\usepackage{hyperref}
\usepackage{booktabs}
\usepackage{hyperref}
\usepackage{url}
\usepackage{color, colortbl}
\usepackage{graphicx}
\usepackage{amsmath,amssymb, bm}
\usepackage{algorithm}
\usepackage{algpseudocode}
\usepackage{xcolor}
\usepackage{flushend}
\usepackage{subfigure}
\usepackage[none]{hyphenat}
\usepackage{enumitem}
\usepackage[bottom]{footmisc}
\usepackage{hyperref}
\hypersetup{
	colorlinks   = true, 
	urlcolor     = blue, 
	linkcolor    = blue, 
	citecolor    = blue   
}

\usepackage{float}
\usepackage{epstopdf}
\usepackage{footnote}
\makesavenoteenv{tabular}
\makesavenoteenv{table}
\usepackage{bbm}
\usepackage{amsthm}
\usepackage[makeroom]{cancel}
\usepackage{stmaryrd}
\usepackage{amsmath,bm}
\usepackage{amssymb}
\usepackage{mathtools, cuted}
\usepackage{kantlipsum,setspace}


\theoremstyle{plain}
\newtheorem{theorem}{Theorem}[section]
\newtheorem{lem}[theorem]{Lemma}
\newtheorem*{lem*}{Lemma}

\newtheorem*{cor*}{Corollary}

\theoremstyle{definition}
\newtheorem{defn}{Definition}[section]
\newtheorem*{defn*}{Definition}
\newtheorem{assump}{Assumption}

\theoremstyle{remark}

\newtheorem*{rem*}{Remark}

\DeclareMathOperator*{\argmin}{arg\,min}

\pagestyle{plain} 

\newcommand{\aname}{{\textsf{SUSTAIN}}}

\def\du{d_{\sf up}}
\def\dl{d_{\sf lo}}

\title{{\bf A Near-Optimal  Algorithm for Stochastic Bilevel Optimization via Double-Momentum}}
\date{} 
\author{\large Prashant Khanduri$^\dagger$, Siliang Zeng$^\dagger$, Mingyi Hong$^\dagger$, Hoi-To Wai$^\ast$, \\
	\large Zhaoran Wang$^\ddagger$, and Zhuoran Yang$^\diamond$  \\[.5cm]
	\small $^{\dagger}$Department  of Electrical and Computer Engineering, \\
	\small University of Minnesota, MN, USA\\
	\small $^{\ast}$Department of Systems Engineering and Engineering Management,\\
	\small The Chinese University of Hong Kong, Hong Kong\\
	\small $^{\ddagger}$Department of Industrial Engineering and Management Sciences,\\
	\small Northwestern University, IL, USA\\
	\small $^{\diamond}$ Department of Operations Research and Financial Engineering,\\
	\small Princeton University, NJ, USA\\
	\small Email: \texttt{\{khand095, zeng0176, mhong\}@umn.edu}, \texttt{htwai@se.cuhk.edu.hk},\\ 
	\small \texttt{zhaoranwang@gmail.com}, \texttt{zy6@princeton.edu}} 

\begin{document}
	
	\maketitle
	
	\begin{abstract}
This paper proposes a new algorithm -- the  \underline{S}ingle-timescale Do\underline{u}ble-momentum \underline{St}ochastic \underline{A}pprox\underline{i}matio\underline{n} (\aname) -- for tackling stochastic unconstrained bilevel optimization problems. We focus on bilevel problems where the lower level subproblem is  strongly-convex and the upper level objective function is smooth. Unlike prior works which rely on \emph{two-timescale} or \emph{double loop} techniques, 
we design a stochastic momentum-assisted gradient estimator for both the upper and lower level updates. The latter allows us to control the error in the stochastic gradient updates due to inaccurate solution to both subproblems. If the upper objective function is smooth but possibly non-convex,
we show that {\aname}~requires $\mathcal{O}(\epsilon^{-3/2})$ 
iterations (each using ${\cal O}(1)$ samples) to find an $\epsilon$-stationary 
solution. 
The $\epsilon$-stationary 
solution is defined as the point whose squared norm  of the gradient of the outer function 
is less than or equal to $\epsilon$.  The total  number of stochastic gradient samples required for  the upper and lower level objective functions matches the best-known complexity for single-level stochastic gradient algorithms.
We also analyze the case when the upper level objective function is strongly-convex.
	\end{abstract}
	
	
\section{Introduction}
\label{sec: Intro}
Many learning and inference problems take a ``hierarchical" form, wherein the optimal solution of one problem affects the objective function of others \cite{Migdalas_Book_MultilevelOpt_2013}. Bilevel optimization  is often used to model problems of this kind with two levels of hierarchy \cite{Migdalas_Book_MultilevelOpt_2013, Dempe_Book_2002}, where the variables of an {\it upper level} problem depend on the optimizer of certain {\it lower level} problem. In this work, we consider unconstrained bilevel optimization problems of the form:
\begin{align}
	\begin{split}
	\label{Eq: BilevelProb_Deterministic}
	\textstyle \min_{x \in \mathbb{R}^{\du}} \hspace{-.2cm} & \quad  \ell(x) = f(x , y^\ast(x)) \coloneqq \mathbb{E}_{\xi}[f(x,y^\ast(x) ; \xi) ] \\
	\text{s.t.}& \textstyle \quad y^\ast(x) =  \argmin_{y \in \mathbb{R}^{\dl}} \big\{ g(x , y) \coloneqq \mathbb{E}_{\zeta}[g(x,y; \zeta) ] \big\},
	\end{split}
\end{align}
where $f,g : \mathbb{R}^{\du} \times \mathbb{R}^{\dl} \to \mathbb{R}$ with $x \in \mathbb{R}^{\du}$ and $y \in \mathbb{R}^{\dl}$;  $f(x , y; \xi)$ with $\xi \sim \pi_f$  (resp. $g(x,y; \zeta)$ with $\zeta \sim \pi_g$) represents a stochastic sample of the upper level objective (resp. lower level objective). Note here that the {\it upper level objective} $f$ depends on the minimizer of the {\it lower level objective} $g$, and we refer to $\ell(x)$ as the {\it outer function}. Throughout this paper, $g(x,y)$ is assumed to be  strongly-convex in $y$, which implies that  $\ell(x)$ is smooth but possibly non-convex.
	
The applications of \eqref{Eq: BilevelProb_Deterministic} include many machine learning problems that have a hierarchical structure. Examples are meta learning \cite{Franceschi_ICML_2018, Rajeswaran_MetaLearning_NIPS_2019}, data hyper-cleaning \cite{Shaban_TruncatedBackProp_2019}, hyper-parameter optimization \cite{Franceschi_ICML_2017, Franceschi_ICML_2018, Pedregosa_ICML_2016}, and reinforcement learning \cite{Konda_AC_NIPS_2000}, etc.. To better contextualize our study, below we describe examples on meta-learning problem and data hyper-cleaning problem:  

\emph{Example 1: Meta learning.}~~The meta learning problem aims to learn task specific parameters that generalize to a diverse set of tasks \cite{raghu2019rapid}. Suppose we have $M$ tasks $\{\mathcal{T}_i, i = 1, \ldots, M\}$ and each task has a corresponding loss function $L(x, y_i ; \xi_i)$ with $\xi_i$ representing a data sample for task $\mathcal{T}_i$, $x \in \mathbb{R}^{\du}$ the model parameters shared among tasks, and $y_i \in \mathbb{R}^{d_{\sf lo}^i}$ the task specific parameters. The goal of meta learning is then to solve the following problem:
\begin{align}
    \label{Eq: Meta_Leaning}
   & \textstyle \min_{x \in \mathbb{R}^{\du} }~ \big\{ L_{\sf ts}(x , \bar{y}^\ast(x))  \coloneqq \frac{1}{M} \sum_{i = 1}^M \mathbb{E}_{\xi_i \sim \mathcal{D}_i}[L (x , \bar{y}_i^\ast(x) ; \xi_i)] \big\} \nonumber\\
    & \textstyle ~\text{s.t.}~~ \bar{y}^\ast(x) \in \argmin_{\bar{y} \in \mathbb{R}^{\sum_{i=1}^M d_{\sf lo}^i} } L_{\sf tr}(x, \bar{y}) \coloneqq
    \frac{1}{M} \sum_{i = 1}^M \big( \mathbb{E}_{\zeta_i \sim \mathcal{S}_i} [ L(x , \bar{y}_i ; \zeta_i) ] + \mathcal{R}(y_i) \big),
\end{align}
where $\mathcal{R}(\cdot)$ is a strongly convex regularizer while $\mathcal{S}_i$ and $\mathcal{D}_i$ are the training and testing datasets for task $\mathcal{T}_i$. Compared to the number of tasks, the dataset sizes are usually small for meta-learning problems, so the stochasticity in tackling \eqref{Eq: Meta_Leaning} results from the fact that at each iteration we can only sample a subset  $m$ out of $M$ tasks. Note that this problem is a special case of \eqref{Eq: BilevelProb_Deterministic}. \hfill $\square$

\emph{Example 2: Data hyper-cleaning.}~~
The data hyper-cleaning is a hyperparameter optimization problem that aims to train a classifier model with a dataset of randomly corrupted labels \cite{Shaban_TruncatedBackProp_2019}. The optimization problem is formulated below: 
\begin{align}\label{eq:clean}
\textstyle \min_{x \in \mathbb{R}^{\du}}  &~~ \textstyle \ell(x) := \sum_{i \in \mathcal{D}_{\text{val}}} L(a_i^\top y^*(x), b_i) \\
\text{s.t.} &~~ \textstyle y^*(x) = \argmin_{y \in \mathbb{R}^{\dl}}  \big\{ c\| y \|^2 + \sum_{i\in \mathcal{D}_{\text{tr}}} \sigma(x_i) L(a_i^\top y , b_i) \big\} .\nonumber
\end{align}
In this problem, we have $\du = | {\cal D}_{\rm tr} |$ and $\dl$ is the dimension of the classifier.
Moreover, $(a_i,b_i)$ is the $i$th data point; $L(\cdot)$ is the loss function, with $y$ being the model parameter; $x_i$ is the parameter that determines the weight for the $i$th data sample, and $\sigma: \mathbb{R} \rightarrow \mathbb{R}_+$ is the weight function; $c>0$ is a regularization parameter; $\mathcal{D}_{\rm val}$ and $\mathcal{D}_{\rm tr}$ are validation and training sets, respectively.
Clearly, \eqref{eq:clean} is a special case of \eqref{Eq: BilevelProb_Deterministic} where the lower level problem finds the classifier $y^\ast(x)$ with the training set ${\cal D}_{\sf tr}$, and the upper level problem finds the best weights $x$ with respect to the validation set ${\cal D}_{\sf val}$. \hfill $\square$
	
\begin{table*}[t]
\centering
\renewcommand{\arraystretch}{1.25}
	\resizebox{\linewidth}{!}{\begin{tabular}{c c c c c} 
	\toprule
	Algorithm & {Sample (Upper, Lower)} &  Implementation & Batch Size & Per-Iteration Complexity     \\
	\midrule
	BSA \cite{Ghadimi_BSA_Arxiv_2018} & $ \mathcal{O}(\epsilon^{-2}),~ \mathcal{O}(\epsilon^{-3})$ &  Double loop  & $\mathcal{O}(1)$ & $\mathcal{O}(\dl^2\cdot \log T)$\\ 
	\hline
	stocBiO \cite{Ji_ProvablyFastBilevel_Arxiv_2020} & $\mathcal{O}(\epsilon^{-2}) , ~ \mathcal{O}(\epsilon^{-2})$ & Double loop   & $\mathcal{O}(\epsilon^{-1})$ & $\mathcal{O}(\dl^2\cdot \log T)$ \\
	\hline
	TTSA \cite{Hong_TTSA_Arxiv_2020} & $  \mathcal{O}(\epsilon^{-5/2}) ,~ \mathcal{O}(\epsilon^{-5/2})  $ & Single loop  & $\mathcal{O}(1)$ & $\mathcal{O}(\dl^2 \cdot \log T)$ \\
	\hline
	STABLE \cite{Chen_Arxiv_2021STABLE} & $\mathcal{O}(\epsilon^{-2}),~ \mathcal{O}(\epsilon^{-2})$ & Single loop & $\mathcal{O}(1)$ & $\mathcal{O}(\dl^3)$\\
    \hline
   {SVRB \cite{Guo_Arxiv_2021_SVRB}} & {$\mathcal{O}(\epsilon^{-3/2}),~ \mathcal{O}(\epsilon^{-3/2})$} & {Single loop}  &  {$\mathcal{O}(1)$} & {$\mathcal{O}(\dl^3  )$} \\
    \hline
	\cellcolor{black!8} \aname~(this work) & \cellcolor{black!8} $\mathcal{O}(\epsilon^{-3/2}),~ \mathcal{O}(\epsilon^{-3/2})$ & \cellcolor{black!8} Single loop  & \cellcolor{black!8} $\mathcal{O}(1)$ & \cellcolor{black!8} $\mathcal{O}(\dl^2 \cdot \log T)$\\
	\bottomrule
	\end{tabular}}
\caption{\small Comparison of  the number of upper and lower level gradient samples required to achieve an $\epsilon$-stationary point in Definition \ref{Def: StationaryPt}. For the algorithms with $\mathcal{O}(\dl^2 \cdot \log T)$ per-iteration dependence, the Hessian inverse can be computed via matrix vector products; algorithms with $\mathcal{O}(\dl^3)$ dependency requires Hessian inverses and Hessian projections, which incur heavy computational cost. } \label{Table: Comparison}
\vspace{-0.5cm}
\end{table*}

A natural approach to tackling \eqref{Eq: BilevelProb_Deterministic} is to apply alternating stochastic gradient (SG) updates. Let $\beta, \alpha > 0$ be some step sizes, one performs the recursion
\begin{align} \label{eq:roughSGD}
	& y^+ \leftarrow y - \beta \hat{\nabla}_y g( x, y ) , \quad x^+ \leftarrow x - \alpha \hat{\nabla}_x \hat{\ell}( x; y ) 
\end{align}
such that $\hat{\nabla}_y g( x, y )$, $\hat{\nabla}_x \hat{\ell}( x; y )$ are stochastic estimates of $\nabla_y g(x,y)$, $\nabla \ell(x)$, respectively. 
{Notice that \eqref{eq:roughSGD} is significantly different from the standard alternating primal-dual gradient algorithm for saddle point problems.}
Particularly, the design of $\hat{\nabla}_x \hat{\ell}( x; y )$ is crucial to the SG scheme in \eqref{eq:roughSGD}. Observe that $\nabla \ell(x)$ can be computed using the implicit function theorem, and its evaluation requires $f(\cdot, \cdot)$ and $y^\star(x)$, the minimizer of $g(x,y)$ given $x$ (cf. \eqref{Eq: True_Grad_Deterministic}). This gives rise to a unique challenge to bilevel optimization, where $y^\star(x)$ can only be {\it approximated} by $y$ obtained in the first relation of \eqref{eq:roughSGD}. 

In light of the above observations, previous endeavors have considered two approaches to improve the estimate of $y^\star(x)$ while $\hat{\nabla}_x \hat{\ell}( x; y )$ is used as a {\it biased} approximation of $\nabla \ell(x)$. 
The first approach is to apply the \emph{double-loop} algorithms. For example, \cite{Ghadimi_BSA_Arxiv_2018} proposed to repeat the $y^+$ update for multiple times to obtain a better estimate of $y^\star(x)$ before performing the $x^+$ update, \cite{Ji_ProvablyFastBilevel_Arxiv_2020} proposed to take a large batch size to estimate $\nabla_y g(x,y)$. While simple to analyze, these algorithms may suffer from a poor sample complexity for the inner problem. 
The second approach is to apply \emph{single-loop} algorithms where the $y^+$-updates are performed simultaneously with the $x^+$-updates. Instead, advanced techniques are utilized that allows $y^+$ to accurately track $y^\star(x)$. For example, \cite{Hong_TTSA_Arxiv_2020} suggested to tune the step size schedule with $\beta \gg \alpha$, \cite{Chen_Arxiv_2021STABLE, Guo_Arxiv_2021_SVRB} proposed single-timescale algorithms with advanced variance reduction techniques. However, the latter two algorithms require Hessian projections onto a compact set along with Hessian matrices inversion which scales poorly with dimension (i.e., in $\mathcal{O}(\dl^3)$). We summarize and compare the complexity results of the state-of-the-art algorithms in Table~\ref{Table: Comparison}.


	

A careful inspection on the above results reveals a gap in the iteration/sample complexity compared to \emph{single-level} stochastic optimization. For instance, an optimal stochastic gradient algorithm finds an $\epsilon$-stationary solution [cf.~Definition~\ref{Def: StationaryPt}] to $\min_{x} \mathbb{E}_\xi [ \ell(x; \xi ) ]$ in ${\cal O}(\epsilon^{-3/2})$ iterations \cite{Fang_Spider_NIPS_2018, Cutkosky_NIPS2019, Dinh_Arxiv_2019, Zhou_NIPS_2018_SNVRG}. For bilevel optimization, the fastest rate available is only ${\cal O}(\epsilon^{-2})$ to the best of the authors' knowledge. In comparison, the proposed algorithm achieves a rate of ${\cal O}(\epsilon^{-3/2})$.
During the preparation of the current paper, a preprint \cite{Guo_Arxiv_2021_SVRB} has appeared which extended \cite{Chen_Arxiv_2021STABLE}, and achieves an improved rate of ${\cal O}(\epsilon^{-3/2})$. We remark that the latter work follows a different design philosophy from ours and maybe less efficient; see the detailed discussion at the end of Sec.~\ref{sec: Algo}.


	
\textbf{Contributions.}~~
In this paper, we depart from the prior developments which focused on finding better inner solutions $y^*(x)$ to approximate $\hat{\nabla}_x \hat{\ell}( x; y ) \approx \nabla \ell(x)$. Our idea is to exploit the gradient estimates from prior iterations to improve the quality of the current gradient estimation. This leads to   \emph{momentum}-assisted stochastic gradient estimators for {\it both} $\nabla_y g(x,y)$ and $\nabla \ell(x)$ using similar techniques  in \cite{Cutkosky_NIPS2019, Dinh_Arxiv_2019} for single-level stochastic optimization. 
The resultant algorithm only requires $O(1)$ samples at each update, and updates $x$ and $y$ using step sizes of the same order, hence the name \underline{s}ingle-timescale do\underline{u}ble-momentum \underline{st}ochastic \underline{a}pprox\underline{i}matio\underline{n}(\aname) algorithm. Additionally, it is worth noting that our algorithm has a $\mathcal{O}(\dl^2)$ per iteration complexity, compared to the $\mathcal{O}(\dl^3)$ complexity of STABLE \cite{Chen_Arxiv_2021STABLE} and SVRB \cite{Guo_Arxiv_2021_SVRB}. That is, the \aname~algorithm is both {\it sample and computation} efficient. Our specific contributions are:\vspace{-.2cm}
\begin{itemize}[leftmargin=4mm, itemsep = -.05mm]
\item We propose the \aname~algorithm for bilevel problems which matches the best complexity bounds as the optimal SGD algorithms for single-level stochastic optimization. That is, it  requires ${\cal O}(\epsilon^{-3/2})$ [resp. ${\cal O}(\epsilon^{-1})$] samples to find an $\epsilon$-stationary solution for non-convex  (resp. strongly-convex) bilevel problems; see Table~\ref{Table: Comparison}. 
Furthermore, the algorithm utilizes a single-loop update with step sizes of the same order for both upper and lower level problems. 
Such complexity bounds match the optimal sample complexity of stochastic gradient algorithms for single-level problems.
\item By developing the Lipschitz continuous property of the (biased) stochastic estimates of $\nabla \ell(x)$, we show that obtaining a good estimate of $\nabla \ell(x)$ does not require explicit (sampled) Hessian inversion. This key result ensures that our algorithm depends favorably on the problem dimension.  
\item Comparing with prior works such as TTSA \cite{Hong_TTSA_Arxiv_2020},  BSA \cite{Ghadimi_BSA_Arxiv_2018},  STABLE \cite{Chen_Arxiv_2021STABLE} and SVRB \cite{Guo_Arxiv_2021_SVRB}, our analysis reveals that improving the gradient estimation quality for {\it both} $\nabla_y g(x,y)$ and $\nabla \ell(x)$ is the key to obtain a sample and computation efficient stochastic algorithm for bilevel optimization.
\end{itemize}
	
\textbf{Related works.}~~
The study of the bilevel problem \eqref{Eq: BilevelProb_Deterministic} can be traced to that of game theory \cite{stackelberg} and was formally introduced in \cite{Bracken73, Bracken74b,Bracken74}.  It is also related to the broader class of problems of Mathematical Programming with Equilibrium Constraints  \cite{luo_pang_ralph_1996}. 
Related algorithms include approximate descent \cite{Falk93, Vicente_QuadraticBilevel_1994}, and penalty-based methods \cite{White93}; see \cite{Colson_BilevelReview_AOR_2007} and \cite{Liu_Arxiv_2021Survey} for a comprehensive survey. 
	
In addition to the works cited in Table~\ref{Table: Comparison}, recent works on bilevel optimization have focused on algorithms with provable convergence rates. In \cite{Sabach_SAM_Siam_2017}, the authors proposed BigSAM algorithm for solving simple bilevel problems (with a single variable) with convex lower level problem. Subsequently, the works \cite{Liu_generic_Arxiv_2020, Li_ImprovedBilevel_Arxiv_2020} utilized BigSAM and developed algorithms for a general bilevel problem for the cases when the solution of the lower level problem is not a singleton. Note that all the aforementioned works  \cite{Sabach_SAM_Siam_2017,Liu_generic_Arxiv_2020, Li_ImprovedBilevel_Arxiv_2020} assumed the upper level problem to be strongly-convex with convex lower level problem. In a separate line of work, backpropagation based algorithms have been proposed to approximately solve bilevel problems \cite{Franceschi_ICML_2017, Shaban_TruncatedBackProp_2019, Grazzi_StochasticHypergradients_2020, Grazzi_ComplexityHypergrad_2020}. However, the major focus of these works was to develop efficient gradient estimators rather than on developing efficient optimization algorithms. 
	
\textbf{Notation.}~~ For any $x \in \mathbb{R}^d$, we denote $\| x \|$ as the standard Euclidean norm; as for $X \in \mathbb{R}^{n \times d}$, $\| X \|$ is induced by the Euclidean norm. For a multivariate function $f(x,y)$, the notation $\nabla_x f(x,y)$ [resp.~$\nabla_y f(x,y)$] refers to the partial gradient taken with respect to (w.r.t.) $x$ [resp.~$y$]. For some $\mu > 0$, a function $f(x,y)$ is said to be $\mu$-strongly-convex in $x$ if $f(x,y) - \frac{\mu}{2} \| x \|^2$ is convex in $x$. For some $L > 0$, the map ${\cal A} : \mathbb{R}^d \rightarrow \mathbb{R}^m$ is said to be $L$-Lipschitz continuous if $\| {\cal A}(x) - {\cal A}(y) \| \leq L \| x - y \|$ for any $x,y \in \mathbb{R}^d$. A function $f: \mathbb{R}^d \rightarrow \mathbb{R}$ is said to be $L$-smooth if its gradient is $L$-Lipschitz continuous. Uniform distribution over a discrete set $\{1, \ldots, T\}$ is represented by $\mathcal{U}\{1, \ldots, T \}$.

Finally, we state the following definitions for the optimality criteria of \eqref{Eq: BilevelProb_Deterministic}.  
\begin{defn}[{$\epsilon$-Stationary Point}]
\label{Def: StationaryPt}
A point $x$ is called $\epsilon$-stationary if $\| \nabla \ell(x) \|^2 \leq \epsilon$. A stochastic algorithm is said to achieve an $\epsilon$-stationary point in $t$ iterations if $\mathbb{E}[\| \nabla \ell(x_t) \|^2] \leq \epsilon$, where the expectation is over the stochasticity of the algorithm until time instant $t$.
\end{defn}
	
\begin{defn}[{$\epsilon$-Optimal Point}]
\label{Def: OptimalPt}
A point $x$ is called $\epsilon$-optimal if $\ell(x) - \ell^\ast  \leq \epsilon$, where $\ell^\ast \coloneqq \min_{x \in \mathbb{R}^{\du} } \ell (x)$. A stochastic algorithm is said to achieve an $\epsilon$-optimal point in $t$ iterations if $\mathbb{E}[ \ell(x_t)  - \ell^\ast ] \leq \epsilon$, where the expectation is over the stochasticity of the algorithm until time instant $t$.\vspace{-.1cm}
\end{defn}
	
\section{Preliminaries} \label{Sec: ProblemandAssumptions}\vspace{-.1cm}
We discuss the assumptions on \eqref{Eq: BilevelProb_Deterministic} to specify the problem class of interest. We also preface the proposed algorithm by describing a practical procedure for estimating the stochastic gradients.  
\begin{assump}[Upper Level Function] \label{Assump: OuterFunction}
$f(x,y)$ satisfies the following conditions:
\begin{enumerate}[label=(\roman*), itemsep=-.2mm, leftmargin=7mm, topsep = -2.5mm]
\item $\nabla_x f(x , y)$ and $\nabla_y f(x,y)$ are Lipschitz continuous w.r.t. $(x,y) \in \mathbb{R}^{\du} \times \mathbb{R}^{\dl}$, and with constants $L_{f_x} \geq 0$ and $L_{f_y} \geq 0$, respectively. 
\item For any $(x,y) \in \mathbb{R}^{\du} \times \mathbb{R}^{\dl}$, we have $\|\nabla_y f(x,y)\| \leq C_{f_y}$, for some $C_{f_y} \geq 0$.
\end{enumerate}
\end{assump}
	
\begin{assump}[Lower level Function]
\label{Assump: InnerFunction} $g(x,y)$ satisfies the following conditions: 
\begin{enumerate}[label=(\roman*), itemsep=-1mm, leftmargin=7mm, topsep = -2.5mm]
\item For any $x \in \mathbb{R}^{\du}$ and $y \in \mathbb{R}^{\dl}$, $g(x,y)$ is twice continuously differentiable in $(x,y)$.
\item $\nabla_y g(x, y)$ is Lipschitz continuous w.r.t. $(x,y) \in \mathbb{R}^{\du} \times \mathbb{R}^{\dl}$, and with constant $L_g \geq 0$.
\item For any $x \in \mathbb{R}^{\du}$, $g(x, \cdot)$ is $\mu_g$-strongly-convex in $y$ for some $\mu_g > 0$.
\item $\nabla^2_{xy} g(x, y)$ and $\nabla^2_{yy} g(x , y)$ are Lipschitz continuous w.r.t. $(x,y) \in \mathbb{R}^{\du} \times \mathbb{R}^{\dl}$, and with constants $L_{g_{xy}} \geq 0$ and $L_{g_{yy}} \geq 0$, respectively.
\item For any $(x,y) \in \mathbb{R}^{\du} \times \mathbb{R}^{\dl}$, we have $\|\nabla^2_{xy} g(x,y)\|^2 \leq C_{g_{xy}}$ for some $C_{g_{xy}} > 0$.
\end{enumerate}
\end{assump}
	
\begin{assump} [Stochastic Functions] \label{Assump: StocFn}
\label{Assump: Stochastic} Assumptions \ref{Assump: OuterFunction} and \ref{Assump: InnerFunction} hold for $f(x,y;\xi)$ and $g(x,y;\zeta)$, for all $\xi\in {\rm supp}(\pi_f)$ and $\zeta\in {\rm supp}(\pi_g)$ where ${\rm supp}(\pi)$ is the support of $\pi$.
\end{assump}
These assumptions are standard in the analysis of bilevel optimization \cite{Ghadimi_BSA_Arxiv_2018}. For example,  they are satisfied by a range of applications such as the meta learning problem \eqref{Eq: Meta_Leaning}, data hypercleaning problem \eqref{eq:clean} with linear classifier. 
Notice that under these assumptions, the gradient $\nabla \ell(\cdot)$ is well-defined. By utilizing Assumption \ref{Assump: InnerFunction}--(i) and (ii) along with the implicit function theorem \cite{Rudin_Book}, it is easy to show  that for a given $\bar{x}\in\mathbb{R}^{\du}$, the following holds \cite[Lemma 2.1]{Ghadimi_BSA_Arxiv_2018}:
\begin{align}
	\label{Eq: True_Grad_Deterministic}
	\nabla \ell( \bar{x}) & =  \nabla_x f(\bar{x}, y^\ast(\bar{x}))   - \nabla^2_{xy} g(\bar{x} , y^\ast(\bar{x}))    [\nabla^2_{yy} g(\bar{x}, y^\ast(\bar{x}))]^{-1} 	\nabla_y f(\bar{x}, y^\ast(\bar{x})).
\end{align}
Obtaining $y^\ast(x)$ in closed-form is usually a challenging task, so it is natural to use the following gradient surrogate. At any $(\bar{x}, \bar{y}) \in \mathbb{R}^{\du\times \dl}$, define:
\begin{align}
	\label{Eq: Estimated_Grad_Deterministic}
	\bar{\nabla} f(\bar{x} , \bar{y}) & = \nabla_x f(\bar{x}, \bar{y})  - \nabla^2_{xy} g(\bar{x} , \bar{y})  [\nabla^2_{yy} g(\bar{x}, \bar{y})]^{-1} \nabla_y f(\bar{x}, \bar{y}).
\end{align}
	
Evaluating \eqref{Eq: Estimated_Grad_Deterministic} requires computing the exact gradients and Hessian inverse which can be non-trivial. Below, we describe a practical procedure from \cite{Ghadimi_BSA_Arxiv_2018} to generate a \emph{biased} estimate of $\bar{\nabla} f(\bar{x} , \bar{y})$.
	
\textbf{Stochastic gradient estimator for $\nabla \ell(x)$.}~~
The estimator requires a parameter $K \in \mathbb{N}$ and is based on a collection of $K+3$ independent samples $\bar{\xi} := \{ \xi , \zeta^{(0)}, ..., \zeta^{(K)}, {\sf k}(K) \}$, where $\xi \sim \mu$, $\zeta^{(i)} \sim \pi_g$, $i=0,...,K$, and ${\sf k}(K) \sim {\cal U}\{ 0,..., K-1\}$. We set {\small
\begin{align}
	\label{Eq: UpperLevel_StochasticGrad}
	\bar{\nabla} f(x, y ; \bar{\xi}) = \nabla_x f(x,y ;  \xi) - \frac{K}{L_g} \nabla^2_{xy} g(x,y ; \zeta^{(0)}) \prod_{i = 1}^{{\sf k}(K)} \bigg( I - \frac{\nabla^2_{yy} g(x, y ; \zeta^{(i)})}{L_g}  \bigg) \nabla_y f(x, y ;  \xi),  
\end{align}}%
where we have used the convention $\prod_{i = 1}^j A_i = I$ if $j = 0$. It has been shown in \cite{Ghadimi_BSA_Arxiv_2018,Hong_TTSA_Arxiv_2020} that the bias with the gradient estimator \eqref{Eq: UpperLevel_StochasticGrad} decays exponentially fast with $K$, as summarized below:
\begin{lem}\label{lm:biased}
Under Assumptions~\ref{Assump: OuterFunction}, \ref{Assump: InnerFunction}. For any $K \geq 1$, the gradient estimator in \eqref{Eq: UpperLevel_StochasticGrad} satisfies
\begin{equation} 
\| \bar{\nabla} f(x,y) - \mathbb{E}_{\bar{\xi}} [ \bar{\nabla} f(x,y ; \bar{\xi} )] \| \leq \frac{ C_{g_{xy}} C_{f_y} }{ \mu_g } \left( 1 - \frac{\mu_g}{L_g} \right)^K,~~\forall~(x,y) \in \mathbb{R}^{\du} \times \mathbb{R}^{\dl}.
\end{equation}
\end{lem}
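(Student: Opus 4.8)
The plan is to compute the expectation $\mathbb{E}_{\bar\xi}[\bar\nabla f(x,y;\bar\xi)]$ in closed form, recognize it as a \emph{truncated Neumann series} approximation of the Hessian inverse appearing in \eqref{Eq: Estimated_Grad_Deterministic}, and then observe that the bias is exactly the discarded tail of that series, which I bound geometrically.

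First I would exploit the independence of the samples $\{\xi,\zeta^{(0)},\dots,\zeta^{(K)},\mathsf{k}(K)\}$. Since the stochastic gradients and Hessians are unbiased, $\mathbb{E}[\nabla_x f(x,y;\xi)] = \nabla_x f(x,y)$, so the first terms of \eqref{Eq: Estimated_Grad_Deterministic} and \eqref{Eq: UpperLevel_StochasticGrad} cancel and only the second term contributes to the bias. In that term, $\xi$ enters only through $\nabla_y f(x,y;\xi)$ and is independent of everything else, so it factors out as $\nabla_y f(x,y)$; likewise $\zeta^{(0)}$ is independent of the remaining samples, so $\nabla^2_{xy}g(x,y;\zeta^{(0)})$ factors out as $\nabla^2_{xy}g(x,y)$. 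It then remains to evaluate the expectation of the random-length matrix product. Writing $H := \nabla^2_{yy}g(x,y)$, I would condition on $\mathsf{k}(K)=k$ and use that $\zeta^{(1)},\dots,\zeta^{(K)}$ are i.i.d.\ and independent of $\mathsf{k}(K)$, so the expectation of the product equals the product of expectations, namely $\big(I - H/L_g\big)^k$. Averaging over $\mathsf{k}(K)\sim\mathcal{U}\{0,\dots,K-1\}$ and recombining the prefactor $K/L_g$ with the $1/K$ from the uniform average yields
\[
\mathbb{E}_{\bar\xi}[\bar\nabla f(x,y;\bar\xi)] = \nabla_x f(x,y) - \nabla^2_{xy}g(x,y)\Big(\tfrac{1}{L_g}\sum_{k=0}^{K-1}\big(I - H/L_g\big)^k\Big)\nabla_y f(x,y).
\]

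Next I would invoke the Neumann series identity. Assumptions~\ref{Assump: InnerFunction}(ii)--(iii) give $\mu_g I \preceq H \preceq L_g I$, hence $0 \preceq I - H/L_g \preceq (1-\mu_g/L_g)I$ and $\|I - H/L_g\| \le 1-\mu_g/L_g < 1$. Consequently $H^{-1} = \tfrac{1}{L_g}\sum_{k=0}^{\infty}(I-H/L_g)^k$, and subtracting the partial sum above from this expansion shows that the bias is $-\nabla^2_{xy}g(x,y)\big(\tfrac{1}{L_g}\sum_{k=K}^{\infty}(I-H/L_g)^k\big)\nabla_y f(x,y)$, i.e.\ exactly the discarded tail of the series. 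Finally I would bound the operator norm of the tail by the geometric sum $\tfrac{1}{L_g}\sum_{k=K}^{\infty}(1-\mu_g/L_g)^k = \tfrac{1}{\mu_g}(1-\mu_g/L_g)^K$, and multiply by the uniform bound on $\|\nabla^2_{xy}g(x,y)\|$ (Assumption~\ref{Assump: InnerFunction}(v)) and $\|\nabla_y f(x,y)\|\le C_{f_y}$ (Assumption~\ref{Assump: OuterFunction}(ii)) to obtain the claimed bound.

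The routine part is the Neumann tail estimate; the only step demanding care is the closed-form expectation in the second paragraph, where one must correctly combine the independence of the $\zeta^{(i)}$'s (to turn the expected product of random matrices into a product of expectations, giving $(I-H/L_g)^k$) with the averaging over the random truncation length $\mathsf{k}(K)$. Getting this collapse right --- and checking that the prefactor $K/L_g$ cancels the $1/K$ from the uniform distribution --- is what makes the estimator an (appropriately) unbiased draw of the truncated Neumann series, and is the crux of the argument.
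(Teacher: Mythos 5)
Your proposal is correct, and it is essentially the canonical argument for this bound. One thing to note about the comparison: the paper never proves this lemma itself --- in Appendix~\ref{Sec: Appendix_PreliminaryLemmas} it restates it as Lemma~\ref{Lem: Bias} and imports it wholesale from the TTSA paper (Lemma~11 of \cite{Hong_TTSA_Arxiv_2020}) --- so what you have written is the proof that the paper defers to the literature, and it matches the mechanism of the cited result exactly. Your two key steps are the right ones: (i) independence of $\xi$, $\zeta^{(0)},\dots,\zeta^{(K)}$ and ${\sf k}(K)$ lets the expectation collapse, term by term, into $\nabla_x f(x,y) - \nabla^2_{xy} g(x,y)\bigl(\tfrac{1}{L_g}\sum_{k=0}^{K-1}(I - H/L_g)^k\bigr)\nabla_y f(x,y)$ with $H = \nabla^2_{yy} g(x,y)$ (note the $K/L_g$ prefactor cancelling the $1/K$ of the uniform law, as you observe); and (ii) since $\mu_g I \preceq H \preceq L_g I$ by Assumption~\ref{Assump: InnerFunction}(ii)--(iii), the bias is the Neumann tail $\tfrac{1}{L_g}\sum_{k\geq K}(I-H/L_g)^k$ sandwiched between the two bounded factors, giving $\tfrac{1}{\mu_g}(1-\mu_g/L_g)^K$ times those bounds. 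The only blemish, which you inherit from the paper rather than introduce: Assumption~\ref{Assump: InnerFunction}(v) literally bounds $\|\nabla^2_{xy} g(x,y)\|^2 \leq C_{g_{xy}}$ (the square), so a pedantic reading would give $\sqrt{C_{g_{xy}}}\,C_{f_y}/\mu_g$ in the bound; the lemma statement itself treats $C_{g_{xy}}$ as a bound on the norm, and your usage is consistent with that intended reading.
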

The detailed statement of the above lemma is included in Appendix~\ref{Sec: Appendix_PreliminaryLemmas}.
We remark that each computation of $\bar{\nabla} f(x, y ; \bar{\xi})$ requires at most $K$ Hessian-vector products, and later we will show that setting $K=\mathcal{O}(\log(T))$ is necessary for the proposed algorithm. Since $\nabla^2_{yy}g(x,y;\zeta)$ is of size $\dl \times \dl$, the total complexity of this step is  $\mathcal{O}(\log(T) \dl^2 )$. On the contrary, STABLE \cite{Chen_Arxiv_2021STABLE} and SVRB \cite{Guo_Arxiv_2021_SVRB} require  $\mathcal{O}( \dl^3)$ to estimate the Hessian inverse, which is more computationally expensive when $\dl \gg 1$. Indeed, it has been explicitly mentioned in \cite{Chen_Arxiv_2021STABLE} that ``\emph{our algorithm (STABLE) is preferable in the regime where the sampling is
more costly than computation or the dimension $d$ is relatively small}''. 
	
Notice that \eqref{Eq: UpperLevel_StochasticGrad} is not the only option for estimating the gradient surrogate $\bar{\nabla} f(x,y)$. For ease of presentation, below we abstract out the conditions on the stochastic estimates of $\nabla_y g$, $\bar{\nabla} f$ required by our analysis as the following assumption:  
\begin{assump}[Stochastic Gradients] For any $(x,y) \in \mathbb{R}^{\du} \times \mathbb{R}^{\dl}$, there exists constants $\sigma_f, \sigma_g \geq 0$ such that the estimates $\nabla_y g(x , y ; \zeta)$, $\bar{\nabla} f (x, y ; \bar{\xi})$ satisfy:
\label{Assump: Stochastic Grad}
\begin{enumerate}[leftmargin=6mm,label=(\roman*)]
\item The gradient estimate of the upper level objective satisfies:
\begin{align}
& \mathbb{E}_{\bar{\xi}}\big[\| \bar{\nabla} f (x, y ; \bar{\xi}) - \bar{\nabla} f (x, y ) - B(x,y) \|^2 \big] \leq \sigma_f^2,
\end{align}
where $B(x,y) = \mathbb{E}_{\bar{\xi}} [\bar{\nabla} f (x, y ; \bar{\xi})] - \bar{\nabla} f (x, y )$ is the bias in estimating $\bar{\nabla} f(x,y)$.
\item The gradient estimate of the lower level objective satisfies
\begin{align}
& \mathbb{E}_{\zeta} \big[ \|\nabla_y g(x , y ; \zeta) - \nabla_y g(x , y)\|^2  \big]   \leq \sigma_g^2.
\end{align}
\end{enumerate}
\end{assump}
As observed from Lemma~\ref{lm:biased}, the gradient estimator \eqref{Eq: UpperLevel_StochasticGrad} satisfies Assumption \ref{Assump: Stochastic Grad}(i).
	
Lastly, the approximate gradient defined in \eqref{Eq: Estimated_Grad_Deterministic}, the true gradient \eqref{Eq: True_Grad_Deterministic}, as well as the optimal solution of the lower level problem are Lipschitz continuous, as proven below:
\begin{lem}{\cite[Lemma 2.2]{Ghadimi_BSA_Arxiv_2018}}\label{Lem: Lip_Ghadhimi}
Under Assumptions \ref{Assump: OuterFunction}, \ref{Assump: InnerFunction} and \ref{Assump: StocFn}, we have
\begin{equation} \label{eq:lip}
\begin{array}{c}
\| \bar{\nabla}f(x , y) - \nabla \ell(x) \| \leq L \|y^\ast(x) - y \|, 
\quad \| y^\ast(x_1) - y^\ast(x_2)\| \leq L_y \| x_1 - x_2\|, \\[.2cm]
\| \nabla \ell(x_1) - \nabla \ell(x_2)\| \leq L_f \|x_1 - x_2 \|, 
\end{array}
\end{equation}
for all $x,x_1,x_2 \in \mathbb{R}^{\du}$ and $y \in \mathbb{R}^{\dl}$. The above Lipschitz constants are defined as:
\begin{align}
& L = L_{f_x}  +  \frac{L_{f_y} C_{g_{xy}}}{\mu_g} + C_{f_y}  \bigg( \frac{L_{g_{xy}}}{\mu_g}   +  \frac{L_{g_{yy}} C_{g_{xy}}}{\mu_g^2}\bigg), \quad L_f = L + \frac{ L C_{g_{xy}} }{ \mu_g }, \quad L_y  = \frac{C_g}{\mu_g}.
\end{align}
\end{lem}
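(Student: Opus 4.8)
The plan is to build everything on the representations \eqref{Eq: True_Grad_Deterministic} and \eqref{Eq: Estimated_Grad_Deterministic}, together with the identity $\nabla\ell(x) = \bar\nabla f(x, y^\ast(x))$ that results from setting $\bar y = y^\ast(\bar x)$ in \eqref{Eq: Estimated_Grad_Deterministic}. I would prove the three bounds in the order (middle)$\to$(first)$\to$(last): the first inequality is just the special case $y' = y^\ast(x)$ of a Lipschitz-in-$y$ estimate for $\bar\nabla f$, and the last follows by combining that estimate with a symmetric Lipschitz-in-$x$ estimate and with the Lipschitz continuity of $y^\ast$.

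For $\|y^\ast(x_1)-y^\ast(x_2)\|\le L_y\|x_1-x_2\|$, I would apply the implicit function theorem to the stationarity condition $\nabla_y g(x,y^\ast(x))=0$ of the $\mu_g$-strongly convex lower-level problem. Differentiating in $x$ gives $\nabla^2_{yx}g(x,y^\ast(x)) + \nabla^2_{yy}g(x,y^\ast(x))\,\nabla y^\ast(x)=0$, hence $\nabla y^\ast(x) = -[\nabla^2_{yy}g]^{-1}\nabla^2_{yx}g$. Bounding the two factors by $\|[\nabla^2_{yy}g]^{-1}\|\le 1/\mu_g$ (Assumption~\ref{Assump: InnerFunction}(iii)) and by the bound on $\|\nabla^2_{xy}g\|$ (Assumption~\ref{Assump: InnerFunction}(v)) yields $\sup_x\|\nabla y^\ast(x)\|\le L_y$, and integrating along the segment joining $x_1$ and $x_2$ gives the claim.

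The core step is the second inequality. Since $\nabla\ell(x)=\bar\nabla f(x,y^\ast(x))$, it suffices to show $\bar\nabla f(x,\cdot)$ is $L$-Lipschitz in its second argument. I would decompose $\bar\nabla f(x,y)-\bar\nabla f(x,y')$ into the first-order part $\nabla_x f(x,y)-\nabla_x f(x,y')$, controlled by $L_{f_x}$ via Assumption~\ref{Assump: OuterFunction}(i), and the difference of the triple products $\nabla^2_{xy}g\,[\nabla^2_{yy}g]^{-1}\nabla_y f$ evaluated at $y$ versus $y'$. I would telescope the latter by inserting two intermediate terms so that exactly one factor changes at a time; the three factors are bounded using Assumptions~\ref{Assump: OuterFunction}(ii), \ref{Assump: InnerFunction}(iii), and~\ref{Assump: InnerFunction}(v), while their increments are bounded by $L_{g_{xy}}\|y-y'\|$, $(L_{g_{yy}}/\mu_g^2)\|y-y'\|$, and $L_{f_y}\|y-y'\|$. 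The increment of the inverse Hessian is the delicate piece: I would use the resolvent identity $M^{-1}-N^{-1}=M^{-1}(N-M)N^{-1}$ with $M=\nabla^2_{yy}g(x,y)$ and $N=\nabla^2_{yy}g(x,y')$, so that Lipschitz continuity of $\nabla^2_{yy}g$ (Assumption~\ref{Assump: InnerFunction}(iv)) and two applications of the $1/\mu_g$ bound give $\|M^{-1}-N^{-1}\|\le (L_{g_{yy}}/\mu_g^2)\|y-y'\|$. Collecting the three telescoped terms and the first-order term produces exactly the stated constant $L$, and the first inequality follows by taking $y'=y^\ast(x)$.

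Finally, for $\|\nabla\ell(x_1)-\nabla\ell(x_2)\|\le L_f\|x_1-x_2\|$, I would split $\nabla\ell(x_1)-\nabla\ell(x_2)=\big[\bar\nabla f(x_1,y^\ast(x_1))-\bar\nabla f(x_2,y^\ast(x_1))\big]+\big[\bar\nabla f(x_2,y^\ast(x_1))-\bar\nabla f(x_2,y^\ast(x_2))\big]$. The second bracket is at most $L\|y^\ast(x_1)-y^\ast(x_2)\|\le L L_y\|x_1-x_2\|$ by the first inequality followed by the $y^\ast$-Lipschitz bound. The first bracket is handled by repeating the telescoping argument with $y$ held fixed and $x$ varying; because the relevant constants in Assumptions~\ref{Assump: OuterFunction}(i) and~\ref{Assump: InnerFunction}(iv) are joint Lipschitz constants in $(x,y)$, the same bound $L\|x_1-x_2\|$ is recovered. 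Adding the two contributions gives $L_f = L + L L_y$, matching the stated constant. I expect the main obstacle to be not the implicit-function-theorem step but the bookkeeping in the resolvent-based decomposition of the triple matrix–vector product, where one must track which factor is being perturbed and keep every operator-norm bound consistent in order to recover the precise constants.
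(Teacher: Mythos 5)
The paper never proves this lemma itself---it is imported verbatim by citation from \cite[Lemma 2.2]{Ghadimi_BSA_Arxiv_2018}---and your reconstruction is correct and follows exactly the standard argument of that reference: implicit differentiation of $\nabla_y g(x,y^\ast(x))=0$ together with $\|[\nabla^2_{yy}g]^{-1}\|\le 1/\mu_g$ for the $L_y$ bound, and a one-factor-at-a-time telescoping of $\nabla^2_{xy}g\,[\nabla^2_{yy}g]^{-1}\nabla_y f$ with the resolvent identity $M^{-1}-N^{-1}=M^{-1}(N-M)N^{-1}$ for the inverse-Hessian increment, which reproduces the stated constants $L$ and $L_f = L(1+L_y)$. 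The same telescoping-plus-resolvent bookkeeping is precisely what the paper itself deploys in Appendix~\ref{Sec: Appendix_PreliminaryLemmas} to prove the stochastic analogue (Lemma~\ref{Lem: Lip_GradEst_Appendix}), so your route is fully consistent with the paper's own toolkit.
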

The first result in \eqref{eq:lip} reveals that $\bar{\nabla} f(x,y)$ approximates $\nabla \ell(x)$ when $y \approx y^\ast(x)$. This suggests that a \emph{double-loop} algorithm which solves the strongly-convex lower level problem to sufficient accuracy can be applied to tackle \eqref{Eq: BilevelProb_Deterministic}. Such approach has been pursued in \cite{Ghadimi_BSA_Arxiv_2018,Ji_ProvablyFastBilevel_Arxiv_2020}. Next, we propose an algorithm which rely on \emph{single-loop} updates with improved sample efficiency. 


\section{The proposed \aname~algorithm}\label{sec: Algo}
Equipped with a practical stochastic gradient estimator for $\nabla \ell(x)$ [cf.~\eqref{Eq: UpperLevel_StochasticGrad}], our next endeavor is to develop a \emph{single-loop} algorithm to tackle \eqref{Eq: BilevelProb_Deterministic} through drawing $\mathcal{O}(1)$ samples for upper and lower level problems at each iteration. Our main idea is to adopt the recursive momentum techniques developed in \cite{Cutkosky_NIPS2019,Dinh_Arxiv_2019}. Notice that these works utilize \emph{unbiased} stochastic gradients evaluated at consecutive iterates to construct a variance reduced gradient estimate for single-level stochastic optimization. 

In the context of bilevel stochastic optimization \eqref{Eq: BilevelProb_Deterministic}, a few key challenges are in order:
\begin{itemize}[leftmargin=4mm, itemsep=-.5mm]
    \item Recall from Lemma~\ref{lm:biased} that obtaining an unbiased estimator for the outer gradient $\nabla \ell(x)$ requires using $K \rightarrow \infty$ samples in \eqref{Eq: UpperLevel_StochasticGrad}, this calls for the new techniques to control the bias arising from approximating $\nabla \ell(x)$. 
    \item 
    The gradient estimator \eqref{Eq: UpperLevel_StochasticGrad} has a more complicated structure than a plain gradient estimator, as it involves up to three different stochastic vectors/matrices related to $\nabla_x f(x,y)$, $\nabla_y f(x,y)$, $\nabla_{xy}g(x,y)$, and one stochastic inversion that is related to $[\nabla_{yy}g(x,y)]^{-1}$. It is not clear which are the most important objects for which variance reduction shall be applied.
\end{itemize}

Our key innovation is to develop a useful estimate of $\bar{\nabla} f(x,y)$ by using a novel {\it double-momentum} technique. First, we build a recursive momentum estimator for $\nabla_y g(x,y)$, based upon  which the variable $y$ gets updated. Then, with such a "stabilized" inner iteration, we compute an estimate of $\bar{\nabla} f(x,y)$ as given in \eqref{Eq: UpperLevel_StochasticGrad},  by using the four stochastic vectors/matrices mentioned above but without performing any variance reduction.   Such a stochastic estimator will then be used to construct a recursive momentum estimator for $\bar{\nabla}f(x,y)$. The intuition is that as long as $y$ is {\it accurate enough}, then the stochastic terms in \eqref{Eq: UpperLevel_StochasticGrad} are also accurate enough, so they can be used to construct the estimator for the outer gradient. Our approach only tracks two vector estimators, while still being able to leverage the low-complexity sample-based Hessian inversion as given  in \eqref{Eq: UpperLevel_StochasticGrad}.

The \aname~algorithm is summarized in Algorithm~\ref{Algo: Accelerated_STSA}. Define $\eta^g_t \in [0,1]$, $\eta^f_t \in [0,1]$. For the lower level problem involving $y$, it utilizes the following momentum-assisted gradient estimator, $h_t^g \in \mathbb{R}^{\dl}$, defined recursively as 
		\begin{align} \label{Eq: HybridGradEstimate_Inner}
	h_{t}^{g}  =  \eta_t^g \nabla_y g(x_t , y_t; \zeta_t) +  (1 - \eta_{t}^g) \big(h_{t - 1}^{g} + \nabla_y g(x_t , y_t; \zeta_t) -   \nabla_y g(x_{t - 1} , y_{t - 1}; \zeta_t)  \big);
	\end{align}
	For the upper level problem involving $x$, we utilize a similar estimate, $h_t^f \in \mathbb{R}^{\du}$, defined as
	\begin{align} \label{Eq: HybridGradEstimate_Outer}
	h_{t}^{f}  =  \eta_t^f \bar{\nabla} f(x_t , y_t ; \bar{\xi}_t) +  (1 - \eta_{t}^f)  \big(h_{t - 1}^{f} + \bar{\nabla} f(x_t, y_t ; \bar{\xi}_t) -   \bar{\nabla} f(x_{t-1}, y_{t - 1} ; \bar{\xi}_t)  \big).
	\end{align}
	The gradient estimators $h_t^g$ and $h_t^f$ are computed from the current and past gradient estimates $\nabla_y g(x_t, y_t; \zeta_t)$, $\nabla_y g(x_{t-1}, y_{t-1}; \zeta_t)$ and $\bar{\nabla} f(x_t, y_t ; \bar{\xi}_t)$, $\bar{\nabla} f(x_{t-1}, y_{t-1} ; \bar{\xi}_t)$. Note that the stochastic gradients at two consecutive iterates are computed using the same sample sets $\zeta_t$ for $h_t^g$ and $\bar{\xi}_t$ for $h_t^f$. 
	
Both $x$ and $y$-update steps mark a major departure of the \aname~algorithm from existing algorithms on bilevel optimization  \cite{Ghadimi_BSA_Arxiv_2018,Hong_TTSA_Arxiv_2020,Ji_ProvablyFastBilevel_Arxiv_2020}. The latter works apply the direct gradient estimator $\bar{\nabla} f(x_t , y_{t+1} ; \bar{\xi}_t)$ [cf.~\eqref{Eq: UpperLevel_StochasticGrad}] to serve as an estimate to $\bar{\nabla} f(x,y)$ [and subsequently $\nabla \ell(x)$]. 
To guarantee convergence, these works focused on improving the \emph{tracking performance} of $y_{t+1} \approx y^\star(x_t)$ by employing double-loop updates, e.g., by repeatedly applying SG step multiple times for the inner problem; or a sophisticated two-timescale design for the step sizes, e.g., by setting $\beta_t / \alpha_t \rightarrow \infty$.

A recent preprint \cite{Guo_Arxiv_2021_SVRB} suggested the SVRB algorithm which applies a similar recursive momentum technique as \aname. However, the SVRB algorithm is different from \aname~as the momentum estimator is applied exhaustively to \emph{all} the individual random quantities involved in \eqref{Eq: UpperLevel_StochasticGrad} and requires a Hessian projection step. As a result, the SVRB algorithm entails a high complexity in storage and computation as the latter has to store matrix variables of size $\dl \times \dl$ and computes a matrix inverse for each iteration. In comparison, the \aname~algorithm only requires storing the gradient estimators $h_t^g, h_t^f$ of size $\dl, \du$, respectively, and the computation complexity is only ${\cal O}(\dl^2 K )$ for each iteration.

\begin{algorithm}[t]
\caption{The Proposed \aname~Algorithm} \label{Algo: Accelerated_STSA}
\begin{algorithmic}[1]
\State{\textbf{Input}: Parameters: $\{\beta_t\}_{t=0}^{T-1}$,  $\{\alpha_t\}_{t=0}^{T - 1}$, $\{\eta_t^f\}_{t=0}^{T-1}$, and $\{\eta_t^g\}_{t=0}^{T-1}$ with $\eta_0^f = \eta_0^g  =1$}
\State{\textbf{Initialize}: $x_0$, $y_0$; set $x_{-1} =  y_{-1} = h_{-1}^f = h_{-1}^g = 0$}
\For{$t = 0$ to $T - 1$}
	\State{\hspace{-.1cm}(\texttt{$y$-update}) Compute the gradient estimator $h^g_t$ by \eqref{Eq: HybridGradEstimate_Inner} and set  $ y_{t+1} =  y_t - \beta_{t} h_{t}^{g}$.}
	\State{\hspace{-.1cm}(\texttt{$x$-update}) Compute the gradient estimator $h^f_t$ by \eqref{Eq: HybridGradEstimate_Outer} and set $ x_{t+1} =  x_t - \alpha_{t} h_{t}^{f}$.}
\EndFor
\State{{\bf Return:} $x_{a(T)}$ where $a(T) \sim {\cal U}\{1,...,T\}$.}
\end{algorithmic}
\end{algorithm}

\subsection{Convergence analysis}
In the following, we present the convergence analysis for the \aname~algorithm when $\ell (\cdot)$ is a smooth function [cf.~consequence of Assumptions \ref{Assump: OuterFunction}, \ref{Assump: InnerFunction} and \ref{Assump: StocFn}]. 
Before proceeding to the main results, we present a lemma about the Lipschitzness of the gradient estimate $\bar{\nabla} f(x,y; \bar{\xi} )$ given in \eqref{Eq: UpperLevel_StochasticGrad}:
\begin{lem}
	\label{Lem: Lip_GradEst}
	Under Assumptions \ref{Assump: OuterFunction}, \ref{Assump: InnerFunction} and \ref{Assump: StocFn}, we have for any $(x_1,y_1), (x_2,y_2) \in \mathbb{R}^{\du} \times \mathbb{R}^{\dl}$, 
	\begin{align*}
	\mathbb{E}_{\bar{\xi}} \| \bar{\nabla} f(x_1, y_1 ; \bar{\xi}) - \bar{\nabla} f(x_2, y_2 ; \bar{\xi}) \| \leq L_K^2 \big\{ \|x_1 - x_2\| + \| y_1 - y_2 \| \big\}^2,
	\end{align*}
where 
\begin{align} 
L_K = \sqrt{ 2 L_{f_x}^2 + \frac{ 6 C_{g_{xy}}^2 L_{f_y}^2 K}{2 \mu_g L_g - \mu_g^2} +   \frac{ 6 C_{f_y}^2 L_{g_{xy}}^2 K}{2 \mu_g L_g - \mu_g^2}   + \frac{6 C_{g_{xy}}^2 C_{f_y}^2 L_{g_{yy}}^2 K^3 }{(L_g - \mu_g)^2 (2 \mu_g L_g - \mu_g^2)} }, \label{eq:LKDef}
\end{align}
and $K$ is the number of samples required to construct the stochastic gradient estimate given in \eqref{Eq: UpperLevel_StochasticGrad}.
\end{lem}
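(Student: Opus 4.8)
The plan is to prove the mean‑square smoothness bound
\[
\mathbb{E}_{\bar{\xi}} \big\| \bar{\nabla} f(x_1, y_1 ; \bar{\xi}) - \bar{\nabla} f(x_2, y_2 ; \bar{\xi}) \big\|^2 \leq L_K^2 \big( \|x_1 - x_2\| + \| y_1 - y_2 \| \big)^2 ,
\]
where I note that the norm on the left should be squared for the two sides to be dimensionally consistent with the square‑root form of $L_K$ in \eqref{eq:LKDef}. The first observation is that, under Assumption~\ref{Assump: StocFn}, Assumptions~\ref{Assump: OuterFunction} and \ref{Assump: InnerFunction} hold \emph{sample‑wise}, i.e.\ for every fixed realization of $\xi,\zeta^{(0)},\dots,\zeta^{(K)}$ in the support. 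Hence all Lipschitz and boundedness constants apply directly to the stochastic quantities in \eqref{Eq: UpperLevel_StochasticGrad}, and the only genuine randomness that must be averaged is the index $k := {\sf k}(K) \sim \mathcal{U}\{0,\dots,K-1\}$. Since the two evaluations share the same $\bar{\xi}$, I would control the difference pointwise in $\bar{\xi}$ and take $\mathbb{E}_k$ at the very end. Writing the estimator schematically as $\bar{\nabla} f = a - M_0 P b$ with $a = \nabla_x f$, $M_0 = \tfrac{K}{L_g}\nabla^2_{xy} g$, $P = \prod_{i=1}^{k}(I - \nabla^2_{yy} g/L_g)$, and $b = \nabla_y f$, I would telescope the product to obtain the four‑term hybrid decomposition
\[
\bar{\nabla} f_1 - \bar{\nabla} f_2 = (a_1 - a_2) - (M_0^{(1)} - M_0^{(2)}) P^{(1)} b_1 - M_0^{(2)} (P^{(1)} - P^{(2)}) b_1 - M_0^{(2)} P^{(2)} (b_1 - b_2),
\]
and then apply $\|u+v\|^2 \le 2\|u\|^2 + 2\|v\|^2$ to split off $a_1 - a_2$, followed by $\|\sum_{i=1}^3 v_i\|^2 \le 3\sum_i \|v_i\|^2$ on the remaining three terms. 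This is exactly the source of the prefactors $2$ (on $L_{f_x}^2$) and $6 = 2\cdot 3$ (on the other three constants) appearing in $L_K^2$.

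I would then bound the four terms individually. The term $\|a_1-a_2\|^2$ follows from Lipschitzness of $\nabla_x f$ (Assumption~\ref{Assump: OuterFunction}(i)), giving $L_{f_x}^2(\|x_1-x_2\|+\|y_1-y_2\|)^2$. For $(M_0^{(1)}-M_0^{(2)})P^{(1)}b_1$, I would combine Lipschitzness of $\nabla^2_{xy} g$ (Assumption~\ref{Assump: InnerFunction}(iv)), so $\|M_0^{(1)}-M_0^{(2)}\| \le \tfrac{K}{L_g} L_{g_{xy}}(\|x_1-x_2\|+\|y_1-y_2\|)$; the uniform product bound $\|P^{(1)}\| \le \rho^k$ with $\rho := 1-\mu_g/L_g$, which comes from $\mu_g I \preceq \nabla^2_{yy} g \preceq L_g I$ (Assumption~\ref{Assump: InnerFunction}(ii)--(iii)), so that each factor satisfies $\|I - \nabla^2_{yy}g/L_g\| \le \rho$; and the bound $\|b_1\| \le C_{f_y}$ (Assumption~\ref{Assump: OuterFunction}(ii)). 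For $M_0^{(2)} P^{(2)} (b_1-b_2)$, I would instead use boundedness of $\nabla^2_{xy} g$ (Assumption~\ref{Assump: InnerFunction}(v)) for $\|M_0^{(2)}\| \le \tfrac{K}{L_g} C_{g_{xy}}$, the same $\|P^{(2)}\| \le \rho^k$, and Lipschitzness of $\nabla_y f$ for $\|b_1-b_2\| \le L_{f_y}(\|x_1-x_2\|+\|y_1-y_2\|)$.

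The main obstacle is the middle term $M_0^{(2)}(P^{(1)}-P^{(2)})b_1$, which requires bounding the \emph{difference of two matrix products}. I would use the telescoping identity $\prod_{i=1}^k A_i - \prod_{i=1}^k B_i = \sum_{j=1}^{k} \big(\prod_{i<j} A_i\big)(A_j - B_j)\big(\prod_{i>j} B_i\big)$, with $A_i = I - \nabla^2_{yy} g(x_1,y_1;\zeta^{(i)})/L_g$ and $B_i$ its analogue at $(x_2,y_2)$. Each factor has norm at most $\rho$, while Lipschitzness of $\nabla^2_{yy}g$ (Assumption~\ref{Assump: InnerFunction}(iv)) gives $\|A_j - B_j\| \le \tfrac{L_{g_{yy}}}{L_g}(\|x_1-x_2\|+\|y_1-y_2\|)$; summing the $k$ terms yields $\|P^{(1)}-P^{(2)}\| \le k\,\rho^{k-1}\tfrac{L_{g_{yy}}}{L_g}(\|x_1-x_2\|+\|y_1-y_2\|)$. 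The crucial feature is the \emph{linear‑in‑$k$} prefactor, which upon squaring becomes $k^2$ and is ultimately responsible for the $K^3$ scaling in the fourth constant of $L_K$.

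Finally I would take $\mathbb{E}_k$, which is where the denominators $2\mu_g L_g - \mu_g^2$ and $(L_g-\mu_g)^2$ emerge. Using $1-\rho^2 = (2\mu_g L_g - \mu_g^2)/L_g^2$, the geometric average gives $\mathbb{E}_k[\rho^{2k}] = \tfrac1K\sum_{k=0}^{K-1}\rho^{2k} \le \tfrac{L_g^2}{K(2\mu_g L_g - \mu_g^2)}$, which governs the two outer product terms. For the middle term, bounding $k^2 \le K^2$ and writing $\rho^{-2} = L_g^2/(L_g-\mu_g)^2$ yields $\mathbb{E}_k[k^2\rho^{2(k-1)}] \le K^2 \rho^{-2}\,\mathbb{E}_k[\rho^{2k}] \le \tfrac{K L_g^4}{(L_g-\mu_g)^2(2\mu_g L_g - \mu_g^2)}$. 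Substituting these into the squared bounds, the $K$‑powers collect as $K^2 \cdot K^{-1} = K$ for the two $C_{g_{xy}}^2 L_{f_y}^2$ and $C_{f_y}^2 L_{g_{xy}}^2$ terms and as $K^2 \cdot K = K^3$ for the middle $C_{g_{xy}}^2 C_{f_y}^2 L_{g_{yy}}^2$ term; together with the prefactors $2$ and $6$ this reproduces exactly the four summands inside $L_K^2$ in \eqref{eq:LKDef}, completing the proof.
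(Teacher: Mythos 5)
Your proposal is correct and follows essentially the same route as the paper's own proof (Lemma \ref{Lem: Lip_GradEst_Appendix} in Appendix \ref{Sec: Appendix_PreliminaryLemmas}): the same $2/3$-way splitting that produces the prefactors $2$ and $6$, the same telescoping identity for the difference of matrix products (which the paper packages as the standalone Lemma \ref{Lem: Lip_Product}), and the same geometric-series bounds $\mathbb{E}_k[\rho^{2k}] \leq L_g^2/(K(2\mu_g L_g - \mu_g^2))$ and $\mathbb{E}_k[k^2\rho^{2(k-1)}] \leq K L_g^4/((L_g-\mu_g)^2(2\mu_g L_g - \mu_g^2))$ yielding the identical four summands in $L_K^2$. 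The only cosmetic differences are that the paper proves Lipschitzness in $x$ (fixed $y$) and in $y$ (fixed $x$) separately rather than jointly, and you correctly flag the typo that the left-hand norm in the lemma statement should be squared.
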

 The detailed proof can be found in Appendix \ref{Sec: Appendix_PreliminaryLemmas}.
We remark that the above result is crucial for analyzing the error of the gradient estimate $h_t^f$ defined in \eqref{Eq: HybridGradEstimate_Outer}. To see this,
let us first define the errors of the gradient estimates for the outer and inner functions as follows   
	\begin{align}
	e_t^f &\coloneqq h_t^f - \bar{\nabla}f(x_t , y_{t}) - B_t , \quad 
	    	e_t^g \coloneqq h_t^g - \bar{\nabla}_y g(x_t , y_{t}), \label{eq:atg_analyze} 
	\end{align}
	where $B_t := B( x_t, y_{t} )$ denotes the bias. Rewriting $e_t^f$ using \eqref{Eq: HybridGradEstimate_Outer} gives the following recursion:
	\begin{align*}
	e_t^f & = (1 - \eta_t^f) e_{t - 1}^f  + (1 - \eta_t^f)  \big\{ \bar{\nabla} f(x_t, y_{t} ; \bar{\xi}_t) - \bar{\nabla} f(x_{t-1}, y_{t-1} ; \bar{\xi}_t) \nonumber \\
	&  \quad -  (\bar{\nabla} f(x_t, y_{t}) + B_t -   \bar{\nabla} f(x_{t-1}, y_{t-1}) - B_{t - 1} ) \big\}   + \eta_t^f  \big( \bar{\nabla} f(x_t , y_{t} ; \bar{\xi}_t) - \bar{\nabla}f(x_t , y_{t}) - B_t \big) .
	\end{align*}
Lemma \ref{Lem: Lip_GradEst} allows us to control the variance of the second term in the above relation as ${\cal O}(\alpha_t^2 \| h_{t-1}^f \|^2 + \beta_t^2 \| h_{t-1}^g\|^2 )$. This subsequently leads to a reduced error magnitude for $\mathbb{E} [\| e_t^f\|^2]$. Similarly, we can show a reduced error magnitude for $\mathbb{E}[\|e_t^g\|^2]$ for the inner gradient estimate. 

The above discussion suggests that we can track the gradient $\nabla \ell(x)$ using only stochastic gradient estimates \eqref{Eq: UpperLevel_StochasticGrad}, without needing to track each component stochastic vectors/matrices. This allows us to avoid costly Hessian inversions. In contrast, \cite{Chen_Arxiv_2021STABLE,Guo_Arxiv_2021_SVRB} track the individual stochastic vectors/matrices of \eqref{Eq: UpperLevel_StochasticGrad}, and then combine them together to yield an estimate of $\nabla \ell(x)$. This approach is unable to utilize the cheap stochastic estimates of Hessian and have to invert it directly.

Turning back to the convergence analysis of the \aname~algorithm, the main idea of our analysis is to demonstrate reduction of a properly constructed potential function across iterations. For smooth (possibly non-convex) objective function, this potential function consists of a linear combination of the norms of the error terms $\mathbb{E} [\| e_t^f\|^2]$ and $\mathbb{E}[\|e_t^g\|^2]$ along with the outer objective function $\ell(x_t)$ and the inner optimality gap $\|y_t-y^{*}(x_t)\|^2$. 
We obtain: 
\begin{theorem}
\label{Thm: Convergence_NC}
Under Assumptions \ref{Assump: OuterFunction}--\ref{Assump: Stochastic Grad}.
Fix $T \geq 1$ as the maximum iteration number. Set the number of samples used for the gradient estimator in \eqref{Eq: UpperLevel_StochasticGrad} as $K = ({L_g} / {\mu_g}) \log \left( {  C_{g_{xy}} C_{f_y} } T / \mu_g \right)$ and
\begin{equation}
    \alpha_t = \frac{1}{(w + t)^{1/3}}, \quad \beta_t = c_\beta \alpha_t, \quad \eta_t^f = c_{\eta_f} \alpha_t^2, \quad \eta_t^g = c_{\eta_g} \alpha_t^2 , 
\end{equation}
where $w, c_\beta, c_{\eta_f}, c_{\eta_g}$ are defined in \eqref{eq:step_param} of appendix.
The iterates generated by Algorithm \ref{Algo: Accelerated_STSA} satisfy 
\begin{align} \label{eq:converge_bound}
\mathbb{E}	\| \nabla \ell(x_{a(T)})\|^2 = \mathcal{O} \bigg( \frac{\ell(x_0) - \ell^\ast}{T^{2/3}} + \frac{\|y_0 - y^\ast(x_0) \|^2}{T^{2/3}} + \frac{\log(T)\sigma_f^2}{T^{2/3}} + \frac{\log(T)\sigma_g^2}{T^{2/3}} \bigg) .
\end{align}
\end{theorem}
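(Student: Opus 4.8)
The plan is to track a single potential function coupling the outer suboptimality $\ell(x_t)$, the inner tracking error $\|y_t-y^\ast(x_t)\|^2$, and the two gradient-estimation errors $e_t^f,e_t^g$, and to show it decreases by $\tfrac{\alpha_t}{2}\mathbb{E}\|\nabla\ell(x_t)\|^2$ per step up to summable noise. I would first assemble four building-block recursions. \textbf{(i) Outer descent.} Since $\ell$ is $L_f$-smooth by Lemma~\ref{Lem: Lip_Ghadhimi}, the update $x_{t+1}=x_t-\alpha_t h_t^f$, the identity $\langle a,b\rangle=\tfrac12(\|a\|^2+\|b\|^2-\|a-b\|^2)$, and $\alpha_t\le 1/(2L_f)$ give
$$\ell(x_{t+1})\le \ell(x_t)-\tfrac{\alpha_t}{2}\|\nabla\ell(x_t)\|^2-\tfrac{\alpha_t}{4}\|h_t^f\|^2+\tfrac{3\alpha_t}{2}\big(\|e_t^f\|^2+\|B_t\|^2+L^2\|y_t-y^\ast(x_t)\|^2\big),$$
where the bracket uses $h_t^f-\nabla\ell(x_t)=e_t^f+B_t+(\bar\nabla f(x_t,y_t)-\nabla\ell(x_t))$ with the first estimate of Lemma~\ref{Lem: Lip_Ghadhimi}. \textbf{(ii) Inner tracking.} Using $\mu_g$-strong convexity and $L_g$-smoothness of $g(x_t,\cdot)$ for the exact-gradient-step contraction, the bound $\|y^\ast(x_{t+1})-y^\ast(x_t)\|\le L_y\alpha_t\|h_t^f\|$, and a Young split with parameter $\asymp\beta_t$, I obtain
$$\|y_{t+1}-y^\ast(x_{t+1})\|^2\le\big(1-\tfrac{\mu_g\beta_t}{2}\big)\|y_t-y^\ast(x_t)\|^2+C\beta_t^2\|e_t^g\|^2+CL_y^2\alpha_t\|h_t^f\|^2 .$$

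\textbf{(iii)--(iv) Error recursions.} Rewriting $e_t^f$ through \eqref{Eq: HybridGradEstimate_Outer} as in the display preceding the theorem, the correction term is a martingale difference once conditioned on the past (the deterministic biases $B_t,B_{t-1}$ combine with $\bar\nabla f(\cdot)$ into the conditional means), so its second moment equals the variance of $\bar\nabla f(x_t,y_t;\bar\xi_t)-\bar\nabla f(x_{t-1},y_{t-1};\bar\xi_t)$. This is exactly where Lemma~\ref{Lem: Lip_GradEst} enters: it bounds that variance by $2L_K^2(\alpha_{t-1}^2\|h_{t-1}^f\|^2+\beta_{t-1}^2\|h_{t-1}^g\|^2)$, making the momentum correction a higher-order term rather than an $\mathcal{O}(1)$ variance. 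With the $\eta_t^f$-term contributing $(\eta_t^f)^2\sigma_f^2$ via Assumption~\ref{Assump: Stochastic Grad}(i), this yields
$$\mathbb{E}\|e_t^f\|^2\le(1-\eta_t^f)\mathbb{E}\|e_{t-1}^f\|^2+2(\eta_t^f)^2\sigma_f^2+2L_K^2\big(\alpha_{t-1}^2\mathbb{E}\|h_{t-1}^f\|^2+\beta_{t-1}^2\mathbb{E}\|h_{t-1}^g\|^2\big),$$
and an analogous, simpler inequality for $\mathbb{E}\|e_t^g\|^2$ with $\sigma_g^2,L_g$ (no bias, as $\nabla_y g(\cdot;\zeta)$ is unbiased).

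I would then combine these through the potential
$$\mathbb{V}_t=\mathbb{E}[\ell(x_t)]+\lambda_y\,\mathbb{E}\|y_t-y^\ast(x_t)\|^2+\tfrac{\lambda_f\alpha_{t}}{\eta_{t}^f}\,\mathbb{E}\|e_t^f\|^2+\tfrac{\lambda_g\alpha_{t}}{\eta_{t}^g}\,\mathbb{E}\|e_t^g\|^2 .$$
With $\eta_t^f=c_{\eta_f}\alpha_t^2$, the weight $\alpha_t/\eta_t^f\asymp 1/\alpha_t$ varies slowly ($\tfrac{\alpha_t}{\eta_t^f}-\tfrac{\alpha_{t-1}}{\eta_{t-1}^f}=\mathcal{O}(\alpha_t^2)$), so the $(1-\eta_t^f)$ contraction produces a clean $-\lambda_f\alpha_t\|e_{t-1}^f\|^2$ mass dominating the descent's $+\tfrac{3\alpha_t}{2}\|e_t^f\|^2$ after an index shift; likewise $-\lambda_y\tfrac{\mu_g\beta_t}{2}\|y_t-y^\ast(x_t)\|^2$ dominates $+\tfrac{3L^2\alpha_t}{2}\|y_t-y^\ast(x_t)\|^2$. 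Using $\|h_t^f\|^2\le 4(\|\nabla\ell(x_t)\|^2+\|e_t^f\|^2+\|B_t\|^2+L^2\|y_t-y^\ast(x_t)\|^2)$ and $\|h_t^g\|^2\le 2L_g^2\|y_t-y^\ast(x_t)\|^2+2\|e_t^g\|^2$, every residual coupling term from (ii)--(iv) carries a factor $\alpha_t$ and is absorbed by the negative $-\tfrac{\alpha_t}{4}\|h_t^f\|^2$ and the negative $\|y-y^\ast\|^2$, $\|e^g\|^2$ masses, provided $w,c_\beta,c_{\eta_f},c_{\eta_g},\lambda_y,\lambda_f,\lambda_g$ are set appropriately (this is what \eqref{eq:step_param} fixes), leaving $\mathbb{V}_{t+1}\le\mathbb{V}_t-\tfrac{\alpha_t}{2}\mathbb{E}\|\nabla\ell(x_t)\|^2+c_{\eta_f}\alpha_t^3\sigma_f^2+c_{\eta_g}\alpha_t^3\sigma_g^2+\mathcal{O}(\alpha_t\|B_t\|^2)$.

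Finally I would telescope from $t=0$ to $T-1$. The noise terms are $c_{\eta_f}\alpha_t^3\sigma_f^2\asymp\sigma_f^2/(w+t)$, so their sum is $\mathcal{O}(\sigma_f^2\log T)$ (and likewise $\sigma_g^2\log T$)---this is the origin of the $\log T$ factor; the choice $K=(L_g/\mu_g)\log(C_{g_{xy}}C_{f_y}T/\mu_g)$ makes $\|B_t\|\le 1/T$ by Lemma~\ref{lm:biased}, so the bias contributes $\mathcal{O}(T^{-4/3})$; and $\mathbb{V}_0\lesssim \ell(x_0)-\ell^\ast+\|y_0-y^\ast(x_0)\|^2+\sigma_f^2+\sigma_g^2$ since $\eta_0^f=\eta_0^g=1$ forces $\mathbb{E}\|e_0^f\|^2\le\sigma_f^2$ and $\mathbb{E}\|e_0^g\|^2\le\sigma_g^2$. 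Dividing $\tfrac12\sum_t\alpha_t\mathbb{E}\|\nabla\ell(x_t)\|^2\le\mathbb{V}_0+\mathcal{O}((\sigma_f^2+\sigma_g^2)\log T)$ by $\tfrac{T\alpha_{T-1}}{2}$ (using $\alpha_t\ge\alpha_{T-1}$ and $\tfrac{1}{T\alpha_{T-1}}\asymp T^{-2/3}$) gives exactly \eqref{eq:converge_bound} for the uniformly sampled $x_{a(T)}$. The main obstacle is precisely the constant selection above: producing one set of constants that simultaneously enforces step-size admissibility ($\alpha_t\le 1/(2L_f)$, $\beta_t$ small), the slow variation of the potential weights, and all the absorption inequalities is the delicate point, and it is why the rate degrades only by the harmless $\log T$ factor rather than a polynomial one.
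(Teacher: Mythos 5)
Your proposal is correct and follows essentially the same route as the paper's proof: the same four building-block recursions (smoothness descent on $\ell$, inner-iterate contraction, and the two momentum-error recursions whose key variance term is controlled by Lemma~\ref{Lem: Lip_GradEst}), the same potential function with $\Theta(1/\alpha_t)$ weights on $\mathbb{E}\|e_t^f\|^2$ and $\mathbb{E}\|e_t^g\|^2$ as in \eqref{Eq: PotentialFunction}, and the same telescoping and division by $T\alpha_T$, with the $\log T$ factor coming from $\sum_t \alpha_t^3$ and the bias made $O(1/T)$ by the choice of $K$. The only cosmetic difference is that the paper retains a negative $\|\nabla_y g(x_t,y_t)\|^2$ term from the co-coercivity inequality in the inner-iterate lemma and cancels the gradient-norm terms of the error recursions against it, whereas you convert those terms into $\|y_t-y^\ast(x_t)\|^2$ via $\|\nabla_y g(x_t,y_t)\|\le L_g\|y_t-y^\ast(x_t)\|$ and absorb them into the tracking contraction---an equivalent bookkeeping choice.
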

Details of the constants in the theorem and its proof can be found in Appendix~\ref{Sec: Appendix_Thm_NC}. 
The above result shows that to reach an $\epsilon$-stationary point, the \aname~algorithm requires  $\widetilde{\cal O}( \epsilon^{-3/2} )$ (omitting logarithmic factors) samples of stochastic gradients from both the upper and lower level functions. 

This sample complexity matches the best complexity bounds for single-level stochastic optimization like SPIDER \cite{Fang_Spider_NIPS_2018}, STORM \cite{Cutkosky_NIPS2019}, SNVRG \cite{Zhou_NIPS_2018_SNVRG} and Hybrid SGD \cite{Dinh_Arxiv_2019}. 
We claim that this is a \emph{near-optimal} sample complexity for bilevel stochastic optimization since for example, we have imposed additional smoothness conditions on the Hessian of the lower level problem. We will leave this as an open question to investigate the lower bound complexity for bilevel stochastic optimization.
	    
\textbf{Strongly-convex $\ell(x)$.}~~ We also discuss the case when in addition to smoothness, $\ell (\cdot)$ is $\mu_f$-strongly-convex. Here, a stronger guarantee can be obtained: 
\begin{theorem} \label{Thm: Convergence_SC_Fixed}
Under Assumptions \ref{Assump: OuterFunction}--\ref{Assump: Stochastic Grad}, and suppose $\ell(x)$ is $\mu_f$-strongly-convex. Fix any $T \geq 1$, set the number of samples for the gradient estimator \eqref{Eq: UpperLevel_StochasticGrad} as $K = ({L_g} / {2\mu_g}) \log \big(  {C_{g_{xy}}^2 C_{f_y}^2} T / \mu_g^2 \big)$ and 
\begin{align*}
\alpha_t \equiv \alpha \leq \bigg\{  \frac{1}{\mu_f + 1}, \frac{1}{2 \mu_g \hat{c}_\beta} ,\frac{\mu_g}{ \hat{c}_\beta L_g^2 }, \frac{1}{8L_K^2 + L_f},  \frac{L^2 + 2 L_y^2}{4 L_K^2 L_g^2 \hat{c}_\beta^2} \bigg\},~~\eta_t^f  \equiv (\mu_f + 1) \alpha,~~\beta_t  \equiv \hat{c}_\beta \alpha,\vspace{.2cm}
\end{align*}
where $\eta_t^g \equiv 1$, $\hat{c}_\beta = {8 L_y^2  + 8 L^2 + 2 \mu_f}/{\mu_g}$ and $L_K$ is defined in \eqref{eq:LKDef}. The iterates generated by Algorithm~\ref{Algo: Accelerated_STSA} satisfy for any $t \geq 1$ that:
\begin{align} \label{eq:strcvx_bd_main}
\mathbb{E}[\ell(x_t) - \ell^\ast] \leq (1 - \mu_f \alpha)^t \bar{\Delta}_0  + \frac{1}{\mu_f} \Big\{ \frac{2}{T} + \big[ (2 \hat{c}_\beta^2 +  8 \hat{c}_\beta^2 L_K^2)  \sigma_g^2 + 2 (\mu_f+1)^2  \sigma_f^2 \big] \alpha \Big\}, 
\end{align}
where $\bar{\Delta}_0 \coloneqq \ell(x_0) - \ell^\ast + \sigma_f^2 + \|y_0 - y^\ast(x_0) \|^2$.
\end{theorem}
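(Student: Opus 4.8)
The plan is to establish linear convergence through a Lyapunov (potential) function argument, reusing the building blocks of the non-convex analysis but replacing the telescoping descent with a geometric contraction driven by strong convexity. A crucial simplification in this regime is that $\eta_t^g \equiv 1$, so the lower-level update collapses to plain SGD, $h_t^g = \nabla_y g(x_t, y_t; \zeta_t)$, and the inner estimation error $e_t^g$ from \eqref{eq:atg_analyze} is merely sampling noise with variance at most $\sigma_g^2$; there is no need to track it recursively. Hence the potential need only carry three quantities, and I would define $V_t := \mathbb{E}[\ell(x_t) - \ell^\ast] + c_1 \mathbb{E}\|y_t - y^\ast(x_t)\|^2 + c_2 \mathbb{E}\|e_t^f\|^2$ for constants $c_1, c_2 > 0$ to be fixed, aiming to prove $V_{t+1} \leq (1 - \mu_f\alpha) V_t + R$ for a small per-step residual $R$.

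First I would derive the outer descent inequality. Using $L_f$-smoothness of $\ell$ from Lemma \ref{Lem: Lip_Ghadhimi} with $x_{t+1} = x_t - \alpha h_t^f$, applying Young's inequality to the cross term $\langle \nabla\ell(x_t), h_t^f - \nabla\ell(x_t)\rangle$, and then invoking $\mu_f$-strong convexity through $\|\nabla\ell(x_t)\|^2 \geq 2\mu_f(\ell(x_t) - \ell^\ast)$, gives $\mathbb{E}[\ell(x_{t+1}) - \ell^\ast] \leq (1 - \mu_f\alpha)\mathbb{E}[\ell(x_t) - \ell^\ast] + \tfrac{\alpha}{2}\mathbb{E}\|h_t^f - \nabla\ell(x_t)\|^2 + \tfrac{L_f\alpha^2}{2}\mathbb{E}\|h_t^f\|^2$. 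Splitting the mismatch as $h_t^f - \nabla\ell(x_t) = e_t^f + B_t + (\bar{\nabla} f(x_t, y_t) - \nabla\ell(x_t))$ and bounding it by $3\|e_t^f\|^2 + 3\|B_t\|^2 + 3L^2\|y_t - y^\ast(x_t)\|^2$ (the last piece via Lemma \ref{Lem: Lip_Ghadhimi}) shows the descent consumes exactly the two error quantities the potential holds; the choice $K = (L_g/2\mu_g)\log(C_{g_{xy}}^2 C_{f_y}^2 T/\mu_g^2)$ combined with Lemma \ref{lm:biased} forces $\|B_t\|^2 \leq 1/T$.

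Next I would assemble the two auxiliary recursions. For the inner tracking error, the standard SGD step on the $\mu_g$-strongly-convex, $L_g$-smooth map $g(x_t, \cdot)$ yields $\mathbb{E}\|y_{t+1} - y^\ast(x_t)\|^2 \leq (1 - \mu_g\beta)\|y_t - y^\ast(x_t)\|^2 + \beta^2\sigma_g^2$; the optimizer drift is handled by $\|y^\ast(x_{t+1}) - y^\ast(x_t)\| \leq L_y\alpha\|h_t^f\|$ (Lemma \ref{Lem: Lip_Ghadhimi}) together with a Young split of parameter $\asymp \mu_g\beta$, giving contraction $1 - \mu_g\beta/2$ at the cost of an $O(\alpha^2/\beta)\|h_t^f\|^2$ term. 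For the outer error, rewriting $e_t^f$ through the recursion below \eqref{eq:atg_analyze} and using Lemma \ref{Lem: Lip_GradEst} to bound the variance of the difference term produces $\mathbb{E}\|e_t^f\|^2 \leq (1 - \eta_t^f)\mathbb{E}\|e_{t-1}^f\|^2 + O(L_K^2)(\alpha^2\|h_{t-1}^f\|^2 + \beta^2\sigma_g^2) + (\eta_t^f)^2\sigma_f^2$, where $\eta_t^f = (\mu_f+1)\alpha$ sets the contraction factor to $1 - (\mu_f+1)\alpha \leq 1 - \mu_f\alpha$.

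The main obstacle, and the source of the explicit step-size ceiling and of the value $\hat{c}_\beta = (8L_y^2 + 8L^2 + 2\mu_f)/\mu_g$, is the simultaneous balancing of these three recursions: each emits a stray $\|h_t^f\|^2$ term, while $\|h_t^f\|^2 \leq 2\|e_t^f\|^2 + O(\|\nabla\ell(x_t)\|^2 + \|B_t\|^2 + L^2\|y_t - y^\ast(x_t)\|^2)$ with $\|\nabla\ell(x_t)\|^2 \leq 2L_f(\ell(x_t) - \ell^\ast)$ feeds all three potential components back into one another. The choice $\hat{c}_\beta \propto (L_y^2 + L^2)/\mu_g$ makes the inner contraction $\mu_g\beta/2 = \hat{c}_\beta\mu_g\alpha/2 = (4L_y^2 + 4L^2 + \mu_f)\alpha$ fast enough to absorb both the drift and the outer coupling, while the slack $+1$ in $\eta_t^f$ does the same for the outer error; I would then tune $c_1, c_2$ and require $\alpha$ below each of the five listed thresholds so that every $O(\alpha^2)\|h\|^2$ cross term is dominated by the leading negative contributions, yielding the uniform rate $1 - \mu_f\alpha$. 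Collecting terms leaves the per-step residual $R = \tfrac{2\alpha}{T} + \big[(2\hat{c}_\beta^2 + 8\hat{c}_\beta^2 L_K^2)\sigma_g^2 + 2(\mu_f+1)^2\sigma_f^2\big]\alpha^2$, the $\tfrac{2\alpha}{T}$ stemming from the squared bias and the $\alpha^2$ terms from the injected variances. Finally, unrolling $V_{t+1} \leq (1 - \mu_f\alpha)V_t + R$ and using $\sum_{k=0}^{t-1}(1-\mu_f\alpha)^k \leq 1/(\mu_f\alpha)$ to divide $R$ by $\mu_f\alpha$ reproduces \eqref{eq:strcvx_bd_main} exactly, with $\bar{\Delta}_0 = \ell(x_0) - \ell^\ast + \sigma_f^2 + \|y_0 - y^\ast(x_0)\|^2$ appearing as an upper bound on $V_0$ once $\mathbb{E}\|e_0^f\|^2 \leq \sigma_f^2$ is used (which follows from $\eta_0^f = 1$).
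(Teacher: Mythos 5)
Your overall architecture coincides with the paper's proof: the three-component potential $\mathbb{E}[\ell(x_t)-\ell^\ast]+c_1\mathbb{E}\|y_t-y^\ast(x_t)\|^2+c_2\mathbb{E}\|e_t^f\|^2$ (the paper takes $c_1=c_2=1$), the observation that $\eta_t^g\equiv 1$ makes $e_t^g$ pure sampling noise with $\mathbb{E}\|e_t^g\|^2\le\sigma_g^2$, the role of the extra ``$+1$'' in $\eta_t^f=(\mu_f+1)\alpha$, the role of $\hat{c}_\beta$, the bias control $\|B_t\|^2\le 1/T$, and the final unrolling with $\mathbb{E}\|e_0^f\|^2\le\sigma_f^2$. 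The gap sits exactly in the step you yourself call ``the main obstacle'': the stray $\|h_t^f\|^2$ terms. By applying Young's inequality to the cross term you end up with a \emph{positive} remainder $\tfrac{L_f\alpha^2}{2}\mathbb{E}\|h_t^f\|^2$ and no negative term in reserve, and you then propose to close the loop via $\|h_t^f\|^2\lesssim\|e_t^f\|^2+\|\nabla\ell(x_t)\|^2+\|B_t\|^2+L^2\|y_t-y^\ast(x_t)\|^2$ with $\|\nabla\ell(x_t)\|^2\le 2L_f(\ell(x_t)-\ell^\ast)$. This cannot deliver the factor $(1-\mu_f\alpha)$ claimed in \eqref{eq:strcvx_bd_main}, because the $(\ell(x_t)-\ell^\ast)$ component of the potential has \emph{zero slack}: the descent step already fixes its coefficient at exactly $(1-\mu_f\alpha)$, so any positive feedback into it pushes the contraction factor strictly above $(1-\mu_f\alpha)$. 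The feedback is not negligible either: the dominant stray term comes from the inner-tracking recursion and has size $c_1\cdot\tfrac{2L_y^2\alpha}{\mu_g\hat{c}_\beta}\|h_t^f\|^2$, which is order $\alpha$, not $\alpha^2$ (your own $O(\alpha^2/\beta)$ with $\beta=\hat{c}_\beta\alpha$), so routing it through your bound injects a term of order $\alpha L_f(\ell(x_t)-\ell^\ast)$ with $L_f\ge\mu_f$. Killing that injection by shrinking $c_1$ forces $c_1\lesssim\mu_f/L_f$, which conflicts with the lower bound on $c_1$ (a problem constant independent of $\mu_f/L_f$, roughly $L^2/(2L^2+2L_y^2)$) needed so that the inner contraction $c_1(4L_y^2+4L^2+\mu_f)\alpha$ can absorb the $2\alpha L^2\|y_t-y^\ast(x_t)\|^2$ leakage from the outer descent. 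For ill-conditioned $\ell$ these requirements are incompatible, and in no regime is the rate $(1-\mu_f\alpha)$ recovered exactly.

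The repair is what the paper does in Lemma~\ref{Lem: Smoothness_l_main_SC}: replace Young's inequality by the exact identity $-\alpha\langle\nabla\ell(x_t),h_t^f\rangle=-\tfrac{\alpha}{2}\|\nabla\ell(x_t)\|^2-\tfrac{\alpha}{2}\|h_t^f\|^2+\tfrac{\alpha}{2}\|h_t^f-\nabla\ell(x_t)\|^2$, which lets you apply strong convexity to the full $-\tfrac{\alpha}{2}\|\nabla\ell(x_t)\|^2$ \emph{and} retain a negative term $-\tfrac{\alpha}{2}(1-\alpha L_f)\|h_t^f\|^2$. Then no upper bound on $\|h_t^f\|^2$ is ever invoked: the stray terms $\tfrac{\alpha}{4}\|h_t^f\|^2$ (inner recursion, after substituting $\hat{c}_\beta$) and $4L_K^2\alpha^2\|h_t^f\|^2$ ($e^f$ recursion) are cancelled outright against it, and this cancellation is precisely what the listed condition $\alpha\le 1/(8L_K^2+L_f)$ enforces. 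A secondary omission in your proposal: your $e_t^f$ recursion keeps only $O(L_K^2)\beta^2\sigma_g^2$, but bounding $\mathbb{E}\|h_t^g\|^2\le 2\sigma_g^2+2L_g^2\|y_t-y^\ast(x_t)\|^2$ also produces the term $8L_K^2L_g^2\beta^2\mathbb{E}\|y_t-y^\ast(x_t)\|^2$ (Lemma~\ref{Lem: SC_Descent_grad_error}), and it is this term that the fifth step-size condition $\alpha\le (L^2+2L_y^2)/(4L_K^2L_g^2\hat{c}_\beta^2)$ exists to absorb.
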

The detailed proof can be found in Appendix \ref{Sec: Appendix_Thm_SC}. For large $T$, setting $\alpha \asymp 1/T$ shows that the bound in \eqref{eq:strcvx_bd_main} decreases at the rate of ${\cal O}(1/T)$. 
	
Theorem \ref{Thm: Convergence_SC_Fixed} shows that to reach an $\epsilon$-optimal point, the \aname~algorithm requires $\widetilde{\cal O}(\epsilon^{-1})$ stochastic gradient samples from the upper and lower level problems, also see the detailed calculations in Appendix~\ref{Sec: Appendix_Thm_SC}. This improves over TTSA \cite{Hong_TTSA_Arxiv_2020} which requires $\widetilde{\cal O}(\epsilon^{-1.5})$ samples, and BSA \cite{Ghadimi_BSA_Arxiv_2018} which requires $\widetilde{\cal O}(\epsilon^{-1})$, $\mathcal{O}(\epsilon^{-2})$ samples for the upper and lower level problems, respectively.
Again, we achieve similar sample complexity as SGD applied on strongly-convex single-level optimization. 

Interestingly, in Theorem~\ref{Thm: Convergence_SC_Fixed}, we have selected $\eta_t^g \equiv 1$ where the momentum term in the lower level gradient vanishes. In this way, the \aname~algorithm is reduced into a \emph{single-momentum} algorithm where the recursive momentum acceleration is only applied to the upper level gradient.

\section{Numerical experiments}
\label{Sec: Experiments}
In this section, we supplement the theoretical results presented in Section \ref{sec: Algo} with experiments on real datasets. We demonstrate the efficacy of \aname~for the meta learning \eqref{Eq: Meta_Leaning} and hyperparameter optimization \eqref{eq:clean} tasks. We also examine the performance of \aname~when combined with an Adam-like update rule [cf.~see Algorithm~\ref{Algo: Adam_Direction}] for the meta learning task.  

\begin{figure}[b!]
 \centering
 \includegraphics[width=.45\linewidth]{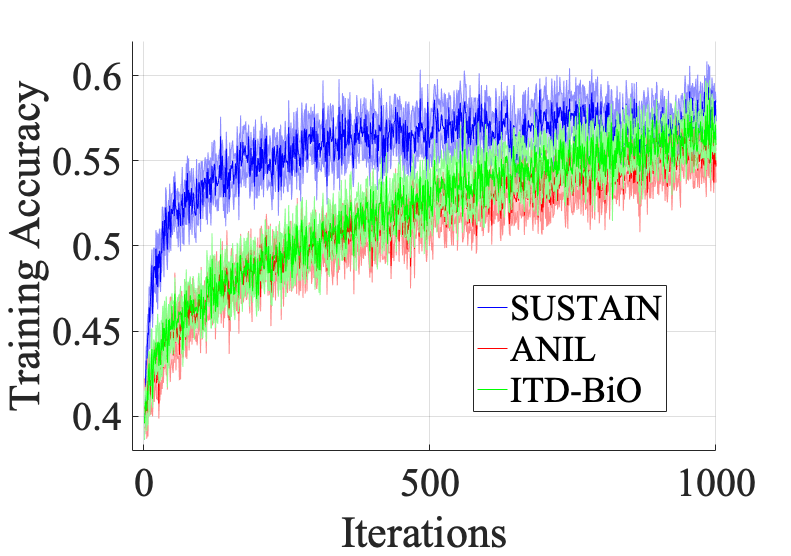} \quad
  \includegraphics[width=.45\linewidth]{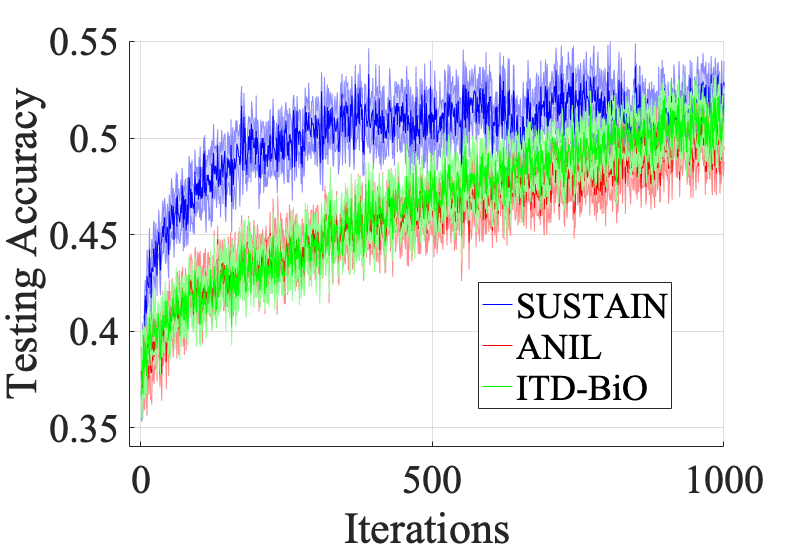}
\caption{\textbf{Meta learning}: 5-way 5-shot learning task on the {\tt miniImageNet} dataset. We plot the training and testing accuracy against the number of iterations.} \label{exper: Meta: mini-image}  
\end{figure}

\textbf{Meta learning.}~~We consider meta learning problem \eqref{Eq: Meta_Leaning} with \texttt{miniImageNet} \cite{Vinyals_miniImageNet_NIPS2016, Ravi_2016_MiniImage} and \texttt{FC100} \cite{Oreshkin_2018_Tadam} datasets. Both  datasets consist of 100 classes with each class containing 600 images. For the \texttt{miniImageNet}, we apply \texttt{learn2learn} \cite{Arnold_l2l_2020} (available: \url{https://github.com/learnables/learn2learn}) to partition the $100$ classes from \texttt{miniImageNet} into subsets of $64$, $16$ and $20$ for meta training, meta validation and meta testing, respectively. For \texttt{FC100}, we follow the setting of \cite{Oreshkin_2018_Tadam, Ji_ProvablyFastBilevel_Arxiv_2020} where 100 classes are split into 60, 20 and 20 classes for meta-training, meta-validation and meta-testing, respectively. For both datasets, we consider a 5-way 5-shot learning task \cite{finn2017model, raghu2019rapid} where the algorithm aims to classify samples into 5 unseen classes using only 5 available samples. We implement the solver using a 4-layer CNN (with different width for each dataset). For both datasets, at each iteration, we sample a batch of $32$ tasks from a set of $20000$ tasks allocated for training and $600$ each for validation and testing.

We first compare the performance of \aname~to ITD-BiO \cite{Ji_ProvablyFastBilevel_Arxiv_2020} and ANIL \cite{raghu2019rapid} for meta learning task on \texttt{miniImageNet} dataset for the vanilla version of the algorithms proposed in respective works\footnote{We excluded MAML \cite{finn2017model} from this set of experiments as its performance with SG based outer update was considerably worse compared to other algorithms.}. For each algorithm, we implement $10$ inner and $1$ outer update and the performance is averaged over $10$ Monte Carlo runs. For ANIL and ITD-BiO, we use the parameter selection suggested in \cite{Arnold_l2l_2020, Ji_ProvablyFastBilevel_Arxiv_2020}. Specifically, for ANIL, we use inner-loop stepsize of $0.1$ and the outer-loop (meta) stepsize as $0.002$. For ITD-BiO, we choose the inner-loop stepsize as $0.05$ and the outer-loop stepsize to be $0.005$. For \aname, we choose the outer-loop stepsize $\alpha_t$ as $\kappa/(1 + t)^{1/3}$ and choose $\kappa \in [0.1, 1]$, we choose the momentum parameter $\eta_t$ as $\bar{c} \alpha_t^2/\kappa^2$ and tune for $\bar{c} \in \{2,5,10,15,20\}$, finally, we fix the inner stepsize as $0.05$. For the inner loop ITD-BiO and \aname~utilize the gradient descent optimizer. 
Figure \ref{exper: Meta: mini-image}, shows that when ITD-BiO and ANIL utilize vanilla SG direction for the outer level update, \aname~outperforms rest of the algorithms for the meta learning problem. Specifically, we compare the training and testing performance of the algorithms with the number of iterations (i.e., the outer update $t$ in Algorithm 1). In each iteration, all the algorithms access the same number of samples while \aname~requiring twice the number of gradient computations (cf. \eqref{Eq: HybridGradEstimate_Outer}). As observed from Figure \ref{exper: Meta: mini-image}, \aname~requires the smallest number of iterations (samples) and gradient computations to achieve a given training/testing accuracy on the benchmarked dataset. Next, we show that the performance of the algorithms can be substantially improved by adapting Adam \cite{kingma2014adam} as the outer optimizer.

\textbf{Meta learning (adam \cite{kingma2014adam} based outer update)}~~We conduct additional experiments on the meta learning task and demonstrate the following: (1) for the outer level update we can adapt Adam \cite{kingma2014adam} optimizer with the \aname~framework to achieve better performance, (2) the outer gradient estimate \eqref{Eq: HybridGradEstimate_Outer} for \aname~can be designed with only one gradient computation per iteration (instead of two) without compromising performance, and (3) \aname~outperforms MAML \cite{finn2017model}, ANIL \cite{raghu2019rapid} and ITD-BiO \cite{Ji_ProvablyFastBilevel_Arxiv_2020} when all algorithms implement Adam for the outer level update. 

\begin{figure}[t]
 \centering
 \includegraphics[width=.45\linewidth]{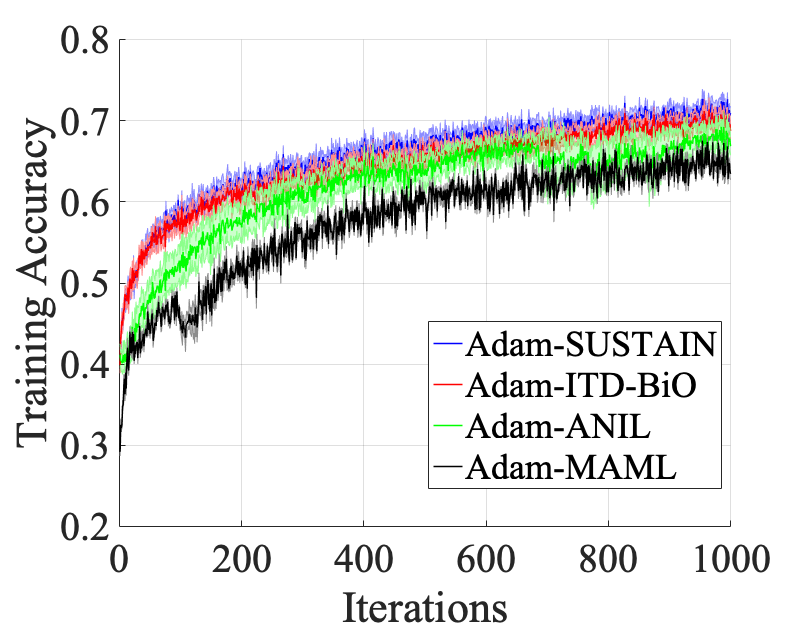} \quad
  \includegraphics[width=.45\linewidth]{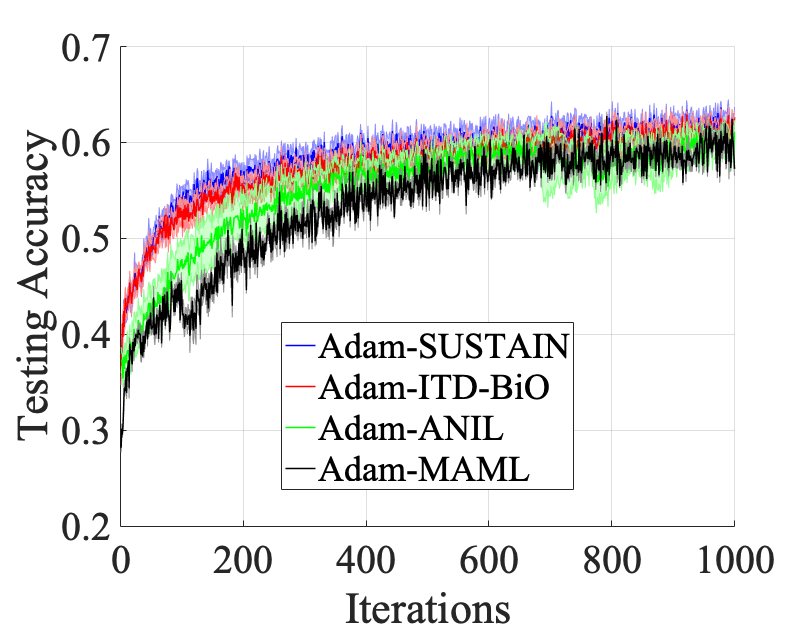}
\caption{\textbf{Meta learning}: 5-way 5-shot learning task on the {\tt miniImageNet} dataset. We plot the training and testing accuracy against the number of iterations with each iteration representing one outer level update step. All the algorithms utilize Adam \cite{kingma2014adam} optimizer for the outer loop update. }
\label{exper: Meta_Adam: mini-image}  
\end{figure} 

\begin{figure}[t]
 \centering
 \includegraphics[width=.45\linewidth]{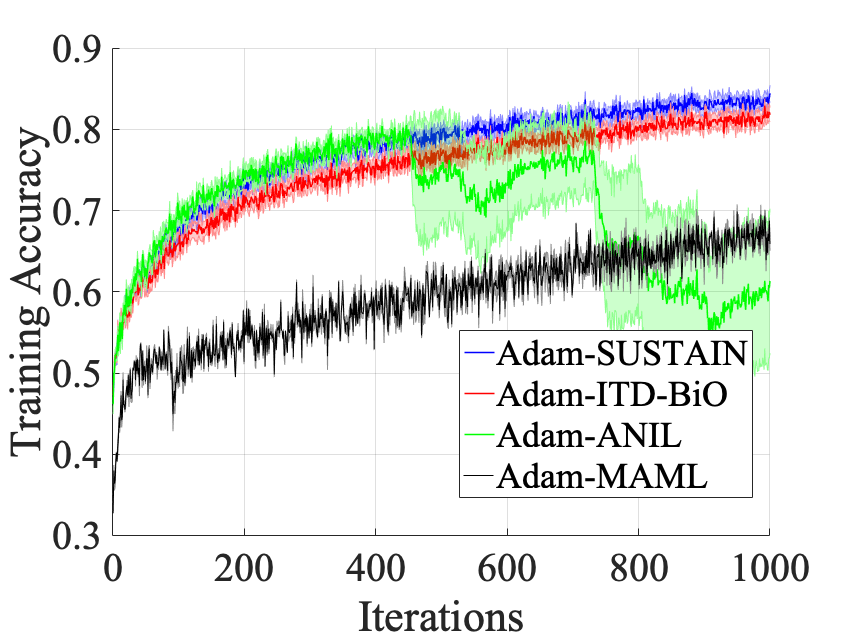} \quad
  \includegraphics[width=.45\linewidth]{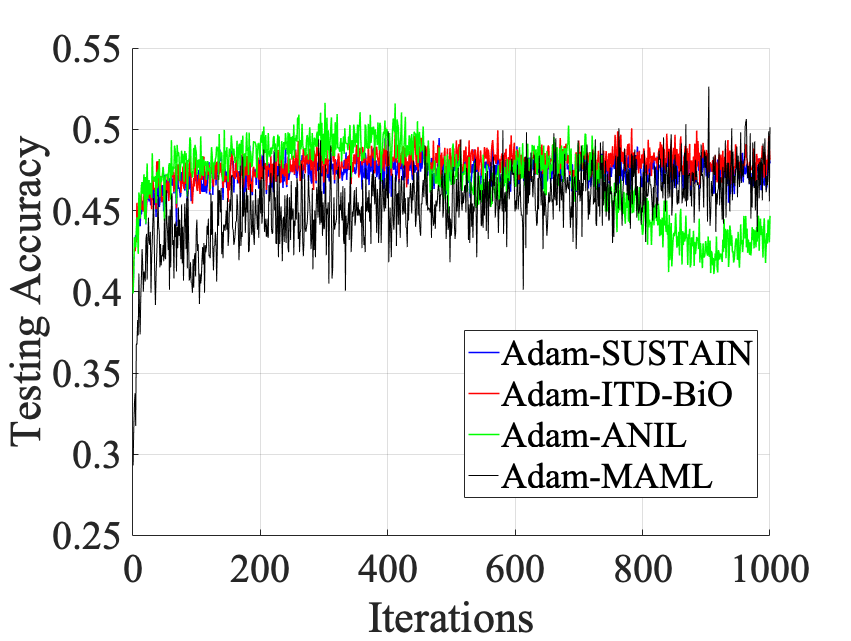}
\caption{\textbf{Meta learning}: 5-way 5-shot learning task on the {\tt FC100} dataset. We plot the training and testing accuracy against the number of iterations with each iteration representing one outer level update. All the algorithms utilize Adam \cite{kingma2014adam} optimizer for the outer loop update.} \label{exper: Meta_Adam: FC100}  
\end{figure}

For this setting, we compare heuristic versions of \aname~with MAML \cite{finn2017model}, ANIL \cite{raghu2019rapid} and recently proposed ITD-BiO \cite{Ji_ProvablyFastBilevel_Arxiv_2020}, where these algorithms all utilize the Adam \cite{kingma2014adam} solver for the outer problem's update. These heuristic algorithms are also used in \cite{Ji_ProvablyFastBilevel_Arxiv_2020} when comparing performance of the bilevel algorithms for meta-learning tasks. Note that these Adam-based bilevel algorithms for meta learning do not have any theoretical performance guarantees. Nevertheless, in the following we show that they perform well in practice \cite{Ji_ProvablyFastBilevel_Arxiv_2020}. 
	

We first discuss the parameter setting for the meta learning task using \texttt{miniImageNet} dataset. For the Adam versions of ANIL and ITD-BiO, we choose the parameters as suggested in \cite{Arnold_l2l_2020,Ji_ProvablyFastBilevel_Arxiv_2020}. For all the algorithms, we execute 10 update steps in the inner loop followed by a single outer update step. Each update step is counted as a single iteration. The implementation of MAML and ANIL is adopted from existing implementations in \cite{Arnold_l2l_2020}. For MAML, we choose the inner loop stepsize to be $0.5$ and the outer loop stepsize to be $0.003$. For ANIL we utilize inner loop stepsize of $0.1$ and outer loop stepsize of $0.002$. Both ITD-BiO and \aname~utilize gradient descent with stepsize of $0.05$ as the inner optimizer. For the outer update ITD-BiO uses a stepsize of $0.002$ (the parameters for ITD-BiO are selected based the repository \url{https://github.com/JunjieYang97/stocBiO}).
For \aname~we set the outer stepsize as $\alpha_t = 0.005$ and tune for the momentum parameter $\eta_t^f = \bar{c}/\kappa^2 (1 + t)^{2/3}$ with fixed $\kappa = 0.005$ by choosing $\bar{c} \in \{0.25, 2.5, 5, 10\}$. In contrast to other algorithms, \aname~applies Adam \cite{kingma2014adam} to the hybrid stochastic gradient estimator used for the outer update \eqref{Eq: HybridGradEstimate_Outer}. For detailed steps please see Algorithm \ref{Algo: Adam_Direction}\footnote{Note that the vector division and exponent operations in the Algorithm are implemented element wise. The values of the parameters chosen for Adam are default values used by the PyTorch library.}. Moreover, it is worth noting that the direction update rule Option II given in \eqref{Eq: HybridGradEstimate_Outer_1Sample} is a modification of the original update given in \eqref{Eq: HybridGradEstimate_Outer} (or equivalently Option I in \eqref{Eq: HybridGradEstimate_Outer_1Sample}). Such a rule requires just a single (mini-batch) gradient computation per iteration (which is the same as MAML, ANIL and ITD-BiO), and in practice, its performance is very close to that of Option I. Our results below uses Option II as the update direction. 
\begin{align}
  \label{Eq: HybridGradEstimate_Outer_1Sample}
	\bar{h}_{t}^{f}   = \begin{cases}
		\bar{\nabla} f(x_t , y_t ; \bar{\xi}_t) +  (1 - \eta_{t}^f)  \big(\bar{h}_{t - 1}^{f}   -    \bar{\nabla} f(x_{t-1}, y_{t - 1} ; \bar{\xi}_{t}) \big) & \quad \text{Option I}\\
	\bar{\nabla} f(x_t , y_t ; \bar{\xi}_t) +  (1 - \eta_{t}^f)  \big(\bar{h}_{t - 1}^{f}   -  \underbrace{ \bar{\nabla} f(x_{t-1}, y_{t - 1} ; \bar{\xi}_{t-1})}_{\text{Previous SG}}  \big)   & \quad \text{Option II}
	\end{cases}
\end{align}
	
	\begin{algorithm}[t]
\caption{Update direction for Adam-\aname~(also see footnote$^1$)} \label{Algo: Adam_Direction}
\begin{algorithmic}[1]
\State{\textbf{Parameters}: $\gamma_1 = 0.9$, $\gamma_2 = 0.999$, $m_0 = 0$, $v_0 = 0$, $\epsilon = 10^{-8}$ and $\eta_t^f$}
\For{$t = 1, \cdots, T$}
\State{\textbf{Input}: $(x_t, y_t)$, $(x_{t-1}, y_{t-1})$ from Algorithm \ref{Algo: Accelerated_STSA}.}
	\State{Compute the gradient estimator $\bar{h}^f_t$ using Option I or II in \eqref{Eq: HybridGradEstimate_Outer_1Sample} }
	\State{Update first moment estimate: $m_t \leftarrow \gamma_1 \cdot m_{t-1} + (1 - \gamma_1) \bar{h}_t^f$}
		\State{Bias-correction for first moment estimate: $m_t \leftarrow  m_t/(1 - (\gamma_1)^t)$}
	\State{Update second moment estimate: $v_t \leftarrow \gamma_2 \cdot v_{t-1} + (1 - \gamma_2) (\bar{h}_t^f)^2$}
	\State{Bias-correction for second moment estimate: $v_t \leftarrow  v_t/(1 - (\gamma_2)^t)$}
\State{Use the update direction: $h^f_t \leftarrow  m_t/ (\sqrt{v_t} + \epsilon)$}
\EndFor
\State{{\bf Return:} $h^f_t$}
\end{algorithmic}
\end{algorithm}
	
In Figure \ref{exper: Meta_Adam: mini-image}, we plot the training and testing performance against the number of iterations for the Adam version \aname~with other algorithms for 5-way 5-shot learning task on \texttt{miniImageNet} dataset. Note from the discussion above, we know that in each iteration all the algorithms access the same number of sample, and spend the same amount of (mini-batch) gradient computation efforts. Consequently, Figure \ref{exper: Meta_Adam: mini-image} implies that \aname~outperforms ITD-BiO, ANIL and MAML as it requires fewest iterations (thus samples and gradient computation) to achieve the improved performance. Importantly, these Adam-based algorithms  significantly outperform their vanilla version (cf. Figure \ref{exper: Meta: mini-image} for performance with SGD), in terms of both accuracy and speed. 


 Next, we compare the performance of \aname~with other algorithms for the meta learning task using \texttt{FC100} dataset. In contrast to the previous dataset, for this task we execute 20 update steps in the inner loop followed by a single outer update step. Similar to \texttt{miniImageNet} dataset, we adopt existing implementations of MAML and ANIL from \cite{Arnold_l2l_2020} and ITD-BiO from \cite{Ji_ProvablyFastBilevel_Arxiv_2020}. For MAML, we choose inner loop stepsize of 0.5 and the outer loop stepsize of 0.001. For ANIL we utilize inner loop stepsize of 0.1 and outer loop stepsize of 0.001. In the inner loop, both ITD-BiO and \aname~utilize gradient descent with a stepsize of 0.1. For the outer update ITD-BiO uses a stepsize of 0.001 (the parameters for ITD-BiO are selected based the repository \url{https://github.com/JunjieYang97/stocBiO}).
 For the outer update \aname~utilizes the same setting as required for \texttt{miniImageNet} dataset and the Adam based outer update direction as computed in Algorithm \ref{Algo: Adam_Direction}. In Figure \ref{exper: Meta_Adam: FC100}, we plot the training and testing performance with the number of iterations for \aname~and other algorithms for 5-way 5-shot learning task on \texttt{FC100} dataset. Note that \aname~outperforms rest of the algorithms on the training task and performs on par with other algorithms with respect to the testing performance. Moreover, note that initially ANIL performs better but since the number of inner steps are relatively large (20 in this case), ANIL's performance degrades after a certain number of iterations. Similar behavior was noted for ANIL in the results of \cite{Ji_ProvablyFastBilevel_Arxiv_2020}. 
 	
 The above set of experiments showed that the Adam \cite{kingma2014adam} optimizer can be incorporated with \aname~and other algorithms to achieve improved performance compared to vanilla SG based algorithms. We also showed that the gradient estimator for \aname~can be modified to require only single (batch) gradient evaluation per iteration (cf. \eqref{Eq: HybridGradEstimate_Outer_1Sample}) without comprising performance of the algorithm. The experiments demonstrate that under most settings \aname~outperforms other state-of-the-art algorithms.  
 
 Next, we evaluate the performance of \aname~on a Hyperparameter optimization task. 

\begin{figure}[b] \centering
\includegraphics[width=.425\linewidth]{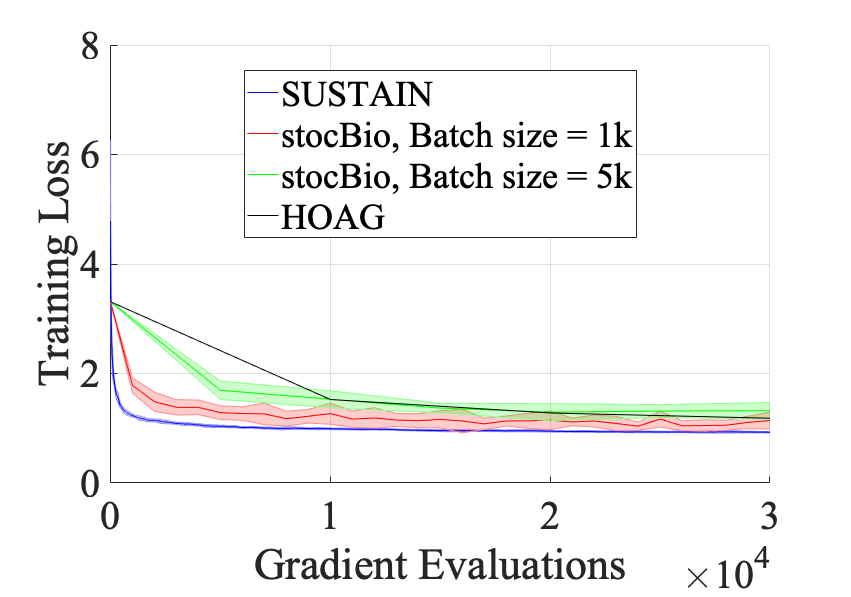} \quad 
\includegraphics[width=.425\linewidth]{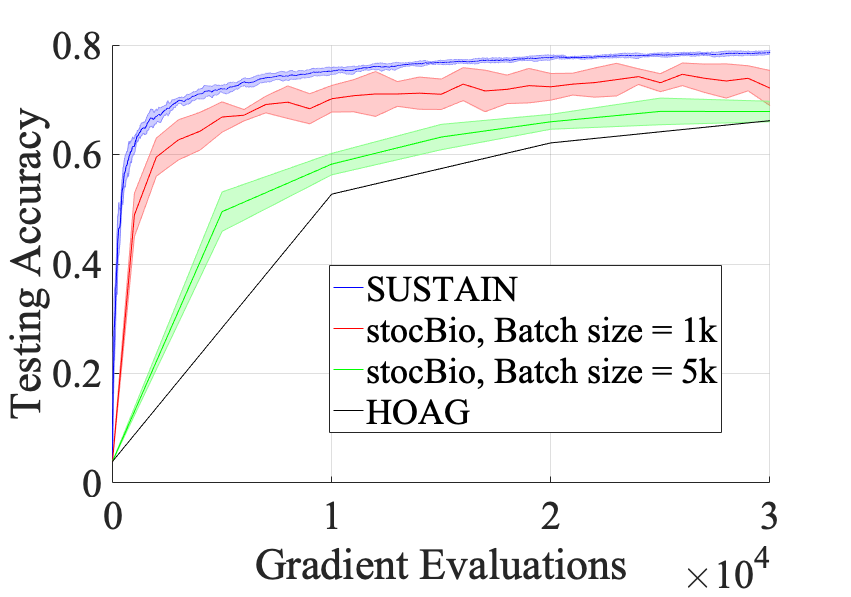}
\caption{\textbf{Hyperparameter optimization}: Data hyper-cleaning task on the {\tt FashionMNIST} dataset. We plot the training loss and testing accuracy against the number of gradients evaluated with corruption rate $p = 0.3$.}
\label{exper: data-cleaning:1}
\end{figure}

\textbf{Hyperparameter optimization.}~~
We consider the data hyper-cleaning task \eqref{eq:clean}, and compare \aname~with several algorithms such as stocBiO \cite{Ji_ProvablyFastBilevel_Arxiv_2020} for different batch size choices, and the HOAG algorithm in \cite{Pedregosa_ICML_2016}. Note that in \cite{Ji_ProvablyFastBilevel_Arxiv_2020}, the authors have shown that stocBio exhibits better practical performance compared with other bilevel optimization algorithms. Importantly, in this section we demonstrate that \aname~performs well under different levels of data corruption.

We consider hyper-cleaning task \eqref{eq:clean} on {\tt Fashion-MNIST} dataset \cite{FashionMNIST_Xiao_2017} with $L(\cdot)$ being the cross-entropy loss (i.e., a data cleaning problem for logistic regression); $ \sigma(x) := \frac{1}{1 + \exp(-x)} $ and $c = 0.001$; see \cite{Shaban_TruncatedBackProp_2019}. The problem is trained on the {\tt FashionMNIST} dataset \cite{FashionMNIST_Xiao_2017} with $50$k, $10$k, and $10$k image samples allocated for training, validation and testing purposes, respectively. We consider two levels of corruption, namely 30$\%$ and 40$\%$ corruption rate. Note that HOAG is a deterministic algorithm and requires full gradient computation at each iteration. In contrast, stocBiO is a stochastic algorithm but it relies on large batch gradient computations. We conduct experiments for two settings where stocBiO uses a batch size of 5000 and 1000 (for both inner and outer updates). Our algorithm \aname~is purely a stochastic algorithm and does not rely on large batch gradient computations. Specifically, \aname~computes two gradients (on a single sample) in each iteration for both inner and outer updates (cf. \eqref{Eq: HybridGradEstimate_Inner} and \eqref{Eq: HybridGradEstimate_Outer}). Since at each outer iteration, the  sample sizes (and gradient computations) accessed by each algorithm are very different, so it is no longer fair to compare the per-iteration performance for different algorithms (this is different compared with the meta learning example in the previous section). Therefore, in this section we compare the training and testing performance of the competing algorithms using the number of total outer gradient computations (which is same as the inner gradient computations) across iterations. Note that for HOAG and stocBiO, the number of samples accessed is same as the number of gradient evaluations, whereas for \aname~we compute two gradients for each sample accessed (cf. \eqref{Eq: HybridGradEstimate_Outer})\footnote{Note that this requirement can be easily relaxed without compromising performance via using the gradient construction \eqref{Eq: HybridGradEstimate_Outer_1Sample}.}. The step sizes for different algorithms are chosen according to their theoretically suggested values. Let the outer iteration be indexed by $t$, for \aname~we choose $\alpha_t = \beta_t = 0.1/(1 + t)^{1/3}$ and tune for $c_{\eta_f}$ and $c_{\eta_g}$ (see Theorem \ref{Thm: Convergence_NC}), for stocBiO and HOAG we select $\alpha_t= d_\alpha$, $\beta_t= d_\beta$ and tune for parameters $d_\alpha$ and $d_\alpha$ in the range $[0, 1]$. 

	\begin{figure}[t]
\centering
\includegraphics[width=0.45\linewidth]{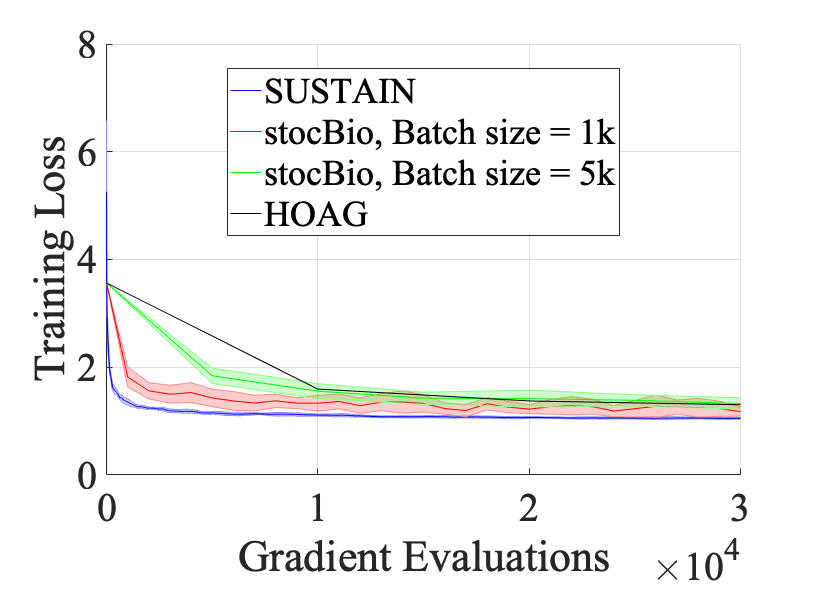}
\quad
\includegraphics[width=0.45\linewidth]{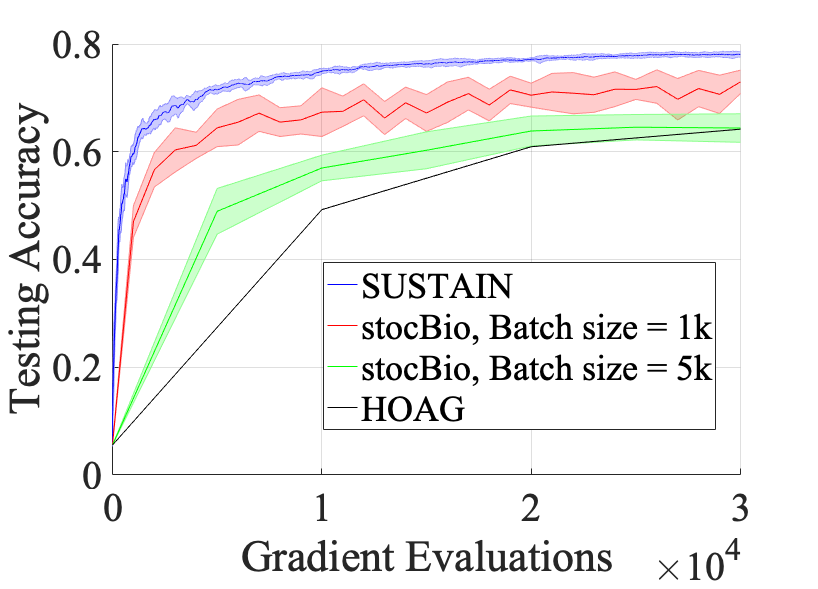}
\caption{Data hyperparameter optimization: Training loss and testing accuracy against the number of gradients evaluated with corruption rate $p = 0.4$. }
\label{exper: data-cleaning:2}  
\end{figure} 

In Figures \ref{exper: data-cleaning:1} and \ref{exper: data-cleaning:2}, we compare the performance of different algorithms when the dataset has a corruption probability of $0.3$ and $0.4$, respectively. The experiments establish that \aname~outperforms HOAG and stocBiO, in terms of the total number of gradient evaluations as well as the number of samples. We remark that relatively large batch sizes used by HOAG and stocBio  result in relatively slow convergence. Moreover, this fast convergence of \aname~results form the single timescale update with reduced variance resulting from the double-momentum variance reduced updates.

	\section{Conclusion and future work} 
	We have developed the \aname~algorithm for unconstrained bilevel optimization with strongly convex lower level subproblems. The proposed algorithm executes on a single-timescale, without the need to use either two-timescale updates, large batch gradients, or double-loop algorithm. 
We showed that \aname~ is both {\it sample} and {\it computation} efficient, because it matches the best-known sample complexity guarantees on single-level problems with non-convex and strongly convex objective functions, while matching the best-known per-iteration computational complexity for the same class of bi-level problems. In the future, we plan to rigorously show the sample complexity lower bound for the considered class of bilevel problems. Further, we plan to develop sample and communication efficient algorithms for a more general class of bilevel problems, such as those with constraints in the lower level problem. 
	
\newpage
\bibliographystyle{IEEEtran}
\bibliography{abrv,References,ref-bi}

	\newpage
	\appendix
	
\section*{Appendix}

	Now we present the proofs of the theoretical results.
	\section{Useful lemmas}
	\label{Sec: Appendix_UsefulLemmas}
	\begin{lem}
		\label{Lem: Lip_Product}
		Consider a collection of functions $\Phi_i : \mathbb{R}^n \to \mathcal{Z}$ with $i = \{1,2,\ldots, k\}$ and $\mathcal{Z} \subseteq \mathbb{R}^{n\times n}$, which satisfy the following assumptions:
		\begin{enumerate}[label=(\roman*)]
			\item There exist $L_i > 0, \; i\in [k]$, such that  
			\begin{align*}
			\|\Phi_i(x)  - \Phi_i(y)\| \leq L_i \|x - y\|, \; \forall~i\in[k], ~x, y \in \mathbb{R}^n.
			\end{align*}
			\item For each $i \in [N]$ and $k \in \mathbb{N}$ we have $\| \Phi_i(x) \| \leq M_i$ for all $x \in \mathbb{R}^n$.
		\end{enumerate}
		Then the following holds for all $x, y \in \mathbb{R}^{n}$:
		\begin{align}
		\label{Eq: Lip_Prod_Fixed_k}
		\bigg\| \prod_{i=1}^k \Phi_i(x) -  \prod_{i=1}^k \Phi_i(y) \bigg\|^2      \leq  k \sum_{i=1}^k \Big(\prod_{j=1, j \neq i}^k M_j \Big)^2  L_i^2 \|x - y \|^2.
		\end{align}
		Moreover, if $k$ is generated uniformly at random from $\{0,1, \ldots, K-1\}$, then the following holds for all $x, y \in \mathbb{R}^{n}$:
		\begin{align}
		\label{Eq: Lip_Prod_Random_k}
		\mathbb{E}_k   \bigg\| \prod_{i=1}^k \Phi_i(x) -  \prod_{i=1}^k \Phi_i(y) \bigg\|^2    \leq K \sum_{i=1}^K \mathbb{E}_k \bigg[ \Big(\prod_{j=1, j \neq i}^k M_j \Big)^2 \bigg]  L_i^2 \|x - y \|^2.
		\end{align}
		Here we use the convention that $\prod_{i=1}^k \Phi_i(x) = I$ if $k = 0$.
	\end{lem}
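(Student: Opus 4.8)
The plan is to rewrite the difference of the two matrix products as a telescoping sum in which exactly one factor is swapped at a time, bound each summand using operator-norm submultiplicativity together with the boundedness and Lipschitz hypotheses, and then pass from a bound on the norm to a bound on the squared norm via Cauchy--Schwarz. First I would invoke the telescoping identity: with $A_i := \Phi_i(x)$ and $B_i := \Phi_i(y)$,
\[
\prod_{i=1}^k \Phi_i(x) - \prod_{i=1}^k \Phi_i(y) = \sum_{i=1}^k \Big( \prod_{j=1}^{i-1} \Phi_j(y) \Big) \big( \Phi_i(x) - \Phi_i(y) \big) \Big( \prod_{j=i+1}^k \Phi_j(x) \Big),
\]
which follows by noting that consecutive terms of the sum cancel (and using the convention $\prod_{i=1}^0(\cdot) = I$ stated in the lemma).

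Next I would take norms on both sides. Applying the triangle inequality, then submultiplicativity $\|AB\| \le \|A\|\,\|B\|$, bounding each unchanged factor by $\|\Phi_j(\cdot)\| \le M_j$ (hypothesis (ii)) and the single swapped factor by $\|\Phi_i(x) - \Phi_i(y)\| \le L_i \|x-y\|$ (hypothesis (i)), each summand is at most $\big( \prod_{j=1,\,j\neq i}^k M_j \big) L_i \|x-y\|$, so that
\[
\Big\| \prod_{i=1}^k \Phi_i(x) - \prod_{i=1}^k \Phi_i(y) \Big\| \le \Big( \sum_{i=1}^k \Big( \prod_{j=1,\,j\neq i}^k M_j \Big) L_i \Big) \|x-y\|.
\]
Squaring both sides and applying Cauchy--Schwarz in the form $\big(\sum_{i=1}^k a_i\big)^2 \le k \sum_{i=1}^k a_i^2$ with $a_i = \big(\prod_{j\neq i} M_j\big) L_i \|x-y\|$ then yields \eqref{Eq: Lip_Prod_Fixed_k}.

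For the random-index bound \eqref{Eq: Lip_Prod_Random_k}, I would take the expectation $\mathbb{E}_k$ of the deterministic inequality just proved. Since $k$ is drawn from $\{0,\dots,K-1\}$ we have $k \le K$ surely, so the leading factor $k$ can be replaced by $K$ and pulled outside the expectation; the random-length inner sum $\sum_{i=1}^k$ is then extended to $\sum_{i=1}^K$ (the extra terms are nonnegative, as all $M_j, L_i \ge 0$), and the expectation is interchanged with the now-fixed finite sum to give the stated form. The main thing to get right is not any analytic estimate but the bookkeeping of the product indices when the summation index $i$ exceeds the realized value of $k$; this is handled cleanly by the convention $\prod_{i=1}^0(\cdot)=I$ and by observing that replacing the indicator $\mathbf{1}\{i \le k\}$ by $1$ only enlarges each nonnegative expectation, which is exactly the slack absorbed when passing from $\sum_{i=1}^k$ to $\sum_{i=1}^K$.
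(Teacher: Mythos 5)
Your proposal is correct and follows essentially the same route as the paper: the one-factor-at-a-time swap decomposition, bounded via triangle inequality, submultiplicativity, and hypotheses (i)--(ii), then squared using $\big(\sum_{i=1}^k a_i\big)^2 \leq k \sum_{i=1}^k a_i^2$, with the random-$k$ case handled by bounding $k \leq K$ almost surely and extending the nonnegative sum from $k$ to $K$ terms before exchanging sum and expectation. The only cosmetic difference is that you write the swap decomposition as an explicit telescoping identity, whereas the paper establishes the same intermediate norm bound \eqref{Eq: Lip_Prod_Claim} by induction on $k$; these are the same argument.
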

	\begin{proof}
		We first prove \eqref{Eq: Lip_Prod_Fixed_k}. To do so we will first show that the following holds for all $x , y \in \mathbb{R}^n$ and $k \in \mathbb{N}$:
		\begin{align}
		\label{Eq: Lip_Prod_Claim}
		\bigg\| \prod_{i=1}^k \Phi_i(x) -  \prod_{i=1}^k \Phi_i(y) \bigg\|      \leq  \sum_{i=1}^k \Big(\prod_{j=1, j \neq i}^k M_j \Big)  L_i \|x - y \|,
		\end{align}
		Then by combining the above result with the identity that  
		\begin{align}
		\label{Eq: Sum_Norms}
		\|z_1 + z_2 + \ldots + z_k\|^2 \leq k \|z_1 \|^2 + k\|z_2\|^2 + \ldots + k\|z_k\|^2, \; \text{for all}~z, \; k\in \mathbb{N},
		\end{align}
		we can conclude the first statement.

		To show \eqref{Eq: Lip_Prod_Claim}, we use an induction argument. The base case for $k = 1$ holds because of the Lipschitz assumption $(i)$ given in the statement of the lemma. Then assuming claim \eqref{Eq: Lip_Prod_Claim} holds for arbitrary $k$, we have for $k+1$  \begin{align*}
	&	\bigg\| \prod_{i=1}^{k+1} \Phi_i(x) -  \prod_{i=1}^{k+1} \Phi_i(y) \bigg\|   =    \bigg\| \prod_{i=1}^{k+1} \Phi_i(x) - \prod_{i=1}^{k} \Phi_i(x) \Phi_{k+1}(y) + \prod_{i=1}^{k} \Phi_i(x) \Phi_{k+1}(y)  -  \prod_{i=1}^{k+1} \Phi_i(y) \bigg\|\\
		& \qquad \qquad \overset{(a)}{\leq} \bigg\| \prod_{i=1}^{k} \Phi_i(x) \bigg\|~ \big\| \Phi_{k+1}(x) - \Phi_{k+1}(y) \big\|  +  \big\|\Phi_{k+1}(y) \big\| ~ \bigg\| \prod_{i=1}^{k} \Phi_i(x)   -  \prod_{i=1}^k \Phi_i(y) \bigg\| \\
		& \qquad \qquad \overset{(b)}{\leq} \bigg( \prod_{j = 1}^k M_j \bigg) L_{k+1} \big\|x - y \big\| +    \sum_{i=1}^k \Big(\prod_{j=1, j \neq i}^{k+1} M_j \Big)  L_i \big\|x - y \big\|\\
		& \qquad \qquad \overset{(c)}{\leq} \sum_{i=1}^{k+1} \Big(\prod_{j=1, j \neq i}^{k+1} M_j \Big)  L_i \|x - y \|.
		\end{align*}
		where $(a)$ follows from the application of the triangle inequality and the Cauchy-Schwartz inequality; the first expression in $(b)$ results from the application of Cauchy-Schwartz inequality and Assumption (i) and (ii) of the statement of the lemma; the second expression in $(b)$ follows from the assumption that claim \eqref{Eq: Lip_Prod_Claim} holds for $k$; $(c)$ follows from combining the two expressions. We conclude that \eqref{Eq: Lip_Prod_Claim} holds for all $k \in \mathbb{N}$. 
		
		Now consider the case when $k$ is chosen uniformly at random from $k \in \{0,1, \ldots, K - 1\}$. First, note from the definition that for $k = 0$ we have $\prod_{i=1}^k \Phi_i(x) = I$. This implies that \eqref{Eq: Lip_Prod_Fixed_k} is also satisfied if we have $k = 0$.  We then have
		\begin{align*}
		\mathbb{E}_k  \bigg\| \prod_{i=1}^k \Phi_i(x) -  \prod_{i=1}^k \Phi_i(y) \bigg\|^2 & \overset{(a)}{\leq} \mathbb{E}_k \bigg[ k \sum_{i=1}^k \Big(\prod_{j=1, j \neq i}^k M_j \Big)^2  \|\Phi_i(x) - \Phi_i(y) \|^2 \bigg] \\
		& \overset{(b)}{\leq} K \sum_{i=1}^K \mathbb{E}_k \bigg[ \Big(\prod_{j=1, j \neq i}^k M_j \Big)^2 \bigg]  \|\Phi_i(x) - \Phi_i(y) \|^2 \\
		& \overset{(c)}{\leq} K \sum_{i=1}^K \mathbb{E}_k \bigg[ \Big(\prod_{j=1, j \neq i}^k M_j \Big)^2 \bigg]  L_i^2 \|x - y\|^2. 
		\end{align*}
		where $(a)$ uses the fact that \eqref{Eq: Lip_Prod_Fixed_k} holds for all $k \in \{0,1, \ldots, K-1\}$ almost surely; $(b)$ follows from the fact that $k \leq K$ almost surely; $(c)$ results from Assumption $(i)$ of the lemma. 
	\end{proof}
	
	\section{Proofs of preliminary lemmas}
	\label{Sec: Appendix_PreliminaryLemmas}
	\subsection{Estimation of the stochastic gradient}
	\label{Subsec: EstimationGrad}
	We construct the stochastic gradient $\bar{\nabla}f(x,y; \bar{\xi})$ as \cite{Ghadimi_BSA_Arxiv_2018, Hong_TTSA_Arxiv_2020}:
	\begin{enumerate}[leftmargin = 0.8 cm]
		\item For $K \in \mathbb{N}$, choose $k \in \{0,1, \ldots, K-1\}$ uniformly at random.
		\item Compute unbiased Hessian approximations $\nabla^2_{xy} g(x,y ; \zeta^{(0)})$ and $\nabla^2_{yy} g(x,y; \zeta^{(i)})$ for $i \in \{1, \ldots, k\}$, where $\{\zeta^{(i)}\}_{i = 0}^k$ are chosen independently.
		\item Compute unbiased gradient approximations $\nabla_x f(x,y ;  \xi)$ and $\nabla_y f(x,y ;  \xi)$ where $ \xi$ is chosen independently of $\{\zeta^{(i)}\}_{i = 0}^k$.
		\item Construct the stochastic gradient estimate $\bar{\nabla} f(x, y ; \bar{\xi})$ with $\bar{\xi}$ denoted as $\bar{\xi} = \{ \xi, \{\zeta^{(i)}\}_{i = 0}^k\}$:
		\begin{align}
		\label{Eq: UpperLevel_StochasticGrad_Appendix}
		& \bar{\nabla} f(x, y ; \bar{\xi}) \nonumber\\
		& = \nabla_x f(x,y ;  \xi) - \nabla^2_{xy} g(x,y ; \zeta^{(0)}) \bigg[ \frac{K}{L_g} \prod_{i = 1}^k \bigg( I - \frac{1}{L_g} \nabla^2_{yy} g(x, y ; \zeta^{(i)}) \bigg) \bigg] \nabla_y f(x, y ;  \xi),
		\end{align}
		with $\prod_{i = 1}^k \bigg( I - \frac{1}{L_g} \nabla^2_{yy} g(x, y ; \zeta^{(i)}) \bigg) = I$ if $k = 0$.
	\end{enumerate}

	Next, we state the result showing that the bias of the stochastic gradient estimate of the upper level objective defined in \eqref{Eq: UpperLevel_StochasticGrad} decays linearly with the number of samples $K$ chosen to approximate the Hessian inverse. 
	\begin{lem}{\cite[Lemma 11]{Hong_TTSA_Arxiv_2020}} 
		\label{Lem: Bias}
		Under Assumptions \ref{Assump: OuterFunction}, \ref{Assump: InnerFunction} and \ref{Assump: StocFn} the stochastic gradient estimate of the upper level objective defined in \eqref{Eq: UpperLevel_StochasticGrad_Appendix}, satisfies 
		\begin{align*}
		\| B(x,y)\| & = \|\bar{\nabla}f(x , y) - \mathbb{E}[\bar{\nabla}f(x , y ; \bar{\xi})] \| \leq \frac{C_{g_{xy}} C_{f_y}}{\mu_g} \bigg( 1 - \frac{\mu_g}{L_g} \bigg)^K, 
		\end{align*}
		where $B(x,y)$ is the bias of the stochastic gradient estimate and $K$ is the number of samples chosen to approximate the Hessian inverse in \eqref{Eq: UpperLevel_StochasticGrad_Appendix}.  Moreover, if we assume, 
		\begin{align*}
		& \qquad \qquad \mathbb{E}[\|\nabla_y f(x, y ;\xi^{(1)}) \|^2] \leq C_y, ~~~~\mathbb{E}[\|\nabla_{xy}^2 g(x, y ;\xi^{(2)}) \|^2] \leq C_g, \\
		& \qquad \qquad \quad \qquad \mathbb{E}\big[ \| \nabla_x f(x,y) - \nabla_x f(x, y ; \xi^{(1)}) \|^2 \big] \leq \sigma_{f_x}^2, \\
		&	  \mathbb{E}\| \nabla_y f(x,y) - \nabla_y f(x, y; \xi^{(1)}) \|^2 \leq \sigma_{f_y}^2, ~~
		\mathbb{E}\| \nabla_{xy}^2 g(x,y) - \nabla_{xy}^2 g(x, y; \xi^{(2)}) \|^2 \leq \sigma_{g_{xy}}^2 
		\end{align*}
		Then we have
		\begin{align*}
		\mathbb{E}_{\bar{\xi}}\big[ \big\| \bar{\nabla} f(x,y) - \mathbb{E}_{\bar{\xi}} [\bar{\nabla} f(x, y; \bar{\xi})] \big\|^2 \big] \leq \sigma_{f_x}^2 + \frac{3}{\mu_g^2} \Big[ (\sigma_{f_y}^2 + C_y^2) \big(\sigma^2_{g_{xy}} + 2 C_{g_{xy}}^2 \big) + \sigma_{f_y}^2 C_{g_{xy}}^2 \Big].
		\end{align*} 
	\end{lem}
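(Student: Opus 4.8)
The statement has two displayed bounds, and both hinge on the fact that the randomized product in \eqref{Eq: UpperLevel_StochasticGrad_Appendix} is built from a \emph{truncated Neumann series} for the Hessian inverse; I note at the outset that the quantity to be controlled in the second display is the variance of the estimator about its mean (the object appearing in Assumption~\ref{Assump: Stochastic Grad}(i)), i.e. $\mathbb{E}_{\bar\xi}\|\bar\nabla f(x,y;\bar\xi)-\mathbb{E}_{\bar\xi}\bar\nabla f(x,y;\bar\xi)\|^2$. For the bias bound, the plan is to take expectations factor by factor. Since $\xi$, $\zeta^{(0)}$, the matrices $\{\zeta^{(i)}\}_{i=1}^k$ and the index $k$ are mutually independent, and $\nabla^2_{yy}g(x,y;\zeta^{(i)})$ are i.i.d.\ and unbiased across $i$, I would first obtain
\begin{align*}
\mathbb{E}_{\bar\xi}[\bar\nabla f(x,y;\bar\xi)] = \nabla_x f(x,y) - \nabla^2_{xy}g(x,y)\Big[\tfrac{1}{L_g}\textstyle\sum_{k=0}^{K-1}\big(I - \tfrac{1}{L_g}\nabla^2_{yy}g(x,y)\big)^k\Big]\nabla_y f(x,y),
\end{align*}
using $\mathbb{E}_k[\tfrac{K}{L_g}M^k]=\tfrac{1}{L_g}\sum_{k=0}^{K-1}M^k$ with $M:=I-\tfrac{1}{L_g}\nabla^2_{yy}g(x,y)$.

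Comparing with the surrogate \eqref{Eq: Estimated_Grad_Deterministic} and invoking the Neumann identity $[\nabla^2_{yy}g]^{-1}=\tfrac{1}{L_g}\sum_{k\ge 0}M^k$, the bias equals $\nabla^2_{xy}g\cdot\big(\tfrac{1}{L_g}\sum_{k\ge K}M^k\big)\cdot\nabla_y f$, and the tail telescopes to $M^K[\nabla^2_{yy}g]^{-1}$. Here Assumption~\ref{Assump: InnerFunction} gives $\mu_g I\preceq\nabla^2_{yy}g\preceq L_g I$, hence $\|M\|\le 1-\mu_g/L_g<1$, which both guarantees convergence of the series and yields $\|M^K[\nabla^2_{yy}g]^{-1}\|\le \tfrac{1}{\mu_g}(1-\mu_g/L_g)^K$. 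Multiplying by the uniform bounds on $\|\nabla^2_{xy}g\|$ and on $\|\nabla_y f\|\le C_{f_y}$ delivers the stated geometric decay.

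For the variance bound I would center the estimator and isolate the $\nabla_x f$ term. Writing $a:=\nabla_x f(x,y;\xi)$, $B_0:=\nabla^2_{xy}g(x,y;\zeta^{(0)})$, $b:=\nabla_y f(x,y;\xi)$ and $P:=\tfrac{K}{L_g}\prod_{i=1}^k(I-\tfrac{1}{L_g}\nabla^2_{yy}g(x,y;\zeta^{(i)}))$, the $a$-contribution is bounded by its variance $\sigma_{f_x}^2$, and the product term is handled by perturbing one factor at a time,
\begin{align*}
B_0 P b - \bar B\,\bar P\,\bar b = (B_0-\bar B)Pb + \bar B(P-\bar P)b + \bar B\,\bar P(b-\bar b),
\end{align*}
where bars denote means and $\bar P=\mathbb{E}[P]=\tfrac{1}{L_g}\sum_{k=0}^{K-1}M^k$ satisfies $\|\bar P\|\le 1/\mu_g$. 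Applying $\|u_1+u_2+u_3\|^2\le 3(\|u_1\|^2+\|u_2\|^2+\|u_3\|^2)$ accounts for the factor $3$, and the independence of $B_0$, $P$, and $b$ lets me factor each expectation into products of second moments; substituting $\mathbb{E}\|B_0-\bar B\|^2\le\sigma_{g_{xy}}^2$, $\mathbb{E}\|b-\bar b\|^2\le\sigma_{f_y}^2$, $\mathbb{E}\|b\|^2\le C_y$, $\|\bar B\|^2\le C_{g_{xy}}^2$, and the relevant moment bounds on $P$ then assembles into $\tfrac{3}{\mu_g^2}\big[(\sigma_{f_y}^2+C_y^2)(\sigma_{g_{xy}}^2+2C_{g_{xy}}^2)+\sigma_{f_y}^2 C_{g_{xy}}^2\big]$.

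The main obstacle is the control of the moments of the randomized Hessian-inverse estimator $P$: whereas $\|\bar P\|\le 1/\mu_g$ is immediate, the terms retaining a \emph{random} $P$ require bounding quantities such as $\mathbb{E}\|P\|^2$ via $\|P\|\le\tfrac{K}{L_g}(1-\mu_g/L_g)^k$ followed by averaging the geometric factor over $k\sim\mathcal U\{0,\dots,K-1\}$; tracking how the normalization $K/L_g$ interacts with $\sum_k(1-\mu_g/L_g)^{2k}$ is the delicate book-keeping that pins down the final $\mu_g$-dependence. A second, minor, subtlety is that $a$ and $b$ share the sample $\xi$, so extracting the $\sigma_{f_x}^2$ term with coefficient one is cleanest by first conditioning on $\xi$ and then invoking independence of the Hessian samples. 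The remaining manipulations—triangle and Cauchy--Schwarz inequalities together with substitution of the constants from Assumptions~\ref{Assump: OuterFunction}--\ref{Assump: StocFn}—are routine.
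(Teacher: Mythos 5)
First, a point of reference: the paper never proves this lemma --- it is imported verbatim from \cite[Lemma 11]{Hong_TTSA_Arxiv_2020}, so there is no in-paper argument to compare yours against, and your attempt has to stand on its own. Your bias argument does: independence factorizes the expectation, $\mathbb{E}_k\big[\tfrac{K}{L_g}M^k\big]=\tfrac{1}{L_g}\sum_{k=0}^{K-1}M^k$ with $M=I-\tfrac{1}{L_g}\nabla^2_{yy}g(x,y)$, the comparison with $[\nabla^2_{yy}g]^{-1}=\tfrac{1}{L_g}\sum_{k\ge 0}M^k$ leaves exactly the Neumann tail $M^K[\nabla^2_{yy}g]^{-1}$, and $\mu_g I\preceq \nabla^2_{yy}g\preceq L_g I$ gives the stated geometric factor. (One cosmetic caveat: Assumption~\ref{Assump: InnerFunction}(v) bounds $\|\nabla^2_{xy}g\|^2$ by $C_{g_{xy}}$, so your argument literally produces $\sqrt{C_{g_{xy}}}$; the stated constant reflects the paper's own inconsistent use of that symbol, not an error of yours.)

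The variance half, however, has a genuine gap, and it sits precisely at the step you defer to ``delicate book-keeping.'' Two of your three pieces, $(B_0-\bar B)Pb$ and $\bar B(P-\bar P)b$, retain the \emph{random} product $P=\tfrac{K}{L_g}\prod_{i=1}^{k}\big(I-\tfrac{1}{L_g}\nabla^2_{yy}g(x,y;\zeta^{(i)})\big)$ and therefore require its \emph{second} moment. Carrying out the averaging you describe yields
\begin{align*}
\mathbb{E}\|P\|^2 \;\le\; \frac{K^2}{L_g^2}\cdot\frac{1}{K}\sum_{k=0}^{K-1}\Big(1-\frac{\mu_g}{L_g}\Big)^{2k} \;\le\; \frac{K}{\mu_g(2L_g-\mu_g)},
\end{align*}
a bound that grows \emph{linearly in $K$}; only the first moment obeys the $K$-free estimate $\mathbb{E}\|P\|\le 1/\mu_g$, and Jensen's inequality goes the wrong way to convert one into the other. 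This factor of $K$ is not removable by sharper bookkeeping: take $\dl=1$, $g(x,y)=\tfrac{\mu}{2}y^2+cxy$ and $f(x,y)=C_{f_y}\,y$, all deterministic. Then the estimator collapses to $-c\,C_{f_y}\tfrac{K}{L_g}(1-\mu/L_g)^{k}$ with $k\sim\mathcal{U}\{0,\dots,K-1\}$, whose variance grows linearly in $K$ and eventually exceeds any $K$-independent constant, in particular the one claimed. So your outline, executed honestly, proves a bound of the form $\sigma_{f_x}^2+\mathcal{O}(K)\cdot(\cdots)$ --- harmless for the paper's eventual choice $K=\mathcal{O}(\log T)$, but not the stated inequality. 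A second, smaller, defect: conditioning on $\xi$ does not annihilate the cross term $\mathbb{E}\big\langle \nabla_x f(x,y;\xi)-\nabla_x f(x,y),\,\bar B\bar P\big(\nabla_y f(x,y;\xi)-\nabla_y f(x,y)\big)\big\rangle$, because $\nabla_x f(\cdot;\xi)$ and $\nabla_y f(\cdot;\xi)$ share the sample $\xi$; hence the coefficient $1$ on $\sigma_{f_x}^2$ also does not follow from your decomposition without an additional Cauchy--Schwarz term that the stated bound does not contain.
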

Lemma \ref{Lem: Bias} implies that the bias $B(x,y)$ can be made to satisfy $\|B(x,y)\| \leq \epsilon$ with only 
	$$K = (L_g/\mu_g)\log (C_{g_{xy}} C_{f_y} / \mu_g \epsilon)$$ stochastic Hessian samples of $\nabla^2_{yy} g(x, y)$.
	
	\subsection{Lipschitz continuity of gradient estimate}
	\begin{lem}[Lipschitzness of Stochastic Gradient Estimate]
		\label{Lem: Lip_GradEst_Appendix}
		If the stochastic functions $f(x, y;\xi)$ and $g(x,y; \zeta)$ satisfy Assumptions \ref{Assump: OuterFunction}, \ref{Assump: InnerFunction} and \ref{Assump: StocFn}, then we have 
		\begin{enumerate}[label=(\roman*)]
			\item For a fixed $y \in \mathbb{R}^{\du}$ 
			\begin{align*}
			\mathbb{E}_{\bar{\xi}}   \| \bar{\nabla} f(x_1, y ; \bar{\xi}) - \bar{\nabla} f(x_2, y ; \bar{\xi}) \|^2 \leq L_K^2 \|x_1 - x_2\|^2,~\forall~x_1,x_2 \in \mathbb{R}^{\du}.
			\end{align*}
			\item For a fixed $x \in \mathbb{R}^{\du}$ 
			\begin{align*}
			\mathbb{E}_{\bar{\xi}} \| \bar{\nabla} f(x, y_1 ; \bar{\xi})  - \bar{\nabla} f(x, y_2 ; \bar{\xi})   \|^2 \leq L_K^2 \|y_1 - y_2\|^2,~\forall~y_1,y_2 \in \mathbb{R}^{\du}.
			\end{align*}
		\end{enumerate}
		In the above expressions, $L_K > 0$ is defined as:
		\begin{align*}
		L_K^2 = 2 L_{f_x}^2 + 6 C_{g_{xy}}^2 L_{f_y}^2 \bigg( \frac{K}{2 \mu_g L_g - \mu_g^2}\bigg)& + 6 C_{f_y}^2 L_{g_{xy}}^2 \bigg( \frac{K}{2 \mu_g L_g - \mu_g^2} \bigg) \\
		& \quad + 6 C_{g_{xy}}^2 C_{f_y}^2 \frac{K^3 L_g^2}{(L_g - \mu_g)^2 (2 \mu_g L_g - \mu_g^2)},
		\end{align*}
		and where $K$ is the number of samples required to construct the stochastic approximation of $\bar{\nabla}f$ (see \eqref{Eq: UpperLevel_StochasticGrad_Appendix} above).
	\end{lem}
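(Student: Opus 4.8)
The plan is to prove part (i) in detail; part (ii) then follows by an identical argument in which every Lipschitz constant is taken with respect to $y$ instead of $x$, and both parts share the same bounding constants because Assumptions~\ref{Assump: OuterFunction}--\ref{Assump: StocFn} impose the Lipschitz and boundedness conditions jointly in $(x,y)$. Throughout I would abbreviate the matrix factor as $P(x) := \frac{K}{L_g}\prod_{i=1}^{k}\big(I - \frac{1}{L_g}\nabla^2_{yy} g(x,y;\zeta^{(i)})\big)$, so that the second summand of \eqref{Eq: UpperLevel_StochasticGrad_Appendix} reads $T(x) := \nabla^2_{xy} g(x,y;\zeta^{(0)})\,P(x)\,\nabla_y f(x,y;\xi)$.

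First I would split the difference via $\|a-b\|^2 \le 2\|a\|^2 + 2\|b\|^2$ into $2\|\nabla_x f(x_1,y;\xi) - \nabla_x f(x_2,y;\xi)\|^2 + 2\|T(x_1)-T(x_2)\|^2$. The first piece is bounded directly by $2 L_{f_x}^2 \|x_1-x_2\|^2$ via Assumption~\ref{Assump: OuterFunction}(i), which accounts for the leading $2L_{f_x}^2$ in $L_K^2$. For the second piece I would telescope across the three factors, $T(x_1)-T(x_2) = (\nabla^2_{xy} g(x_1) - \nabla^2_{xy} g(x_2))P(x_1)\nabla_y f(x_1) + \nabla^2_{xy} g(x_2)(P(x_1)-P(x_2))\nabla_y f(x_1) + \nabla^2_{xy} g(x_2)P(x_2)(\nabla_y f(x_1) - \nabla_y f(x_2))$, and apply $\|z_1+z_2+z_3\|^2 \le 3(\|z_1\|^2+\|z_2\|^2+\|z_3\|^2)$. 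Combined with the outer factor $2$, this explains the common multiplier $6$ appearing on the remaining three terms of $L_K^2$.

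Each telescoped term is then controlled by pairing the single Lipschitz factor with the almost-sure uniform bounds on the remaining factors—the bounds on $\|\nabla^2_{xy} g\|$ and $\|\nabla_y f\|$ from Assumptions~\ref{Assump: InnerFunction}(v) and \ref{Assump: OuterFunction}(ii)—together with two expectation bounds for $P$. The simpler one, $\mathbb{E}_k\|P(x)\|^2 \le K/(2\mu_g L_g - \mu_g^2)$, follows from the operator-norm estimate $\|I - \frac{1}{L_g}\nabla^2_{yy} g\| \le 1-\mu_g/L_g$ (a consequence of $\mu_g I \preceq \nabla^2_{yy} g \preceq L_g I$ under Assumptions~\ref{Assump: InnerFunction}(ii)--(iii)) and summing the geometric series $\frac1K\sum_{k=0}^{K-1}(1-\mu_g/L_g)^{2k}$; this produces the factor $K/(2\mu_g L_g - \mu_g^2)$ in the second and third terms of $L_K^2$.

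The crux—and the step I expect to require the most care—is the expectation bound for the difference of the truncated Neumann-type products, $\mathbb{E}_k\|P(x_1)-P(x_2)\|^2$. Here I would invoke Lemma~\ref{Lem: Lip_Product} with $\Phi_i(x) = I - \frac{1}{L_g}\nabla^2_{yy} g(x,y;\zeta^{(i)})$, for which $L_i = L_{g_{yy}}/L_g$ by Assumption~\ref{Assump: InnerFunction}(iv) and $M_i = 1-\mu_g/L_g$. The random-index estimate \eqref{Eq: Lip_Prod_Random_k} reduces matters to evaluating $\mathbb{E}_k[k\,(1-\mu_g/L_g)^{2(k-1)}]$; bounding $k \le K$, summing the resulting geometric series, and reinserting the prefactor $K^2/L_g^2$ coming from $P$ yields both the $K^3$ growth and the denominator $(L_g-\mu_g)^2(2\mu_g L_g - \mu_g^2)$ of the final term. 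Tracking the uniformly random truncation index $k$ faithfully through this expectation is the principal technical obstacle; once the four pieces are in hand, summing them assembles exactly $L_K^2\|x_1-x_2\|^2$.
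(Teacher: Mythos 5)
Your proposal is correct and follows essentially the same route as the paper's proof: the same factor-of-two split isolating the $\nabla_x f$ difference, the same three-term telescoping of the product term (yielding the multiplier $6$), the same geometric-series bounds on $\mathbb{E}_k[(1-\mu_g/L_g)^{2k}]$ producing the $K/(2\mu_g L_g - \mu_g^2)$ factors, and the same invocation of the random-index estimate \eqref{Eq: Lip_Prod_Random_k} of Lemma~\ref{Lem: Lip_Product} for $\mathbb{E}_k\|P(x_1)-P(x_2)\|^2$, which gives the $K^3$ term. (Incidentally, your derivation yields $L_{g_{yy}}^2$ in the numerator of that last term, matching the paper's proof rather than the $L_g^2$ typo in the lemma statement.)
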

	\begin{proof}
		We prove only statement $(i)$ of the lemma, the proof of $(ii)$ follows from a similar argument. From the definition of $\bar{\nabla} f(x_1, y ; \bar{\xi})$ we have for $x_1, x_2 \in \mathbb{R}^{\du}$ and $y \in \mathbb{R}^{\du}$
		\begin{align}
		& \| \bar{\nabla} f(x_1, y ; \bar{\xi}) - \bar{\nabla} f(x_2, y ; \bar{\xi}) \|^2 \nonumber \\
		& \overset{(a)}{\leq} 2   \big\| \nabla_x f(x_1,y ;  \xi) - \nabla_x f(x_2,y ;  \xi) \big\|^2 \nonumber\\
		& +  2   \bigg\| \nabla^2_{xy} g(x_1,y ; \zeta^{(0)}) \bigg[ \frac{K}{L_g} \prod_{i = 1}^k \bigg( I - \frac{1}{L_g} \nabla^2_{yy} g(x_1, y ; \zeta^{(i)}) \bigg) \bigg] \nabla_y f(x_1, y ;  \xi) \nonumber\\
		& \qquad\qquad  - \nabla^2_{xy} g(x_2,y ; \zeta^{(0)}) \bigg[ \frac{K}{L_g} \prod_{i = 1}^k \bigg( I - \frac{1}{L_g} \nabla^2_{yy} g(x_2, y ; \zeta^{(i)}) \bigg) \bigg] \nabla_y f(x_2, y ;  \xi) \bigg\|^2 \nonumber \\
		& \overset{(b)}{\leq} 2 L_{f_x}^2  \big\| x_1 - x_2\big\|^2 \nonumber\\
		& +  2   \bigg\| \nabla^2_{xy} g(x_1,y ; \zeta^{(0)}) \bigg[ \frac{K}{L_g} \prod_{i = 1}^k \bigg( I - \frac{1}{L_g} \nabla^2_{yy} g(x_1, y ; \zeta^{(i)}) \bigg) \bigg] \nabla_y f(x_1, y ;  \xi) \nonumber\\
		& \qquad\qquad  - \nabla^2_{xy} g(x_2,y ; \zeta^{(0)}) \bigg[ \frac{K}{L_g} \prod_{i = 1}^k \bigg( I - \frac{1}{L_g} \nabla^2_{yy} g(x_2, y ; \zeta^{(i)}) \bigg) \bigg] \nabla_y f(x_2, y ;  \xi) \bigg\|^2,
		\label{Eq: Lip_GradEst_1st}
		\end{align}
		where inequality $(a)$ follows from the definition of $\bar{\nabla} f(x_1, y ; \bar{\xi})$ and \eqref{Eq: Sum_Norms}; inequality $(b)$ follows from the Lipschitz-ness
		Assumption \ref{Assump: OuterFunction}--(ii)
		made for stochastic upper level objective. The variable $k \in \{0, \ldots, K-1 \}$ above is a random variable define in Section \ref{Subsec: EstimationGrad} above. Let us consider the second term of \eqref{Eq: Lip_GradEst_1st} above, we have
		\begingroup
		\allowdisplaybreaks
		\begin{align*}
		&  \bigg\| \nabla^2_{xy} g(x_1,y ; \zeta^{(0)}) \bigg[ \frac{K}{L_g} \prod_{i = 1}^k \bigg( I - \frac{1}{L_g} \nabla^2_{yy} g(x_1, y ; \zeta^{(i)}) \bigg) \bigg] \nabla_y f(x_1, y ;  \xi) \\
		& \qquad\qquad \qquad \qquad - \nabla^2_{xy} g(x_2,y ; \zeta^{(0)}) \bigg[ \frac{K}{L_g} \prod_{i = 1}^k \bigg( I - \frac{1}{L_g} \nabla^2_{yy} g(x_2, y ; \zeta^{(i)}) \bigg) \bigg] \nabla_y f(x_2, y ;  \xi) \bigg\|^2  \\
		& \overset{(a)}{\leq} 3 C_{g_{xy}}^2 \frac{K^2}{L_g^2}\bigg(1 - \frac{\mu_g}{L_g} \bigg)^{2k} \| \nabla_y f(x_1, y;  \xi) - \nabla_y f(x_2, y;  \xi) \|^2 \\
		& \qquad   + 3  C_{f_y}^2 \frac{K^2}{L_g^2}\bigg(1 - \frac{\mu_g}{L_g} \bigg)^{2k}  \| \nabla_{xy}^2 g(x_1, y ; \zeta^{(0)}) - \nabla_{xy}^2 g(x_2, y ; \zeta^{(0)}) \|^2  \\
		& \qquad   + 3 C_{g_{xy}}^2 C_{f_y}^2   \bigg\|\frac{K}{L_g} \prod_{i = 1}^k \bigg( I - \frac{1}{L_g} \nabla^2_{yy} g(x_1, y ; \zeta^{(i)}) \bigg) - \frac{K}{L_g} \prod_{i = 1}^k \bigg( I - \frac{1}{L_g} \nabla^2_{yy} g(x_2, y ; \zeta^{(i)}) \bigg) \bigg\|^2 \\
		& \overset{(b)}{\leq} 3 C_{g_{xy}}^2 \frac{K^2}{L_g^2}\bigg(1 - \frac{\mu_g}{L_g} \bigg)^{2k}  L_{f_y}^2 \|x_1 - x_2\|^2 + 3 C_{f_y}^2 \frac{K^2}{L_g^2}\bigg(1 - \frac{\mu_g}{L_g} \bigg)^{2k}  L_{g_{xy}}^2 \| x_1 - x_2\|^2\\
		& \qquad \quad   + 3 C_{g_{xy}}^2 C_{f_y}^2 \frac{K^2}{L_g^2}    \bigg\|  \prod_{i = 1}^k \bigg( I - \frac{1}{L_g} \nabla^2_{yy} g(x_1, y ; \zeta^{(i)}) \bigg) -   \prod_{i = 1}^k \bigg( I - \frac{1}{L_g} \nabla^2_{yy} g(x_2, y ; \zeta^{(i)}) \bigg) \bigg\|^2,
		\end{align*}
		\endgroup
		where inequality $(a)$ follows from \eqref{Eq: Lip_Prod_Fixed_k} in Lemma \ref{Lem: Lip_Product}, Assumption \ref{Assump: OuterFunction}--(iii) and Assumption \ref{Assump: InnerFunction}--(ii)(iii)(vi); inequality $(b)$ follows from the Lipschitz continuity Assumption \ref{Assump: OuterFunction}--(ii) and Assumption \ref{Assump: InnerFunction}--(v) made for the stochastic upper and lower level objectives. On both sides taking expectation w.r.t $k$, we get:
		\begin{align}
		&  \mathbb{E}_k \bigg\| \nabla^2_{xy} g(x_1,y ; \zeta^{(0)}) \bigg[ \frac{K}{L_g} \prod_{i = 1}^k \bigg( I - \frac{1}{L_g} \nabla^2_{yy} g(x_1, y ; \zeta^{(i)}) \bigg) \bigg] \nabla_y f(x_1, y ;  \xi) \nonumber \\
		& \qquad\qquad \qquad  - \nabla^2_{xy} g(x_2,y ; \zeta^{(0)}) \bigg[ \frac{K}{L_g} \prod_{i = 1}^k \bigg( I - \frac{1}{L_g} \nabla^2_{yy} g(x_2, y ; \zeta^{(i)}) \bigg) \bigg] \nabla_y f(x_2, y ;  \xi) \bigg\|^2 \nonumber \\
		& \leq 3 C_{g_{xy}}^2 \frac{K^2}{L_g^2} \mathbb{E}_k \bigg[ \bigg(1 - \frac{\mu_g}{L_g} \bigg)^{2k} \bigg] L_{f_y}^2 \|x_1 - x_2\|^2 + 3 C_{f_y}^2 \frac{K^2}{L_g^2} \mathbb{E}_k \bigg[ \bigg(1 - \frac{\mu_g}{L_g} \bigg)^{2k} \bigg]  L_{g_{xy}}^2 \| x_1 - x_2\|^2 \nonumber \\
		& \qquad   + 3 C_{g_{xy}}^2 C_{f_y}^2 \frac{K^2}{L_g^2}   \mathbb{E}_k   \bigg\|  \prod_{i = 1}^k \bigg( I - \frac{1}{L_g} \nabla^2_{yy} g(x_1, y ; \zeta^{(i)}) \bigg) -   \prod_{i = 1}^k \bigg( I - \frac{1}{L_g} \nabla^2_{yy} g(x_2, y ; \zeta^{(i)}) \bigg) \bigg\|^2     \nonumber \\
		& \overset{(a)}{\leq} 3 C_{g_{xy}}^2 L_{f_y}^2 \bigg(\frac{K}{2 \mu_g L_g - \mu_g^2} \bigg)    \|x_1 - x_2\|^2 + 3 C_{f_y}^2 L_{g_{xy}}^2 \bigg(\frac{K}{ 2 \mu_g L_g - \mu_g^2}\bigg)     \| x_1 - x_2\|^2 \nonumber\\
		& \qquad    + 3 C_{g_{xy}}^2 C_{f_y}^2 \frac{K^2}{L_g^2}   \mathbb{E}_k   \bigg\|  \prod_{i = 1}^k \bigg( I - \frac{1}{L_g} \nabla^2_{yy} g(x_1, y ; \zeta^{(i)}) \bigg) -   \prod_{i = 1}^k \bigg( I - \frac{1}{L_g} \nabla^2_{yy} g(x_2, y ; \zeta^{(i)}) \bigg) \bigg\|^2 ,
		\label{Eq: Lip_GradEst_2nd}
		\end{align}
		where $(a)$ follows from the fact that we have:
		\begin{align*}
		\mathbb{E}_k \bigg[ \bigg( 1 - \frac{\mu_g}{L_g} \bigg)^{2k} \bigg] = \frac{1}{K} \sum_{k = 0}^{K - 1} \bigg( 1 - \frac{\mu_g}{L_g} \bigg)^{2k} \leq \frac{1}{K} \bigg( \frac{L_g^2}{2 \mu_g L_g - \mu_g^2} \bigg),
		\end{align*}
		where the first equality above follows from the fact that $k \in \{0, 1, \ldots, K-1 \}$ is chosen uniformly at random and the second equality results from the sum of a geometric progression. 
		
		Finally, considering the last term of \eqref{Eq: Lip_GradEst_2nd}, we have
		\begin{align}
		&   \mathbb{E}_{k} \bigg\|  \prod_{i = 1}^k \bigg( I - \frac{1}{L_g} \nabla^2_{yy} g(x_1, y ; \zeta^{(i)}) \bigg) -   \prod_{i = 1}^k \bigg( I - \frac{1}{L_g} \nabla^2_{yy} g(x_2, y ; \zeta^{(i)}) \bigg) \bigg\|^2 \nonumber\\
		& \qquad \qquad  \overset{(a)}{\leq} K \sum_{i = 1}^K \mathbb{E}_k \bigg[ \bigg( 1 - \frac{\mu_g}{L_g} \bigg)^{2(k - 1)} \bigg] \frac{1}{L_g^2} \big\| \nabla_{yy}^2 g(x_1, y ; \zeta^{(i)}) - \nabla_{yy}^2 g(x_2, y ; \zeta^{(i)}) \big\|^2 \nonumber\\
		& \qquad \qquad  \overset{(b)}{\leq}  \bigg( \frac{L_g^2}{(L_g - \mu_g)^2} \bigg) \bigg( \frac{1}{2 \mu_g L_g - \mu_g^2} \bigg) \sum_{i = 1}^K \big\| \nabla_{yy}^2 g(x_1, y ; \zeta^{(i)}) - \nabla_{yy}^2 g(x_2, y ; \zeta^{(i)}) \big\|^2 \nonumber\\
		& \qquad \qquad  \overset{(c)}{\leq}
		\frac{K L_g^2 L_{g_{yy}}^2}{(L_g - \mu_g)^2 (2 \mu_g L_g - \mu_g^2)}    \|x_1 - x_2\|^2,
		\label{Eq: Lip_GradEst_3rd}
		\end{align}
		where $(a)$ follows from the application of \eqref{Eq: Lip_Prod_Random_k} in Lemma \ref{Lem: Lip_Product} along with Assumption \ref{Assump: InnerFunction}--(ii)(iii); inequality $(b)$ utilizes
		\begin{align*}
		\mathbb{E}_k \bigg[ \bigg( 1 - \frac{\mu_g}{L_g} \bigg)^{2(k - 1)} \bigg] = \frac{1}{K} \sum_{k = 0}^{K - 1} \bigg( 1 - \frac{\mu_g}{L_g} \bigg)^{2(k - 1)} \leq \frac{1}{K} \bigg( \frac{L_g^2}{(L_g - \mu_g)^2}\bigg) \bigg( \frac{L_g^2}{2 \mu_g L_g - \mu_g^2} \bigg), 
		\end{align*}
		where the first equality above again utilizes the fact that $k \in \{0, 1, \ldots, K-1 \}$ is chosen uniformly at random and the second equality results from the sum of a geometric progression; inequality $(c)$ utilizes Assumption \ref{Assump: InnerFunction}--(v) made for stochastic lower level objective.
		
		Finally, taking expectation in \eqref{Eq: Lip_GradEst_1st} and substituting the expressions obtained in \eqref{Eq: Lip_GradEst_2nd} and \eqref{Eq: Lip_GradEst_3rd} in \eqref{Eq: Lip_GradEst_1st}, we obtain
		\begin{align*}
		\mathbb{E}   \| \bar{\nabla} f(x_1, y ; \bar{\xi}) - \bar{\nabla} f(x_2, y ; \bar{\xi}) \|^2 \leq L_K^2 \|x_1 - x_2\|^2,
		\end{align*}
		where $L_K^2$ defined as:
		\begin{align*}
		L_K^2 \coloneqq 2 L_{f_x}^2 + 6 C_{g_{xy}}^2 L_{f_y}^2 \bigg( \frac{K}{2 \mu_g L_g - \mu_g^2}\bigg) & + 6 C_{f_y}^2 L_{g_{xy}}^2 \bigg( \frac{K}{2 \mu_g L_g - \mu_g^2} \bigg) \\
		& \qquad  + 6 C_{g_{xy}}^2 C_{f_y}^2 \frac{K^3 L_{g_{yy}}^2}{(L_g - \mu_g)^2 (2 \mu_g L_g - \mu_g^2)}.
		\end{align*}
		Statement $(i)$ of the Lemma is proved.
		
		The proof of the statement $(ii)$ follows the same procedure, so it is omitted. 
	\end{proof}
	
	\section{Proof of Theorem \ref{Thm: Convergence_NC}: smooth (possibly non-convex) outer objective} \label{Sec: Appendix_Thm_NC}
	First, we consider the descent achieved by the outer objective in consecutive iterates generated by the Algorithm \ref{Algo: Accelerated_STSA} when the outer problem is smooth and is possibly non-convex. We define the following constants for the stepsize parameters:
	\begin{equation} \label{eq:step_param}
	\begin{split}
& w = \max \Big\{ 2 , ~27 L_f^3, ~8 L_{\mu_g}^3   c_\beta^3, ~(\mu_g + L_g)^3   c_\beta^3,~   c_{\eta_f}^{3/2},~  c_{\eta_g}^{3/2}   \Big\}, \quad c_\beta = \frac{6 \sqrt{2}  L_y L}{L_{\mu_g}}, \\
& c_{\eta_f} = \frac{1}{3 L_f} + \max \bigg\{ 36 L_K^2 , \frac{4 L_K^2  L_{\mu_g} (\mu_g + L_g) c_\beta^2}{L^2} \bigg\}, \\
& c_{\eta_g} = \frac{1}{3  L_f} + 8 L_g^2 c_\beta^2 + \bigg[ \frac{8 L^2}{L_{\mu_g}^2} + \frac{2L^2}{L_{\mu_g} (\mu_g + L_g)} \bigg] \max\bigg\{36 L_g^2 , \frac{4 L_g^2 L_{\mu_g} (\mu_g + L_g) c_\beta^2}{L^2} \bigg\},
\end{split}
\end{equation}
where we have defined $L_{\mu_g} = \frac{\mu_g L_g}{\mu_g + L_g}$. 
	
	\subsection{Descent in the function value}  
	\begin{lem}
		\label{Lem: Smoothness_l_Main}
		For non-convex and smooth $\ell(\cdot)$, with $e_t^f$ defined as: $e_t^f \coloneqq h_t^f - \bar{\nabla} f(x_t,y_t) - B_t$,
		the consecutive iterates of Algorithm \ref{Algo: Accelerated_STSA} satisfy:
		\begin{align*}
		\mathbb{E} [ \ell(x_{t+1}) ] & \leq \mathbb{E} \Big[ \ell(x_t) - \frac{\alpha_{t}}{2} \|\nabla \ell(x_t)\|^2 - \frac{\alpha_{t}}{2} (1 - \alpha_t L_f) \|h_t^f\|^2   + \alpha_{t} \|e_t^f\|^2 \\
		& \qquad \qquad \qquad  \qquad \qquad \qquad \qquad \qquad  + 2 \alpha_{t} L^2 \|y_{t} -   y^\ast(x_t)\|^2  + 2 \alpha_{t} \|B_t \|^2 \Big]. 
		\end{align*}
		for all $t \in \{0,1, \ldots, T-1\}$, where the expectation is w.r.t. the stochasticity of the algorithm. 
	\end{lem}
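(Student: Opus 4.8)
The plan is to invoke the $L_f$-smoothness of $\ell$ (the third inequality of Lemma~\ref{Lem: Lip_Ghadhimi}) together with the $x$-update $x_{t+1} = x_t - \alpha_t h_t^f$ from Algorithm~\ref{Algo: Accelerated_STSA}, and then to convert the resulting inner product into the desired squared-norm terms via elementary identities. Everything below is carried out pathwise; the expectation is taken only at the very end, so no unbiasedness of $h_t^f$ is required.

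First I would write the descent inequality
\begin{equation*}
\ell(x_{t+1}) \leq \ell(x_t) - \alpha_t \langle \nabla\ell(x_t), h_t^f \rangle + \frac{\alpha_t^2 L_f}{2}\|h_t^f\|^2,
\end{equation*}
obtained by plugging $x_{t+1}-x_t = -\alpha_t h_t^f$ into the quadratic upper bound from $L_f$-smoothness. I would then apply the polarization identity $-\langle a,b\rangle = \tfrac12(\|a-b\|^2 - \|a\|^2 - \|b\|^2)$ with $a = \nabla\ell(x_t)$ and $b = h_t^f$. This produces the term $-\tfrac{\alpha_t}{2}\|\nabla\ell(x_t)\|^2$ and, after combining the two $\|h_t^f\|^2$ contributions, the term $-\tfrac{\alpha_t}{2}(1-\alpha_t L_f)\|h_t^f\|^2$, leaving a residual $\tfrac{\alpha_t}{2}\|\nabla\ell(x_t) - h_t^f\|^2$ to be controlled.

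The crux is to bound this residual by the three error sources in the statement. Using the definition $e_t^f = h_t^f - \bar{\nabla} f(x_t,y_t) - B_t$, I would decompose
\begin{equation*}
\nabla\ell(x_t) - h_t^f = \big(\nabla\ell(x_t) - \bar{\nabla} f(x_t,y_t)\big) - B_t - e_t^f,
\end{equation*}
and apply $\|u+v\|^2 \leq 2\|u\|^2 + 2\|v\|^2$ twice: once to peel off $e_t^f$, and once more to separate the bias $B_t$ from the approximation gap $\nabla\ell(x_t) - \bar{\nabla} f(x_t,y_t)$. The latter gap is exactly what the first inequality of Lemma~\ref{Lem: Lip_Ghadhimi} controls, giving $\|\nabla\ell(x_t) - \bar{\nabla} f(x_t,y_t)\|^2 \leq L^2 \|y_t - y^\ast(x_t)\|^2$. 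Tracking the constants $2$ and $4$ that emerge from the two splittings yields $\tfrac{\alpha_t}{2}\|\nabla\ell(x_t)-h_t^f\|^2 \leq \alpha_t\|e_t^f\|^2 + 2\alpha_t L^2\|y_t - y^\ast(x_t)\|^2 + 2\alpha_t\|B_t\|^2$, which are precisely the last three terms of the claim.

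Taking expectation over the algorithm's randomness then gives the stated bound. I do not anticipate a genuine obstacle, since the argument is a deterministic descent computation; the only points requiring care are the bookkeeping of the Young's-inequality constants so that the final coefficients (namely $\tfrac12$ on $\|h_t^f\|^2$, $1$ on $\|e_t^f\|^2$, and $2$ on both $L^2\|y_t-y^\ast(x_t)\|^2$ and $\|B_t\|^2$) come out exactly as stated, and keeping the bias term $B_t$ separate throughout rather than folding it into $e_t^f$, since $h_t^f$ is a \emph{biased} estimator and the recursion for $e_t^f$ later exploits this separation.
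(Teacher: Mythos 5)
Your proposal is correct and follows essentially the same route as the paper's own proof: the $L_f$-smoothness descent inequality with $x_{t+1}-x_t=-\alpha_t h_t^f$, the polarization identity to produce $-\tfrac{\alpha_t}{2}\|\nabla\ell(x_t)\|^2-\tfrac{\alpha_t}{2}(1-\alpha_t L_f)\|h_t^f\|^2+\tfrac{\alpha_t}{2}\|h_t^f-\nabla\ell(x_t)\|^2$, and then the same two-stage Young splitting of the residual into $2\|e_t^f\|^2+4L^2\|y_t-y^\ast(x_t)\|^2+4\|B_t\|^2$ via the first inequality of Lemma~\ref{Lem: Lip_Ghadhimi}, with expectation taken only at the end. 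The constant bookkeeping matches the paper exactly, so there is nothing to correct.
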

	
	\begin{proof}
		Using the Lipschitz smoothness of the objective function from Lemma \ref{Lem: Lip_Ghadhimi} we have:
		\begin{align}
		\ell(x_{t+1}) & \leq \ell(x_t) + \big\langle \nabla \ell(x_t), x_{t+1} - x_t \big\rangle + \frac{L_f}{2} \| x_{t+1} - x_t\|^2 \nonumber\\
		& \overset{(a)}{=} \ell(x_t) - \alpha_t \big\langle \nabla \ell(x_t), h_t^f \big\rangle + \frac{\alpha_t^2 L_f}{2} \| h_t^f \|^2 \nonumber\\
		& \overset{(b)}{=} \ell(x_t) - \frac{\alpha_t}{2} \|\nabla \ell(x_t)\|^2 - \frac{\alpha_t}{2} (1 - \alpha_t L_f) \|h_t^f\|^2 + \frac{\alpha_t}{2} \|h_t^f - \nabla \ell(x_t)\|^2.
		\label{Eq: Smoothness_l_1st}
		\end{align}
		where $(a)$ results from Step 7 of Algorithm \ref{Algo: Accelerated_STSA} and $(b)$ uses $\langle a, b \rangle =  \frac{1}{2} \| a\|^2 + \frac{1}{2} \| b \|^2  - \frac{1}{2} \| a - b \|^2$. Next, we bound the term $\|h_t^f - \nabla \ell(x_t)\|^2$ as follows
		\begin{align*}
		\|h_t^f - \nabla \ell(x_t)\|^2 &=  \|h_t^f - \bar{\nabla} f(x_t,y_t) - B_t + \bar{\nabla} f(x_t,y_t) + B_t -   \nabla \ell(x_t)\|^2\\
		& \overset{(c)}{\leq} 2 \|h_t^f - \bar{\nabla} f(x_t,y_t) - B_t \|^2 + 4\|\bar{\nabla} f(x_t,y_t) -   \nabla \ell(x_t)\|^2 + 4\|B_t \|^2\\
		& \overset{(d)}{\leq} 2 \|e_t^f\|^2 + 4 L^2 \|y_t -   y^\ast(x_t)\|^2 + 4\|B_t \|^2,
		\end{align*}
		where inequality $(c)$ uses \eqref{Eq: Sum_Norms} and $(d)$ results from the definition of $e_t^f \coloneqq h_t^f - \bar{\nabla} f(x_t,y_t) - B_t $ and \eqref{eq:lip} in Lemma \ref{Lem: Lip_Ghadhimi}. Substituting the above in \eqref{Eq: Smoothness_l_1st} and taking expectation w.r.t. the stochasticity of the algorithm we get the statement of the lemma. 
	\end{proof}

	\subsection{Descent in the iterates of the lower level problem} 
\begin{lem}
	\label{Lem: InnerIterates_Descent}
		Define $e_t^g \coloneqq h_t^g -  \nabla_y g(x_t , y_t)$. then the iterates of the inner problem generated according to Algorithm \ref{Algo: Accelerated_STSA}, satisfy
	\begin{align*}
&	\mathbb{E}\|y_{t+1} - y^\ast(x_{t + 1})\|^2 \\
& \leq (1 + \gamma_t) (1 + \delta_t) \bigg(1 -  2 \beta_t \frac{\mu_g L_g}{\mu_g + L_g } \bigg)  \mathbb{E} \|y_t - y^\ast(x_{t})\|^2 + \bigg( 1 + \frac{1}{\gamma_t} \bigg) L_y^2 \alpha_{t}^2 \mathbb{E}   \| h_{t}^f \|^2  \\
	& \quad  - 
	(1 + \gamma_t) (1 + \delta_t) \bigg( \frac{2 \beta_t}{\mu_g + L_g} - \beta_t^2  \bigg) \mathbb{E} \|\nabla_y g(x_t , y_t)\| +
(1 + \gamma_t) \bigg( 1 + \frac{1}{\delta_t} \bigg)	\beta_t^2 \mathbb{E}  \| e_t^g \|^2.
	\end{align*}
	for all $t \in \{0,\ldots, T - 1\}$ with some $\gamma_t, \delta_t > 0$., where the expectation is w.r.t. the stochasticity of the algorithm. 
\end{lem}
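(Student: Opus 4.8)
The plan is to bound the tracking error $\|y_{t+1} - y^\ast(x_{t+1})\|^2$ by comparing against the intermediate target $y^\ast(x_t)$, which stays fixed while the lower-level step is taken. First I would split, via Young's inequality with free parameter $\gamma_t > 0$,
\begin{align*}
\|y_{t+1} - y^\ast(x_{t+1})\|^2 \leq (1 + \gamma_t)\|y_{t+1} - y^\ast(x_t)\|^2 + \Big(1 + \tfrac{1}{\gamma_t}\Big)\|y^\ast(x_t) - y^\ast(x_{t+1})\|^2.
\end{align*}
The second term is the drift of the target induced by the $x$-update; applying the Lipschitz continuity of $y^\ast(\cdot)$ from Lemma~\ref{Lem: Lip_Ghadhimi} together with $x_{t+1} - x_t = -\alpha_t h_t^f$ bounds it by $(1 + 1/\gamma_t) L_y^2 \alpha_t^2 \|h_t^f\|^2$, which is exactly the second term in the claimed inequality.

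For the progress term $\|y_{t+1} - y^\ast(x_t)\|^2$, I would substitute the $y$-update $y_{t+1} = y_t - \beta_t h_t^g$ and peel off the estimation error by writing $h_t^g = \nabla_y g(x_t, y_t) + e_t^g$. A second Young's inequality with parameter $\delta_t > 0$ then gives
\begin{align*}
\|y_{t+1} - y^\ast(x_t)\|^2 \leq (1 + \delta_t)\big\| y_t - y^\ast(x_t) - \beta_t \nabla_y g(x_t, y_t) \big\|^2 + \Big(1 + \tfrac{1}{\delta_t}\Big)\beta_t^2 \|e_t^g\|^2,
\end{align*}
which, combined with the first split, isolates the noise contribution $(1 + \gamma_t)(1 + 1/\delta_t)\beta_t^2 \|e_t^g\|^2$.

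The technical heart is the deterministic gradient-step term $\| y_t - y^\ast(x_t) - \beta_t \nabla_y g(x_t, y_t) \|^2$. Expanding the square and using that $y^\ast(x_t)$ minimizes the $\mu_g$-strongly-convex, $L_g$-smooth map $g(x_t, \cdot)$, so that $\nabla_y g(x_t, y^\ast(x_t)) = 0$, I would invoke the tight cocoercivity estimate for strongly-convex-and-smooth functions,
\begin{align*}
\big\langle \nabla_y g(x_t, y_t), y_t - y^\ast(x_t) \big\rangle \geq \frac{\mu_g L_g}{\mu_g + L_g}\|y_t - y^\ast(x_t)\|^2 + \frac{1}{\mu_g + L_g}\|\nabla_y g(x_t, y_t)\|^2,
\end{align*}
which yields
\begin{align*}
\big\| y_t - y^\ast(x_t) - \beta_t \nabla_y g(x_t, y_t) \big\|^2 \leq \Big(1 - 2\beta_t \tfrac{\mu_g L_g}{\mu_g + L_g}\Big)\|y_t - y^\ast(x_t)\|^2 - \Big(\tfrac{2\beta_t}{\mu_g + L_g} - \beta_t^2\Big)\|\nabla_y g(x_t, y_t)\|^2.
\end{align*}
Substituting this back through the two Young's splits and taking total expectation over the algorithm's randomness reproduces the claimed bound. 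The one genuinely delicate point is the simultaneous appearance of the contraction factor $1 - 2\beta_t \mu_g L_g/(\mu_g + L_g)$ and the favorable negative term $-(2\beta_t/(\mu_g+L_g) - \beta_t^2)\|\nabla_y g(x_t,y_t)\|^2$: both must issue from a single use of the tight strongly-convex-smooth inequality rather than from separately applying strong convexity and a crude bound on the gradient step, because the subsequent potential-function analysis relies on this negative gradient term to absorb error contributions propagated through the coupled $x$-update. (I note that the statement writes $\mathbb{E}\|\nabla_y g(x_t,y_t)\|$ where $\mathbb{E}\|\nabla_y g(x_t,y_t)\|^2$ is intended.)
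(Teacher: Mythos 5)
Your proposal is correct and follows essentially the same route as the paper's own proof: the Young split with $\gamma_t$ against the intermediate target $y^\ast(x_t)$, the Lipschitz bound on the drift $\|y^\ast(x_{t+1})-y^\ast(x_t)\|$ via the $x$-update, the second Young split with $\delta_t$ to peel off $\beta_t^2\|e_t^g\|^2$, and the single tight strongly-convex-and-smooth (cocoercivity) inequality producing both the contraction factor and the negative $\|\nabla_y g(x_t,y_t)\|^2$ term. You are also right that the lemma statement's $\mathbb{E}\|\nabla_y g(x_t,y_t)\|$ is a typo for $\mathbb{E}\|\nabla_y g(x_t,y_t)\|^2$, as the paper's own derivation confirms.
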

\begin{proof}
	Consider the term $\mathbb{E}\|y_{t+1} - y^\ast(x_{t + 1})\|^2$, we have
	\begin{align}
	\mathbb{E}\|y_{t+1} - y^\ast(x_{t + 1})\|^2 & \overset{(a)}{ \leq} 
 (1 + \gamma_t)	\mathbb{E}\|y_{t+1} - y^\ast(x_t) \|^2 + \bigg(1 + \frac{1}{\gamma_t} \bigg) 	\mathbb{E}\| y^\ast(x_t) - y^\ast(x_{t + 1})\|^2 
	 \nonumber\\
& \overset{(b)}{=} (1 + \gamma_t)	\mathbb{E}\|y_t - \beta_t h_t^g - y^\ast(x_t) \|^2 + \bigg(1 + \frac{1}{\gamma_t} \bigg) L_y^2	\mathbb{E}\|x_{t+1} -  x_t \|^2 \nonumber \\
& \overset{(c)}{\leq}  (1 + \gamma_t) (1 + \delta_t)	\mathbb{E}\|y_t - \beta_t \nabla_y g(x_t, y_t) - y^\ast(x_t) \|^2 \nonumber\\
&  +  (1 + \gamma_t) \bigg( 1 + \frac{1}{\delta_t} \bigg) \beta_t^2 \| h_t^g - \nabla_y g(x_t , y_t)\|^2  + \bigg(1 + \frac{1}{\gamma_t} \bigg) L_y^2 \alpha_t^2	\mathbb{E}\| h_t^f \|^2 .
	\label{Eq: Descent_InnerIterates_1st}
	\end{align}
where $(a)$ results from the Young's inequality; $(b)$ uses Step 5 of Algorithm \ref{Algo: Accelerated_STSA} and Lipschitzness of $y^\ast(\cdot)$ in Lemma \ref{Lem: Lip_Ghadhimi} and $(c)$ again utilizes Young's inequality and Step 7 of Algorithm \ref{Algo: Accelerated_STSA}. Next, we consider the first term of the above equation we have
\begin{align*}
 & \|y_t - \beta_t \nabla_y g(x_t, y_t) - y^\ast(x_t) \|^2  \\
 & \qquad \qquad =    \|y_t  - y^\ast(x_t) \|^2  + \beta_t^2  \|\nabla_y g(x_t, y_t) \|^2  - 2 \beta_t \langle \nabla_y g(x_t, y_t), y_t - y^\ast(x_t) \rangle \\
 	& \qquad \qquad \overset{(d)}{\leq} \bigg( 1 - 2 \beta_t \frac{\mu_g L_g}{\mu_g + L_g} \bigg) \| y_t - y^\ast(x_t)\|^2 - \bigg(\frac{ 2 \beta_t }{\mu_g + L_g} - \beta_t^2 \bigg) \|\nabla_y g(x_t, y_t)\|^2,
\end{align*}
where inequality $(d)$ above results from the strong convexity of $g$, which implies
\begin{align*}
\langle \nabla_y g(x_t, y_t), y_t - y^\ast(x_t) \rangle \geq \frac{\mu_g L_g}{\mu_g + L_g} \| y_t - y^\ast(x_t) \|^2 + \frac{1}{\mu_g + L_g} \| \nabla_y g(x_t, y_t)\|^2.
\end{align*}
Substituting in \eqref{Eq: Descent_InnerIterates_1st} and using the definition $e_t^g \coloneqq h_t^g -  \nabla_y g(x_t , y_t)$ we get the statement of the lemma. 
\end{proof}

	\subsection{Descent in the gradient estimation error of the outer function} 
	\label{Subsec: Descent_GradEstError_Appendix}
	Before presenting the descent in the gradient estimation error of the outer function we define $\mathcal{F}_t = \sigma \{y_0, x_0,  \ldots, y_t, x_t\}$ as the sigma algebra generated by the sequence of iterates up to the $t$th iteration of \aname.
	
	\begin{lem}
		\label{Lem: Descent_grad_error}
		Define $e_t^f \coloneqq h_t^f - \bar{\nabla}f(x_t , y_t) - B_t$. Then the consecutive iterates of Algorithm \ref{Algo: Accelerated_STSA} satisfy:
		\begin{align*}
		\mathbb{E} \|e_{t+1}^f\|^2 	&  \leq (1 - \eta_{t+1}^f)^2 \mathbb{E} \| e_t^f \|^2 +  2 (\eta_{t+1}^f )^2 \sigma_f^2  +  4 (1 - \eta_{t+1}^f)^2 L_K^2 \alpha_t^2 \mathbb{E}\|h_t^f\|^2 \\
		&  \qquad \qquad \qquad   + 8 (1 - \eta_{t+1}^f)^2 L_K^2 \beta_t^2 \mathbb{E}\|e_t^g\|^2 + 8 (1 - \eta_{t+1}^f)^2 L_K^2 \beta_t^2 \mathbb{E}\|\nabla_y g(x_t, y_t)\|^2,
		\end{align*}
		for all $t \in \{0,\ldots, T - 1\}$, with $L_K$ defined in the statement of Lemma \ref{Lem: Lip_GradEst_Appendix}. Here the expectation is taken w.r.t the stochasticity of the algorithm. 	
	\end{lem}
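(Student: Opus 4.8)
The plan is to start from the recursion for $e_{t+1}^f$ obtained by shifting the displayed identity just preceding the lemma by one index, and to split it into one $\mathcal{F}_{t+1}$-measurable piece and two conditionally zero-mean pieces. Writing $\eta := \eta_{t+1}^f$ for brevity, the recursion reads $e_{t+1}^f = (1-\eta)\, e_t^f + C + D$, where
\begin{align*}
C &:= (1-\eta)\big\{ [\bar\nabla f(x_{t+1},y_{t+1};\bar\xi_{t+1}) - \bar\nabla f(x_t,y_t;\bar\xi_{t+1})] - [\bar\nabla f(x_{t+1},y_{t+1}) + B_{t+1} - \bar\nabla f(x_t,y_t) - B_t] \big\}, \\
D &:= \eta\,\big( \bar\nabla f(x_{t+1},y_{t+1};\bar\xi_{t+1}) - \bar\nabla f(x_{t+1},y_{t+1}) - B_{t+1} \big).
\end{align*}
First I would record that $x_{t+1},y_{t+1},h_t^f,e_t^g$, and $e_t^f$ are all $\mathcal{F}_{t+1}$-measurable (indeed $h_t^f = (x_t - x_{t+1})/\alpha_t$), while the fresh sample $\bar\xi_{t+1}$ is independent of $\mathcal{F}_{t+1}$.

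The structural fact driving the proof is that, conditioned on $\mathcal{F}_{t+1}$, both $C$ and $D$ have zero mean. This holds because $\mathbb{E}_{\bar\xi}[\bar\nabla f(x,y;\bar\xi)] = \bar\nabla f(x,y) + B(x,y)$ by the very definition of the bias $B$, applied at the two frozen points $(x_{t+1},y_{t+1})$ and $(x_t,y_t)$. Consequently, expanding $\|e_{t+1}^f\|^2 = \|(1-\eta)e_t^f + (C+D)\|^2$ and taking $\mathbb{E}[\,\cdot\,|\,\mathcal{F}_{t+1}]$ annihilates the cross term $2(1-\eta)\langle e_t^f, \mathbb{E}[C+D\,|\,\mathcal{F}_{t+1}]\rangle$, leaving
\[
\mathbb{E}[\|e_{t+1}^f\|^2\,|\,\mathcal{F}_{t+1}] = (1-\eta)^2\|e_t^f\|^2 + \mathbb{E}[\|C+D\|^2\,|\,\mathcal{F}_{t+1}].
\]

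It then remains to bound the conditional second moment of $C+D$. I would apply $\|C+D\|^2 \leq 2\|C\|^2 + 2\|D\|^2$ and treat the two pieces by different mechanisms. For $D$, Assumption~\ref{Assump: Stochastic Grad}(i) evaluated at $(x_{t+1},y_{t+1})$ gives $\mathbb{E}[\|D\|^2\,|\,\mathcal{F}_{t+1}] \leq \eta^2 \sigma_f^2$, producing the $2(\eta_{t+1}^f)^2\sigma_f^2$ term. For $C$, I would use that $C/(1-\eta)$ is the $\mathcal{F}_{t+1}$-centering of the finite difference $\bar\nabla f(x_{t+1},y_{t+1};\bar\xi_{t+1}) - \bar\nabla f(x_t,y_t;\bar\xi_{t+1})$, so that ``variance $\le$ second moment'' lets me discard the centering and invoke the anisotropic Lipschitz estimate of Lemma~\ref{Lem: Lip_GradEst_Appendix} (interpolating one coordinate at a time), giving $\mathbb{E}[\|C\|^2\,|\,\mathcal{F}_{t+1}] \leq (1-\eta)^2\big(2L_K^2\|x_{t+1}-x_t\|^2 + 2L_K^2\|y_{t+1}-y_t\|^2\big)$. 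Substituting the updates $\|x_{t+1}-x_t\|^2 = \alpha_t^2\|h_t^f\|^2$ and $\|y_{t+1}-y_t\|^2 = \beta_t^2\|h_t^g\|^2$, then $\|h_t^g\|^2 \leq 2\|e_t^g\|^2 + 2\|\nabla_y g(x_t,y_t)\|^2$ (from $e_t^g = h_t^g - \nabla_y g(x_t,y_t)$), and finally taking total expectation yields exactly the four remaining terms.

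The main obstacle is precisely the temptation to simplify prematurely: algebraically one has $C+D = (u-\bar u) - (1-\eta)(v-\bar v)$, where $u,v$ denote the two stochastic gradients and $\bar u,\bar v$ their conditional means, which collapses the two pieces but attaches the coefficient $1$ (rather than $\eta$) to the fresh-noise term $u-\bar u$; this would produce a term of order $\sigma_f^2$ instead of $\eta^2\sigma_f^2$ and destroy the variance reduction that renders the error recursion contractive. Keeping $C$ and $D$ separate, so that the fresh-noise variance is scaled by $(\eta_{t+1}^f)^2$ while the finite-difference variance carries the Lipschitz factor $L_K^2$, is the crux of the estimate. The remaining care is purely bookkeeping: verifying the $\mathcal{F}_{t+1}$-measurability invoked to kill the cross term, and correctly applying the two directional Lipschitz bounds of Lemma~\ref{Lem: Lip_GradEst_Appendix} to the centered finite difference through a single interpolation step.
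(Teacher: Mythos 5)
Your proposal is correct and follows essentially the same route as the paper's proof: the identical decomposition $e_{t+1}^f=(1-\eta_{t+1}^f)e_t^f+C+D$ with the cross term killed by conditional unbiasedness given $\mathcal{F}_{t+1}$, Young's inequality to separate the $\eta$-scaled fresh noise (bounded via Assumption~\ref{Assump: Stochastic Grad}(i)) from the $(1-\eta)$-scaled centered finite difference, then the mean--variance inequality followed by the coordinate-wise Lipschitz bounds of Lemma~\ref{Lem: Lip_GradEst_Appendix} and the splitting $\|h_t^g\|^2\leq 2\|e_t^g\|^2+2\|\nabla_y g(x_t,y_t)\|^2$. Your closing remark about why the fresh-noise term must retain the factor $\eta_{t+1}^f$ (rather than being absorbed into a single centered difference with coefficient $1$) correctly identifies the step that makes the recursion contractive, and is exactly the grouping the paper uses.
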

	\begin{proof}
		From the definition of $e_t^f$ we have
		\begin{align}
	&	\mathbb{E} \|e_{t + 1}^f\|^2 \\
		& = \mathbb{E} \| h_{t+1}^f - \bar{\nabla} f(x_{t+1} , y_{t+1}) - B_{t+1} \|^2 \nonumber\\
		&  \overset{(a)}{=} \mathbb{E} \big\|  \eta_{t+1}^f \bar{\nabla} f(x_{t+1} , y_{t+1} ; \xi_{t+1}) +  (1 - \eta_{t + 1})      \big(  h_{t}^{f}   + \bar{\nabla} f(x_{t+1}, y_{t+1} ; \xi_{t+1}) -   \bar{\nabla} f(x_{t}, y_{t} ; \xi_{t+1})  \big) \nonumber\\ 
	& \qquad \qquad \qquad \qquad \qquad    \qquad \qquad \qquad  \qquad \qquad \qquad \qquad \qquad 	- \bar{\nabla} f(x_{t+1} , y_{t+1}) - B_{t+1} \big\|^2 \nonumber\\
		& \overset{(b)}{=} \mathbb{E} \big\| (1 - \eta_{t+1}^f) e_t^f + \eta_{t+1}^f (\bar{\nabla} f(x_{t+1}, y_{t+1} ; \xi_{t + 1})  - \bar{\nabla} f(x_{t+1} , y_{t+1}) - B_{t+1} ) \nonumber\\
		&    \qquad \qquad \qquad   +  (1 - \eta_{t + 1}^f) \big((\bar{\nabla} f(x_{t+1} , y_{t+1} ; \xi_{t+1})  - \bar{\nabla} f(x_{t+1} , y_{t+1}) - B_{t + 1} ) \nonumber \\
	&\qquad \qquad \qquad \qquad \qquad \qquad \qquad	- (\bar{\nabla} f(x_{t} , y_{t} ; \xi_{t+1})  - \bar{\nabla} f(x_{t} , y_t) - B_{t} ) \big) \big\|^2 \nonumber\\
			& \overset{(c)}{=}  (1 - \eta_{t+1}^f)^2 \mathbb{E} \big\| e_t^f  \big\|^2 +  \mathbb{E} \big\| \eta_{t+1}^f (\bar{\nabla} f(x_{t+1}, y_{t+1} ; \xi_{t + 1})  - \bar{\nabla} f(x_{t+1} , y_{t+1}) - B_{t+1} ) \nonumber\\
		& \qquad \qquad\qquad \qquad\qquad    +  (1 - \eta_{t + 1}^f) \big((\bar{\nabla} f(x_{t+1} , y_{t+1} ; \xi_{t+1})  - \bar{\nabla} f(x_{t+1} , y_{t+1}) - B_{t + 1} ) \nonumber \\
		& \qquad \qquad\qquad \qquad\qquad \qquad\qquad \qquad \qquad \qquad - (\bar{\nabla} f(x_{t} , y_{t} ; \xi_{t+1})  - \bar{\nabla} f(x_{t} , y_t) - B_{t} ) \big) \big\|^2 \nonumber\\
		&  \overset{(d)}{\leq} (1 - \eta_{t + 1}^f)^2 \mathbb{E} \| e_{t}^f \|^2 +  2 (\eta_{t+1}^f)^2 \mathbb{E} \big\| \bar{\nabla} f(x_{t+1} , y_{t+1} ; \xi_{t + 1})  - \bar{\nabla} f(x_{t+1} , y_{t+1}) - B_{t+1}   \big\|^2 \nonumber\\
		& \qquad \qquad \qquad \qquad +  2 (1 - \eta_{t}^f)^2\mathbb{E}\big\| \big(\bar{\nabla} f(x_{t+1} , y_{t+1} ; \xi_{t+1})  - \bar{\nabla} f(x_{t+1} , y_{t+1}) - B_{t+1} \big) \nonumber\\
		& \qquad \qquad \qquad \qquad \qquad\qquad \qquad\qquad \qquad \qquad - \big( \bar{\nabla} f(x_{t} , y_{t} ; \xi_{t+1})  - \bar{\nabla} f(x_{t} , y_t) - B_{t}  \big) \big\|^2  \nonumber\\
			&  \overset{(e)}{\leq} (1 - \eta_{t + 1}^f)^2 \mathbb{E} \| e_{t}^f \|^2 +  2 (\eta_{t+1}^f)^2 \sigma_f^2 \nonumber\\
		& \qquad \qquad \qquad\qquad   +  2 (1 - \eta_{t}^f)^2\mathbb{E}\big\| \big(\bar{\nabla} f(x_{t+1} , y_{t+1} ; \xi_{t+1})  - \bar{\nabla} f(x_{t+1} , y_{t+1}) - B_{t+1} \big) \nonumber \\
		& \qquad \qquad\qquad \qquad\qquad \qquad\qquad \qquad \qquad \qquad - \big( \bar{\nabla} f(x_{t} , y_{t} ; \xi_{t+1})  - \bar{\nabla} f(x_{t} , y_t) - B_{t}  \big) \big\|^2  
			\label{Eq: Descent_grad_error_1st}
		\end{align}
			where equality $(a)$ uses the definition of the recursive gradient estimator \eqref{Eq: HybridGradEstimate_Outer}; $(b)$ results from the definition $e_t^f \coloneqq h_t^f - \bar{\nabla}f(x_t , y_t) - B_t$; $(c)$ follows from the fact that conditioned on $\mathcal{F}_{t+1} = \sigma \{y_0, x_0, \ldots, y_t,x_t, y_{t + 1},x_{t + 1} \}$
	\begin{align*}
	&	\mathbb{E} \Big\langle e_t^f,   (\bar{\nabla} f(x_{t+1}, y_{t+1} ; \xi_{t + 1})  - \bar{\nabla} f(x_{t+1} , y_{t+1}) - B_{t+1} )  \\
	& \qquad \qquad \qquad \qquad  -  (1 - \eta_{t + 1}^f) \big( (\bar{\nabla} f(x_{t} , y_{t} ; \xi_{t+1})  - \bar{\nabla} f(x_{t} , y_t) - B_{t} ) \big)   \Big\rangle \\
		& \mathbb{E} \Big\langle e_t^f,  \mathbb{E} \big[ (\bar{\nabla} f(x_{t+1}, y_{t+1} ; \xi_{t + 1})  - \bar{\nabla} f(x_{t+1} , y_{t+1}) - B_{t+1} )  \\
		& \quad \qquad \underbrace{ \qquad \qquad -  (1 - \eta_{t + 1}^f) \big( (\bar{\nabla} f(x_{t} , y_{t} ; \xi_{t+1})  - \bar{\nabla} f(x_{t} , y_t) - B_{t} ) \big) | \mathcal{F}_{t+1} \big]}_{=0}  \Big\rangle = 0,
	\end{align*}
	which follows from the fact that the second term in the inner product above is zero mean as a consequence of Assumption \ref{Assump: Stochastic Grad}-(i) and inequality $(d)$ utilizes \eqref{Eq: Sum_Norms}; and $(e)$ results from Assumption \ref{Assump: Stochastic Grad}-(i). 
	
	Next, we bound the last term of \eqref{Eq: Descent_grad_error_1st} above
		\begin{align}
	 &  2 (1 - \eta_{t+1}^f)^2\mathbb{E}\big\| \big(\bar{\nabla} f(x_{t+1} , y_{t+1} ; \xi_{t+1})  - \bar{\nabla} f(x_{t} , y_{t} ; \xi_{t+1}) \big)    \nonumber\\
	 & \qquad \qquad \qquad \qquad \qquad \qquad - \Big(\big(\bar{\nabla} f(x_{t+1} , y_{t+1}) + B_{t+1} \big) - \big(   \bar{\nabla} f(x_{t} , y_t) + B_{t}  \big) \Big)\big\|^2 \nonumber\\
		&   \overset{(a)}{\leq}    2 (1 - \eta_{t + 1}^f)^2\mathbb{E}\big\|\bar{\nabla} f(x_{t+1} , y_{t+1} ; \xi_{t+1})  - \bar{\nabla} f(x_{t} , y_{t} ; \xi_{t+1}) \|^2 \nonumber\\
		& \overset{(b)} {\leq}    4 (1 - \eta_{t+1}^f)^2 \mathbb{E}\big\|\bar{\nabla} f(x_{t+1} , y_{t+1} ; \xi_{t+1})  - \bar{\nabla} f(x_t , y_{t  + 1} ; \xi_{t + 1}) \big\|^2 \nonumber \\
		&  \qquad \qquad \qquad \qquad   \qquad \qquad   + 4 (1 - \eta_{t + 1}^f)^2 \mathbb{E}\big\|\bar{\nabla} f(x_t , y_{t + 1} ; \xi_{t  +1})  - \bar{\nabla} f(x_{t} , y_t ; \xi_{t + 1}) \big\|^2 \nonumber\\
		& \overset{(c)}{\leq}   4 (1 - \eta_{t + 1}^f)^2 L_K^2 \mathbb{E}\| x_{t+1}  -  x_{t} \|^2   + 4 (1 - \eta_{t + 1}^f)^2 L_K^2 \mathbb{E}\|y_{t+1} - y_{t}  \|^2 \nonumber\\
		&  \overset{(d)}{\leq}    4 (1 - \eta_{t + 1}^f)^2 L_K^2 \alpha_t^2 \mathbb{E}\|  h_t^f \|^2   + 4 (1 - \eta_{t+1}^f)^2 L_K^2 \beta_{t}^2 \mathbb{E}\|h_{t}^g \|^2,\nonumber \\
		& \overset{(e)}{\leq}   4 (1 - \eta_{t + 1}^f)^2 L_K^2 \alpha_t^2 \mathbb{E}\|  h_t^f \|^2    + 8 (1 - \eta_{t+1}^f)^2 L_K^2 \beta_{t}^2 \mathbb{E}\|e_{t}^g \|^2 \nonumber\\
		& \qquad \qquad \qquad \qquad \qquad \qquad \qquad \qquad + 8 (1 - \eta_{t+1}^f)^2 L_K^2 \beta_{t}^2 \mathbb{E}\|\nabla_y g(x_t, y_t) \|^2,
			\label{Eq: Descent_grad_error_2nd}
		\end{align}
  where $(a)$ follows from the mean variance inequality: For a random variable $Z$ we have $\mathbb{E}\|Z - \mathbb{E}[Z]\|^2 \leq \mathbb{E}\| Z\|^2$ with $Z$ defined as $Z \coloneqq \bar{\nabla} f(x_{t+1}, y_{t+1} ; \xi_{t + 1}) - \bar{\nabla} f(x_t, y_t ; \xi_{t + 1})$; $(b)$ again uses \eqref{Eq: Sum_Norms}; $(c)$ follows from Lemma \ref{Lem: Lip_GradEst_Appendix}; inequality $(d)$ uses Steps 5 and 7 of Algorithm \ref{Algo: Accelerated_STSA}; finally, $(e)$ utilizes \eqref{Eq: Sum_Norms} and the definition of $e_t^g$.
		
Finally, substituting \eqref{Eq: Descent_grad_error_2nd} in \eqref{Eq: Descent_grad_error_1st}, we get the statement of the lemma. 

	 Therefore, the lemma is proved.  
	\end{proof}
	
	\subsection{Descent in the gradient estimation error of the inner function} 
	We consider the descent on the gradient estimation error of the inner function. 
	\begin{lem}
		\label{lem: Descent_Grad_Error_Inner}
	Define $e_t^g \coloneqq h_t^g - \nabla_y g(x_t , y_t)$. Then the iterates generated from Algorithm \ref{Algo: Accelerated_STSA} satisfy
	\begin{align*}
	\mathbb{E} \|e_{t+1}^g\|^2	& \leq \Big(  (1 - \eta_{t+1}^g)^2 + 8 (1 - \eta_{t+1}^g)^2 L_g^2 \beta_t^2
		\Big) \mathbb{E}\| e_t^g \|^2  + 2 (\eta_{t+1}^g)^2 \sigma_g^2 \\
		&  \qquad \qquad \qquad + 4 (1 - \eta_{t+1}^g)^2 L_g^2 \alpha_t^2 \mathbb{E} \|  h_t^f \|^2    +  8 (1 - \eta_{t+1}^g)^2 L_g^2 \beta_t^2 \mathbb{E} \|   \nabla_y g(x_t, y_t)   \|^2
	\end{align*}
	for all $t \in \{ 0, 1, \cdots, T - 1 \}$, where the expectation is taken w.r.t. the stochasticity of the algorithm. 
	\end{lem}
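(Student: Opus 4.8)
The plan is to mirror the proof of Lemma~\ref{Lem: Descent_grad_error} almost verbatim, replacing the biased outer estimator $\bar{\nabla}f$ by the \emph{unbiased} lower-level estimator $\nabla_y g(\cdot;\zeta)$ and the constant $L_K$ by $L_g$; the absence of a bias term $B_t$ makes the inner case strictly simpler. First I would start from $e_{t+1}^g = h_{t+1}^g - \nabla_y g(x_{t+1},y_{t+1})$, expand $h_{t+1}^g$ via the recursion \eqref{Eq: HybridGradEstimate_Inner} at index $t+1$, and substitute $h_t^g = e_t^g + \nabla_y g(x_t,y_t)$. Collecting the $\eta_{t+1}^g$ and $(1-\eta_{t+1}^g)$ contributions and adding/subtracting the appropriate true gradients yields the recursion
\begin{align*}
e_{t+1}^g &= (1-\eta_{t+1}^g)\,e_t^g + \eta_{t+1}^g\big(\nabla_y g(x_{t+1},y_{t+1};\zeta_{t+1}) - \nabla_y g(x_{t+1},y_{t+1})\big) \\
&\quad + (1-\eta_{t+1}^g)\big[\big(\nabla_y g(x_{t+1},y_{t+1};\zeta_{t+1}) - \nabla_y g(x_t,y_t;\zeta_{t+1})\big) - \big(\nabla_y g(x_{t+1},y_{t+1}) - \nabla_y g(x_t,y_t)\big)\big].
\end{align*}

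Next I would take the expectation of the squared norm after conditioning on $\mathcal{F}_{t+1} = \sigma\{y_0,x_0,\dots,y_{t+1},x_{t+1}\}$. The crucial observation is that $e_t^g$ is $\mathcal{F}_{t+1}$-measurable (indeed $h_t^g = (y_t - y_{t+1})/\beta_t$ by Step~4 of Algorithm~\ref{Algo: Accelerated_STSA}), while both the $\eta_{t+1}^g$-term and the bracketed difference term are conditionally zero-mean in the fresh sample $\zeta_{t+1}$ because $\nabla_y g(x,y;\zeta)$ is an unbiased estimate of $\nabla_y g(x,y)$. Hence the cross term vanishes exactly as in step $(c)$ of Lemma~\ref{Lem: Descent_grad_error}, leaving $\mathbb{E}\|e_{t+1}^g\|^2 = (1-\eta_{t+1}^g)^2\,\mathbb{E}\|e_t^g\|^2 + \mathbb{E}\|(\text{zero-mean part})\|^2$. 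Applying $\|a+b\|^2 \le 2\|a\|^2 + 2\|b\|^2$ and bounding the first summand with Assumption~\ref{Assump: Stochastic Grad}(ii) produces the term $2(\eta_{t+1}^g)^2\sigma_g^2$.

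Then I would control the bracketed difference term via the mean-variance inequality $\mathbb{E}\|Z - \mathbb{E}[Z\mid\mathcal{F}_{t+1}]\|^2 \le \mathbb{E}\|Z\|^2$ with $Z = \nabla_y g(x_{t+1},y_{t+1};\zeta_{t+1}) - \nabla_y g(x_t,y_t;\zeta_{t+1})$, split the increment into its $x$ and $y$ parts using \eqref{Eq: Sum_Norms}, and apply the $L_g$-Lipschitz continuity of $\nabla_y g(\cdot;\zeta)$ (Assumptions~\ref{Assump: InnerFunction}(ii) and \ref{Assump: StocFn}) to get a bound $\le 2L_g^2\big(\|x_{t+1}-x_t\|^2 + \|y_{t+1}-y_t\|^2\big)$. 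Substituting $x_{t+1}-x_t = -\alpha_t h_t^f$ and $y_{t+1}-y_t = -\beta_t h_t^g$ from Steps~4--5 gives the $4(1-\eta_{t+1}^g)^2 L_g^2 \alpha_t^2\|h_t^f\|^2$ term together with a $4(1-\eta_{t+1}^g)^2 L_g^2 \beta_t^2\|h_t^g\|^2$ term. Finally, rewriting $\|h_t^g\|^2 \le 2\|e_t^g\|^2 + 2\|\nabla_y g(x_t,y_t)\|^2$ converts the latter into the stated $8(1-\eta_{t+1}^g)^2 L_g^2 \beta_t^2$ multiples of $\mathbb{E}\|e_t^g\|^2$ and $\mathbb{E}\|\nabla_y g(x_t,y_t)\|^2$; collecting the two $\mathbb{E}\|e_t^g\|^2$ contributions yields the coefficient $(1-\eta_{t+1}^g)^2 + 8(1-\eta_{t+1}^g)^2 L_g^2 \beta_t^2$ claimed in the lemma.

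I do not anticipate a genuine obstacle, since the argument is structurally identical to Lemma~\ref{Lem: Descent_grad_error}. The only points requiring care are (i) verifying that $e_t^g$ is $\mathcal{F}_{t+1}$-measurable so the martingale-difference cross term truly vanishes, and (ii) keeping the factor bookkeeping consistent: the factor $4$ (rather than $2$) on the $h_t^f$ and $h_t^g$ terms arises precisely from the extra $x$/$y$ split of the increment, and the subsequent factor $8$ from expanding $\|h_t^g\|^2$. Notably, no Hessian-inverse or $L_K$ machinery enters here, because the lower-level gradient estimate is genuinely unbiased with variance bounded by $\sigma_g^2$.
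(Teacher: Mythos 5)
Your proposal is correct and follows essentially the same route as the paper's own proof: the same decomposition of $e_{t+1}^g$ into $(1-\eta_{t+1}^g)e_t^g$ plus conditionally zero-mean noise, the same vanishing cross term under $\mathcal{F}_{t+1}$, the mean--variance inequality on the stochastic-gradient increment, the $x$/$y$ split with $L_g$-Lipschitzness, and the final expansion $\|h_t^g\|^2 \le 2\|e_t^g\|^2 + 2\|\nabla_y g(x_t,y_t)\|^2$. Your factor bookkeeping ($2\to4\to8$) matches the paper exactly, and your explicit justification of the measurability of $e_t^g$ and of the mean--variance step only makes precise what the paper leaves implicit.
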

	
	\begin{proof}
		 From the definition of $e_t^g$ we have
		 \begin{align*}
		 & \mathbb{E}\|e_{t+1}^g\|^2  = \mathbb{E}\|h_{t+1}^g - \nabla_y g(x_{t+1}, y_{t+1})\|^2 \\
		  &  \overset{(a)}{=} \mathbb{E} \| \nabla_y g(x_{t+1}, y_{t+1},  \zeta_{t+1}) + (1 - \eta_{t+1}^g) \big(h_{t}^g - \nabla_y g(x_t , y_t ; \zeta_{t+1}) \big) - \nabla_y g(x_{t+1}, y_{t+1}) \|^2\\
		&  \overset{(b)}{=}   \mathbb{E}\big\| (1 - \eta_{t+1}^g) e_t^g  + \big( \nabla_y g(x_{t+1}, y_{t+1},  \zeta_{t+1})  - \nabla_y g(x_{t+1}, y_{t+1}) \big)
		  \\
		  & \qquad \qquad \qquad \qquad   \qquad \qquad \qquad \qquad   -  (1 - \eta_{t+1}^g) \big( \nabla_y g(x_t , y_t ; \zeta_{t+1}) - \nabla_y g(x_t, y_t) \big) \big\|^2 \\
		  & \overset{(c)}{=}  (1 - \eta_{t+1}^g)^2 \mathbb{E}\| e_t^g \|^2 \\
		  & +  \mathbb{E} \| \nabla_y g(x_{t+1}, y_{t+1},  \zeta_{t+1})  - \nabla_y g(x_{t+1}, y_{t+1})
		 -  (1 - \eta_{t+1}^g) \big( \nabla_y g(x_t , y_t ; \zeta_{t+1}) - \nabla_y g(x_t, y_t) \big) \|^2 \\
		 	  & \overset{(d)}{\leq}  (1 - \eta_{t+1}^g)^2 \mathbb{E}\| e_t^g \|^2  + 2 (\eta_{t+1}^g)^2 \sigma_g^2 + 2 (1 - \eta_{t+1}^g)^2 \mathbb{E} \| \nabla_y g(x_{t+1}, y_{t+1},  \zeta_{t+1})  -     \nabla_y g(x_t , y_t ; \zeta_{t+1})     \|^2\\
		 	    & \overset{(e)}{\leq}  (1 - \eta_{t+1}^g)^2 \mathbb{E}\| e_t^g \|^2  + 2 (\eta_{t+1}^g)^2 \sigma_g^2 \\
		 	    &  \qquad \qquad  \qquad   + 4 (1 - \eta_{t+1}^g)^2 \mathbb{E} \| \nabla_y g(x_{t+1}, y_{t+1},  \zeta_{t+1})  -     \nabla_y g(x_t , y_{t+1} ; \zeta_{t+1})     \|^2 \\
		 	    &  \qquad \qquad \qquad   \qquad   \qquad \qquad  + 4 (1 - \eta_{t+1}^g)^2 \mathbb{E} \| \nabla_y g(x_{t}, y_{t+1},  \zeta_{t+1})  -     \nabla_y g(x_{t} , y_t ; \zeta_{t+1})     \|^2\\
		 	     	    & \overset{(f)}{\leq}  (1 - \eta_{t+1}^g)^2 \mathbb{E}\| e_t^g \|^2  + 2 (\eta_{t+1}^g)^2 \sigma_g^2   + 4 (1 - \eta_{t+1}^g)^2 L_g^2 \mathbb{E} \|  x_{t+1}  -      x_t \|^2    + 4 (1 - \eta_{t+1}^g)^2 L_g^2 \mathbb{E} \|   y_{t+1}  -      y_t   \|^2\\
		 	     	     & \overset{(g)}{\leq}  (1 - \eta_{t+1}^g)^2 \mathbb{E}\| e_t^g \|^2  + 2 (\eta_{t+1}^g)^2 \sigma_g^2   + 4 (1 - \eta_{t+1}^g)^2 L_g^2 \alpha_t^2 \mathbb{E} \|  h_t^f \|^2    + 4 (1 - \eta_{t+1}^g)^2 L_g^2 \beta_t^2 \mathbb{E} \|   h_t^g   \|^2\\
		 	     	       & \overset{(h)}{\leq}  (1 - \eta_{t+1}^g)^2 \mathbb{E}\| e_t^g \|^2  + 2 (\eta_{t+1}^g)^2 \sigma_g^2   + 4 (1 - \eta_{t+1}^g)^2 L_g^2 \alpha_t^2 \mathbb{E} \|  h_t^f \|^2  \\
		 	     	       & \qquad \qquad \qquad \qquad\qquad \qquad\qquad   + 8 (1 - \eta_{t+1}^g)^2 L_g^2 \beta_t^2 \mathbb{E} \|   e_t^g   \|^2 +  8 (1 - \eta_{t+1}^g)^2 L_g^2 \beta_t^2 \mathbb{E} \|   \nabla_y g(x_t, y_t)   \|^2\\
		 	     	          & ~{\leq} \Big(  (1 - \eta_{t+1}^g)^2 + 8 (1 - \eta_{t+1}^g)^2 L_g^2 \beta_t^2
		 	     	          \Big) \mathbb{E}\| e_t^g \|^2  + 2 (\eta_{t+1}^g)^2 \sigma_g^2   + 4 (1 - \eta_{t+1}^g)^2 L_g^2 \alpha_t^2 \mathbb{E} \|  h_t^f \|^2  \\
		 	     	       & \qquad \qquad \qquad \qquad\qquad \qquad \qquad\qquad \qquad \qquad\quad \qquad\qquad   +  8 (1 - \eta_{t+1}^g)^2 L_g^2 \beta_t^2 \mathbb{E} \|   \nabla_y g(x_t, y_t)   \|^2,
		 \end{align*}
		 where equality $(a)$ uses the definition of hybrid gradient estimator \eqref{Eq: HybridGradEstimate_Inner}; $(b)$ uses the definition of $e_t^g$; $(c)$ uses the fact that conditioned on $\mathcal{F}_{t + 1} = \sigma \{ y_0, x_0, \ldots, y_t, x_t, y_{t + 1}, x_{t+1} \}$ 
		 \begin{align*}
		& \mathbb{E}  \big\langle  e_t^g  , \big( \nabla_y g(x_{t+1}, y_{t+1},  \zeta_{t+1})  - \nabla_y g(x_{t+1}, y_{t+1}) \big)  -  (1 - \eta_{t+1}^g) \big( \nabla_y g(x_t , y_t ; \zeta_{t+1}) - \nabla_y g(x_t, y_t) \big) \big\rangle \\
		 &   = 		 \mathbb{E}  \big\langle  e_t^g  , \underbrace{\mathbb{E} \big[ \big( \nabla_y g(x_{t+1}, y_{t+1},  \zeta_{t+1})  - \nabla_y g(x_{t+1}, y_{t+1}) \big)  -  (1 - \eta_{t+1}^g) \big( \nabla_y g(x_t , y_t ; \zeta_{t+1}) - \nabla_y g(x_t, y_t) \big) | \mathcal{F}_{t+1}  \big]}_{ = 0} \big\rangle \\
		 & \quad = 0.
		 \end{align*}
		 Inequality $(d)$ results from the application of \eqref{Eq: Sum_Norms} and Assumption \ref{Assump: Stochastic Grad}-(ii); $(e)$ again uses \eqref{Eq: Sum_Norms}; $(f)$ utilizes Assumption \ref{Assump: InnerFunction}; $(g)$ follows from Steps 5 and 7 of Algorithm \ref{Algo: Accelerated_STSA} and finally, $(h)$ follows from the application of \eqref{Eq: Sum_Norms} and the definition of $e_t^g$.
		 
		 Therefore, the lemma is proved. 
	\end{proof}

	\subsection{Descent in the potential function}  
	Let us define the potential function as:
	\begin{align}
	\label{Eq: PotentialFunction}
	V_{t} \coloneqq \ell(x_t) + \frac{2L}{3 \sqrt{2} L_y} \|y_t - y^\ast(x_t)\|^2  +  \frac{1}{\bar{c}_{\eta_f}}  \frac{\|e_t^f\|^2}{\alpha_{t-1}} + \frac{1}{\bar{c}_{\eta_g}} \frac{\|e_t^g\|^2}{\alpha_{t-1}}
	\end{align}
	where we define 
	\begin{align}
	\bar{c}_{\eta_f} \coloneqq \max \bigg\{ 36 L_K^2 , \frac{4 L_K^2  L_{\mu_g} (\mu_g + L_g) c_\beta^2}{L^2} \bigg\} \quad \text{and} \quad \bar{c}_{\eta_g} \coloneqq \max \bigg\{  36 L_g^2 , \frac{4 L_g^2  L_{\mu_g} (\mu_g + L_g) c_\beta^2}{L^2} \bigg\}.
	\end{align}
	with $L_{\mu_g}$ defined as $L_{\mu_g} \coloneqq \frac{\mu_g L_g}{\mu_g + L_g}$.
	
Next, we quantify the expected descent in the potential function $\mathbb{E}[V_{t+1} - V_t]$. 
	
	\begin{lem}
		\label{Lem: Decent_PotentialFn}
		Consider $V_t$ defined in \eqref{Eq: PotentialFunction}. Suppose the parameters of Algorithm \ref{Algo: Accelerated_STSA} are chosen as
		\begin{align*}
		\alpha_t \coloneqq \frac{1}{(w + t)^{1/3}} , ~\beta_{t} \coloneqq c_\beta \alpha_{t}, ~ \eta_{t + 1}^f \coloneqq  c_{\eta_f} \alpha_{t}^2 ,   ~\text{and} ~ \eta_{t + 1}^g \coloneqq  c_{\eta_g} \alpha_t^2 ~ \text{for all}~~t \in \{0,1, \ldots, T-1\}.
		\end{align*}
		with 
		\begin{align*}
		c_\beta \coloneqq \frac{6 \sqrt{2}  L_y L}{L_{\mu_g}}, ~ c_{\eta_f}  \coloneqq \frac{1}{3  L_f} + \bar{c}_{\eta_f}~    \text{and} ~ c_{\eta_g} \coloneqq  \frac{1}{3   L_f} + 8 L_g^2 c_\beta^2 + \bigg[ \frac{8 L^2}{L_{\mu_g}^2} + \frac{2L^2}{L_{\mu_g} (\mu_g + L_g)} \bigg] \bar{c}_{\eta_g} ,
		\end{align*}
	where $L_{\mu_g} \coloneqq \frac{\mu_g L_g}{\mu_g + L_g}$ and
		\begin{align*}
		\bar{c}_{\eta_f} = \max \bigg\{ 36 L_K^2 , \frac{4 L_K^2  L_{\mu_g} (\mu_g + L_g) c_\beta^2}{L^2} \bigg\} \quad \text{and} \quad \bar{c}_{\eta_g}  = \max\bigg\{    36 L_g^2 , \frac{4 L_g^2 L_{\mu_g} (\mu_g + L_g) c_\beta^2}{L^2} \bigg\}.
		\end{align*}
 and the parameters 
 \begin{align*}
 \gamma_t  \coloneqq  \frac{\beta_t L_{\mu_g} / 2}{1 - \beta_t L_{\mu_g}} \qquad  \text{and} \qquad \delta_t  \coloneqq \frac{\beta_t L_{\mu_g}}{1 - 2\beta_t L_{\mu_g}}.
 \end{align*}
 Then the iterates generated by Algorithm \ref{Algo: Accelerated_STSA} when the outer problem is non-convex satisfy:
		\begin{align*}
		\mathbb{E} [ V_{t+1} - V_t ]  \leq - \frac{\alpha_{t}}{2} \mathbb{E} \| \nabla \ell(x_t)\|^2   +  2 \alpha_{t} \|B_t\|^2  +  \frac{2   (\eta_{t+1}^f)^2}{\bar{c}_{\eta_f} \alpha_t} \sigma_f^2 +  \frac{2   (\eta_{t+1}^g)^2}{\bar{c}_{\eta_g} \alpha_t} \sigma_g^2.
		\end{align*}
		for all $t \in \{0, 1, \ldots, T-1\}$
	\end{lem}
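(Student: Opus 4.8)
The plan is to treat the claimed one-step descent as a bookkeeping exercise that fuses the four component lemmas (Lemmas~\ref{Lem: Smoothness_l_Main}, \ref{Lem: InnerIterates_Descent}, \ref{Lem: Descent_grad_error}, \ref{lem: Descent_Grad_Error_Inner}) against the weights in $V_t$. First I would write $V_{t+1}-V_t$ as the weighted sum of four increments: $\ell(x_{t+1})-\ell(x_t)$, $\frac{2L}{3\sqrt2 L_y}\big(\|y_{t+1}-y^\ast(x_{t+1})\|^2-\|y_t-y^\ast(x_t)\|^2\big)$, $\frac{1}{\bar c_{\eta_f}}\big(\|e_{t+1}^f\|^2/\alpha_t-\|e_t^f\|^2/\alpha_{t-1}\big)$, and $\frac{1}{\bar c_{\eta_g}}\big(\|e_{t+1}^g\|^2/\alpha_t-\|e_t^g\|^2/\alpha_{t-1}\big)$. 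Taking expectations and substituting the four lemmas turns the right-hand side into a linear combination of $\|\nabla\ell(x_t)\|^2$, $\|y_t-y^\ast(x_t)\|^2$, $\|h_t^f\|^2$, $\|\nabla_y g(x_t,y_t)\|^2$, $\|e_t^f\|^2$, $\|e_t^g\|^2$, together with the survivors $\|B_t\|^2$, $\sigma_f^2$, $\sigma_g^2$. The proof then reduces to showing the net coefficient of each of the first six quantities is nonpositive under the stated parameter choices, leaving exactly the desired terms.

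The one genuinely non-mechanical step is the mismatched normalizers $1/\alpha_{t}$ versus $1/\alpha_{t-1}$ on the error terms. For $e_t^f$ I would use $(1-\eta_{t+1}^f)^2\le 1-\eta_{t+1}^f=1-c_{\eta_f}\alpha_t^2$ so that the combined coefficient of $\|e_t^f\|^2$ is at most $\alpha_t+\frac{1}{\bar c_{\eta_f}}\big(\frac{1}{\alpha_t}-c_{\eta_f}\alpha_t-\frac{1}{\alpha_{t-1}}\big)$; since $c_{\eta_f}=\frac{1}{3L_f}+\bar c_{\eta_f}$ this collapses to $\frac{1}{\bar c_{\eta_f}}\big(\frac{1}{\alpha_t}-\frac{1}{\alpha_{t-1}}-\frac{\alpha_t}{3L_f}\big)$, so nonpositivity follows once $\frac{1}{\alpha_t}-\frac{1}{\alpha_{t-1}}\le\frac{\alpha_t}{3L_f}$. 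Because $\alpha_t=(w+t)^{-1/3}$, concavity of $s\mapsto s^{1/3}$ gives $\frac{1}{\alpha_t}-\frac{1}{\alpha_{t-1}}\le \tfrac13(w+t-1)^{-2/3}$, and $w\ge 27L_f^3$ makes this at most $\frac{\alpha_t}{3L_f}$. The identical manipulation with $c_{\eta_g}$ handles $e_t^g$, the extra additive terms in $c_{\eta_g}$ being earmarked to pay for the $\|e_t^g\|^2$ contributions routed through Lemmas~\ref{Lem: InnerIterates_Descent}, \ref{Lem: Descent_grad_error}, \ref{lem: Descent_Grad_Error_Inner}.

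The remaining cancellations are where the constants were reverse-engineered, and I would verify them one quantity at a time. For $\|y_t-y^\ast(x_t)\|^2$ the clean identity $(1+\gamma_t)(1+\delta_t)(1-2\beta_tL_{\mu_g})=1-\tfrac12\beta_tL_{\mu_g}$, which the chosen $\gamma_t,\delta_t$ produce exactly, turns the contraction of Lemma~\ref{Lem: InnerIterates_Descent} into $-\frac{L}{3\sqrt2 L_y}\beta_tL_{\mu_g}$; with $\beta_t=c_\beta\alpha_t$ and $c_\beta=\frac{6\sqrt2 L_yL}{L_{\mu_g}}$ this equals $-2L^2\alpha_t$, exactly offsetting the $+2\alpha_tL^2$ from Lemma~\ref{Lem: Smoothness_l_Main}. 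For $\|\nabla_yg(x_t,y_t)\|^2$ the negative drift $-\frac{2L}{3\sqrt2 L_y}(1+\gamma_t)(1+\delta_t)\big(\frac{2\beta_t}{\mu_g+L_g}-\beta_t^2\big)$ is balanced against the $8L_K^2\beta_t^2$ and $8L_g^2\beta_t^2$ terms, the bounds $\bar c_{\eta_f},\bar c_{\eta_g}\ge \frac{4L_K^2 L_{\mu_g}(\mu_g+L_g)c_\beta^2}{L^2}$ being precisely what keeps the positive part below the drift. For $\|h_t^f\|^2$ the coefficient $-\frac{\alpha_t}{2}(1-\alpha_tL_f)$ must dominate three positive contributions (from the $L_y^2\alpha_t^2$, $4L_K^2\alpha_t^2$, $4L_g^2\alpha_t^2$ terms); the choices $\bar c_{\eta_f}\ge 36L_K^2$, $\bar c_{\eta_g}\ge 36L_g^2$ and the value of $c_\beta$ make each at most $\alpha_t/9$, and $w$ large enough that $\alpha_tL_f\le\frac13$ closes the sum.

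The main obstacle is purely the accounting: each of the five cancellations is tight and relies on the interlocking definitions of $c_\beta,\bar c_{\eta_f},\bar c_{\eta_g},c_{\eta_f},c_{\eta_g}$ and the $w$-thresholds, and the $\|e_t^g\|^2$ ledger in particular must simultaneously absorb contributions through Lemma~\ref{Lem: InnerIterates_Descent} (via $1+1/\delta_t$, which behaves like $1/(\beta_tL_{\mu_g})$), Lemma~\ref{Lem: Descent_grad_error}, and Lemma~\ref{lem: Descent_Grad_Error_Inner} while still closing the $1/\alpha_{t-1}$ telescoping. I would therefore maintain an explicit per-quantity coefficient table, bound the auxiliary factors $(1+\gamma_t)$ and $(1+1/\delta_t)$ using $\beta_tL_{\mu_g}\le\tfrac12$ (from $w\ge 8L_{\mu_g}^3c_\beta^3$) and $\beta_t\le\frac{1}{\mu_g+L_g}$ (from $w\ge(\mu_g+L_g)^3c_\beta^3$), and confirm $\eta_{t+1}^f,\eta_{t+1}^g\le1$ via $w\ge c_{\eta_f}^{3/2},c_{\eta_g}^{3/2}$; once every coefficient is checked nonpositive, collecting the survivors yields exactly the stated bound.
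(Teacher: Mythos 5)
Your proposal is correct and takes essentially the same route as the paper's own proof: the identical four-lemma decomposition weighted by $V_t$, the same exact choice of $\gamma_t,\delta_t$ yielding $(1+\gamma_t)(1+\delta_t)(1-2\beta_t L_{\mu_g})=1-\tfrac12\beta_t L_{\mu_g}$, the same telescoping bound $\tfrac{1}{\alpha_t}-\tfrac{1}{\alpha_{t-1}}\le\tfrac{\alpha_t}{3L_f}$ via concavity and $w\ge 27L_f^3$, and the same per-quantity coefficient cancellations engineered through $c_\beta$, $\bar c_{\eta_f}$, and $\bar c_{\eta_g}$ (e.g., the $-2L^2\alpha_t$ versus $+2L^2\alpha_t$ offset on the tracking error and the three $\alpha_t/9$ contributions against $-\tfrac{\alpha_t}{2}(1-\alpha_t L_f)$ on $\|h_t^f\|^2$). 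No gaps.
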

	\begin{proof}
		We have from Lemma \ref{Lem: InnerIterates_Descent}
	 \begin{align}
	 \label{Eq: InnerIterates_PotFn}
	 		\mathbb{E}\|y_{t+1} - y^\ast(x_{t + 1})\|^2 - 	\mathbb{E}\|y_{t} - y^\ast(x_{t})\|^2 & \leq  \bigg[ (1 + \gamma_t) (1 + \delta_t) \bigg(1 -  2 \beta_t \frac{\mu_g L_g}{\mu_g + L_g } \bigg)  - 1 \bigg] \mathbb{E} \|y_t - y^\ast(x_{t})\|^2 \nonumber\\
	 		&   \qquad     - 
	 	(1 + \gamma_t) (1 + \delta_t) \bigg( \frac{2 \beta_t}{\mu_g + L_g} - \beta_t^2  \bigg) \mathbb{E} \|\nabla_y g(x_t , y_t)\| \nonumber\\
	 	& \qquad    + (1 + \gamma_t) \bigg( 1 + \frac{1}{\delta_t} \bigg)	\beta_t^2 \mathbb{E}  \| e_t^g \|^2 + \bigg( 1 + \frac{1}{\gamma_t} \bigg) L_y^2 \alpha_{t}^2 \mathbb{E}   \| h_{t}^f \|^2 .
	 \end{align}
	 Let us consider coefficient of the first term of \eqref{Eq: InnerIterates_PotFn} above, choosing $\gamma_t$ and $\delta_t$ such that we have 
	 \begin{align}
	 \label{Eq: gamma_delta}
	   (1 + \gamma_t) (1 + \delta_t)  (1 -  2 \beta_t L_{\mu_g} )   = 1 -  \frac{\beta_t L_{\mu_g}}{2}  
	 \end{align}
	 where we define $L_{\mu_g} \coloneqq \frac{\mu_g L_g}{\mu_g + L_g}$. First we choose $\gamma_t$ such that we have
	 	 \begin{align*}
 (1 + \delta_t)  (1 -  2 \beta_t L_{\mu_g} )   = 1 -  \beta_t L_{\mu_g} \quad \Rightarrow \quad	 1 + \delta_t    =  \frac{1 -  \beta_t L_{\mu_g}}{  1 -  2 \beta_t L_{\mu_g} } \quad \Rightarrow \quad  \delta_t = \frac{\beta_t L_{\mu_g}}{1 - 2\beta_t L_{\mu_g}}
	 \end{align*}
Moreover, this implies that we have:
\begin{align*}
1 + \frac{1}{\delta_t} = 1 + \frac{1 - 2\beta_t L_{\mu_g} }{\beta_t L_{\mu_g}} \leq \frac{1}{\beta_t L_{\mu_g}}.
\end{align*}	
Using the definition of $\delta_t$ in \eqref{Eq: gamma_delta} we
	 \begin{align*}
	  (1 + \gamma_t)  (1 -  \beta_t L_{\mu_g} )   = 1 -  \frac{\beta_t L_{\mu_g}}{2} \quad \Rightarrow \quad	 1 + \gamma_t    =  \frac{1 -  \frac{\beta_t L_{\mu_g}}{2}}{  1 -   \beta_t L_{\mu_g} } \quad \Rightarrow \quad  \gamma_t = \frac{\beta_t L_{\mu_g} / 2}{1 - \beta_t L_{\mu_g}}
	 \end{align*}
	 Moreover, this implies that we have:
	 \begin{align*}
	 1 + \frac{1}{\gamma_t} = 1 + \frac{1 - \beta_t L_{\mu_g} }{\beta_t L_{\mu_g}/2} \leq \frac{2}{\beta_t L_{\mu_g}}.
	 \end{align*}	
 Substituting the above bounds in \eqref{Eq: InnerIterates_PotFn}, we get
  \begin{align*}
 \mathbb{E}\|y_{t+1} - y^\ast(x_{t + 1})\|^2 - 	\mathbb{E}\|y_{t} - y^\ast(x_{t})\|^2 & \leq  - \frac{\beta_t L_{\mu_g}}{2}  \mathbb{E} \|y_t - y^\ast(x_{t})\|^2     - \bigg( \frac{2 \beta_t}{\mu_g + L_g} - \beta_t^2  \bigg) \mathbb{E} \|\nabla_y g(x_t , y_t)\| \nonumber\\
 & \qquad \qquad \qquad \qquad \quad   +    \frac{2}{\beta_t L_{\mu_g}} 	\beta_t^2 \mathbb{E}  \| e_t^g \|^2 + \frac{2}{\beta_t L_{\mu_g}} L_y^2 \alpha_{t}^2 \mathbb{E}   \| h_{t}^f \|^2.
 \end{align*}
Choosing $\beta_t \leq \frac{1}{\mu_g + L_g}$ we get 
   \begin{align*}
 \mathbb{E}\|y_{t+1} - y^\ast(x_{t + 1})\|^2 - 	\mathbb{E}\|y_{t} - y^\ast(x_{t})\|^2 & \leq  - \frac{\beta_t L_{\mu_g}}{2}  \mathbb{E} \|y_t - y^\ast(x_{t})\|^2     -   \frac{\beta_t}{\mu_g + L_g}   \mathbb{E} \|\nabla_y g(x_t , y_t)\| \nonumber\\
 & \qquad \qquad \qquad \qquad \quad   +    \frac{2}{\beta_t L_{\mu_g}} 	\beta_t^2 \mathbb{E}  \| e_t^g \|^2 + \frac{2}{\beta_t L_{\mu_g}} L_y^2 \alpha_{t}^2 \mathbb{E}   \| h_{t}^f \|^2.
 \end{align*}
 Using the definition of $\beta_t = c_\beta \alpha_t$ and multiplying both sides by $\frac{4L^2}{c_\beta L_{\mu_g}}$ we get
   \begin{align*}
\frac{4L^2}{c_\beta L_{\mu_g}}  \mathbb{E} \big[ \|y_{t+1} - y^\ast(x_{t + 1})\|^2 - 	 \|y_{t} - y^\ast(x_{t})\|^2 \big] & \leq  - 2 \alpha_t L^2 \mathbb{E} \|y_t - y^\ast(x_{t})\|^2     -   \frac{4L^2 \alpha_t }{L_{\mu_g } (\mu_g + L_g)}   \mathbb{E} \|\nabla_y g(x_t , y_t)\| \nonumber\\
 & \qquad \qquad \qquad \qquad \quad   +    \frac{8L^2 \alpha_t}{L_{\mu_g}^2}   \mathbb{E}  \| e_t^g \|^2 + \frac{8 L_y^2 L^2 \alpha_t}{c_\beta^2 L_{\mu_g}^2}   \mathbb{E}   \| h_{t}^f \|^2.
 \end{align*}
Finally, choosing $c_\beta = \frac{6 \sqrt{2}  L_y L}{L_{\mu_g}}$ such that $\frac{8 L_y^2 L^2}{c_\beta^2 L_{\mu_g}^2} = \frac{1}{9}$ 
 \begin{align}
 \label{Eq: InnerIterates_PotFn_Main}
 \frac{ 2L}{3 \sqrt{2}   L_y}  \mathbb{E} \big[ \|y_{t+1} - y^\ast(x_{t + 1})\|^2 - 	 \|y_{t} - y^\ast(x_{t})\|^2 \big] & \leq  - 2 \alpha_t L^2 \mathbb{E} \|y_t - y^\ast(x_{t})\|^2     -   \frac{4L^2 \alpha_t }{L_{\mu_g } (\mu_g + L_g)}   \mathbb{E} \|\nabla_y g(x_t , y_t)\| \nonumber\\
 & \qquad \qquad \qquad \qquad \qquad \qquad    +    \frac{8L^2 \alpha_t}{L_{\mu_g}^2}   \mathbb{E}  \| e_t^g \|^2 + \frac{\alpha_t}{9}   \mathbb{E}   \| h_{t}^f \|^2.
 \end{align}

 Next, we have from Lemma \ref{Lem: Descent_grad_error}
 	\begin{align}
 	\label{Eq: Grad_error_OuterFn_PotFn}
\frac{ \mathbb{E} \|e_{t+1}^f\|^2}{\alpha_t} -   \frac{\mathbb{E} \|e_{t+1}^f\|^2}{\alpha_{t-1}} &   \leq \bigg[  \frac{(1 - \eta_{t+1}^f)^2}{\alpha_t}  - \frac{1}{\alpha_{t-1}} \bigg] \mathbb{E} \| e_t^f \|^2 +  \frac{2 (\eta_{t+1}^f )^2}{\alpha_t} \sigma_f^2  +  4  L_K^2 \alpha_t \mathbb{E}\|h_t^f\|^2 \nonumber\\
 &  \qquad \qquad \qquad  \qquad \qquad \qquad   + \frac{8 L_K^2 \beta_t^2}{\alpha_t} \mathbb{E}\|e_t^g\|^2 + \frac{8   L_K^2 \beta_t^2 }{\alpha_t} \mathbb{E}\|\nabla_y g(x_t, y_t)\|^2,
 \end{align}
 where we have utilized the fact that $0 < 1 - \eta_t < 1$ for all $t \in \{0, 1, \ldots, T-1 \}$. Now we consider the coefficient of the first term on the right hand side of \eqref{Eq: Grad_error_OuterFn_PotFn}, we have
 \begin{align}
 \label{Eq: 1Coeff_GradError_Descent}
  \frac{(1 - \eta_{t+1}^f)^2}{\alpha_t}  - \frac{1}{\alpha_{t-1}}  \leq \frac{1}{\alpha_t} -   \frac{\eta_{t+1}^f}{\alpha_t}  - \frac{1}{\alpha_{t-1}}.
 \end{align}
 Using the definition of $\alpha_t$ we have 
 \begin{align*}
 	\frac{1}{\alpha_t} - \frac{1}{\alpha_{t-1}} & =  {(w + t)^{1/3}}  -  {(w + t - 1)^{1/3}} ] \overset{(a)}{\leq} \frac{1}{3   (w + t - 1)^{2/3}} \overset{(b)}{\leq}  \frac{1}{3   (w/2 + t )^{2/3}} \\
 	& =  \frac{2^{2/3}}{3   (w + 2t )^{2/3}} \leq  \frac{2^{2/3}}{3   (w + t )^{2/3}} \overset{(c)}{\leq}  \frac{2^{2/3}}{3  } \alpha_t^2	\overset{(d)}{\leq}  \frac{\alpha_t}{3   L_f},
 \end{align*}
 where $(a)$ follows from $(x + y)^{1/3} - x^{1/3} \leq y/(3 x^{2/3})$; $(b)$ results from the fact that we choose $w \geq 2$ hence $1 \leq w/2$; $(c)$ results from the definition of $\alpha_t$ and $(d)$ uses the fact that we choose $\alpha_t \leq 1/3L_f$.  Substituting in \eqref{Eq: 1Coeff_GradError_Descent} and using $\eta_{t+1}^f = c_{\eta_f} \alpha_t^2$, we get
 \begin{align*}
  \frac{(1 - \eta_{t+1}^f)^2}{\alpha_t}  - \frac{1}{\alpha_{t-1}}  & \leq    \frac{\alpha_t}{3   L_f}  - c_{\eta_f} \alpha_t   \leq -\bar{c}_{\eta_f} \alpha_t,
 \end{align*}
 which follows from the choice  
 $$ c_{\eta_f}  = \frac{1}{3   L_f} + \bar{c}_{\eta_f}\quad  \text{with} \quad \bar{c}_{\eta_f} = \max \bigg\{ 36 L_K^2 , \frac{4 L_K^2  L_{\mu_g} (\mu_g + L_g) c_\beta^2}{L^2} \bigg\} .$$
 
 Substiuting in \eqref{Eq: Grad_error_OuterFn_PotFn}
 \begin{align}
 \label{Eq: Grad_error_OuterFn_PotFn_Main}
 \frac{1}{\bar{c}_{\eta_f}}  \mathbb{E} \bigg[ \frac{ \|e_{t+1}^f\|^2}{\alpha_t} -   \frac{ \|e_{t+1}^f\|^2}{\alpha_{t-1}} \bigg] &   \leq  - \alpha_t \mathbb{E} \| e_t^f \|^2 +  \frac{2 (\eta_{t+1}^f )^2}{\bar{c}_{\eta_f}  \alpha_t} \sigma_f^2  +  \frac{ \alpha_t}{9} \mathbb{E}\|h_t^f\|^2    + \frac{2 L^2}{L_{\mu_g} (\mu_g + L_g)} \alpha_t \mathbb{E}\|e_t^g\|^2 \nonumber\\
 & \qquad \qquad \qquad \qquad \qquad \qquad \qquad  \qquad +\frac{2 L^2}{L_{\mu_g} (\mu_g + L_g)} \alpha_t \mathbb{E}\|\nabla_y g(x_t, y_t)\|^2,
 \end{align}
 Next, from Lemma \ref{lem: Descent_Grad_Error_Inner}, we have
 \begin{align}
 \label{Eq: Grad_error_InnerFn_PotFn}
 \frac{	\mathbb{E} \|e_{t+1}^g\|^2}{\alpha_t} -\frac{	\mathbb{E} \|e_{t}^g\|^2}{\alpha_{t-1}}	& \leq \bigg[  \frac{(1 - \eta_{t+1}^g)^2 + 8 (1 - \eta_{t+1}^g)^2 L_g^2 \beta_t^2}{\alpha_t}
 - \frac{1}{\alpha_{t-1}}\bigg] \mathbb{E}\| e_t^g \|^2  + \frac{2 (\eta_{t+1}^g)^2}{\alpha_t} \sigma_g^2  \nonumber \\
 &  \qquad \qquad \qquad\qquad \qquad \qquad \quad   + 4   L_g^2 \alpha_t \mathbb{E} \|  h_t^f \|^2     +  \frac{8  L_g^2 \beta_t^2}{\alpha_t} \mathbb{E} \|   \nabla_y g(x_t, y_t)   \|^2
 \end{align} 
 where we have utilized the fact that $0 < 1 - \eta_t^g \leq 1$ for all $t \in \{0,1, \ldots, T-1\}$. Let us consider the coefficient of the first term on the right hand side of \eqref{Eq: Grad_error_InnerFn_PotFn} we have
 \begin{align*}
 \frac{(1 - \eta_{t+1}^g)^2 + 8 (1 - \eta_{t+1}^g)^2 L_g^2 \beta_t^2}{\alpha_t}
 - \frac{1}{\alpha_{t-1}}  &\leq   \frac{(1 - \eta_{t+1}^g) }{\alpha_t}   \big(1+ 8   L_g^2 \beta_t^2 \big) - \frac{1}{\alpha_{t-1}} \\
 & = \frac{1}{\alpha_t} -   \frac{1}{\alpha_{t-1}} + \frac{8 L_g^2 \beta_t^2}{\alpha_t} - c_{\eta_g} \alpha_t (1 + 8 L_g^2 \beta_t^2),
 \end{align*}
 using the fact that from earlier we have $\frac{1}{\alpha_t} - \frac{1}{\alpha_{t-1}} \leq \frac{\alpha_t}{3 L_f}$ and the definition of $\beta_t = c_\beta \alpha_t$, we have
 \begin{align*}
 	\frac{(1 - \eta_{t+1}^g)^2 + 8 (1 - \eta_{t+1}^g)^2 L_g^2 \beta_t^2}{\alpha_t}
 	- \frac{1}{\alpha_{t-1}} \leq  \frac{\alpha_t}{3  L_f} + 8 L_g^2 c_\beta^2 \alpha_t - c_{\eta_g} \alpha_t,
 \end{align*}
 Next choosing $c_{\eta_g}$ as
 \begin{align*}
 c_{\eta_g} = \frac{1}{3  L_f} + 8 L_g^2 c_\beta^2 + \bigg[ \frac{8 L^2}{L_{\mu_g}^2} + \frac{2L^2}{L_{\mu_g} (\mu_g + L_g)} \bigg] \bar{c}_{\eta_g} \quad \text{with} \quad \bar{c}_{\eta_g}  = \max\bigg\{    36 L_g^2 , \frac{4 L_g^2 L_{\mu_g} (\mu_g + L_g) c_\beta^2}{L^2} \bigg\}.
 \end{align*}
 Therefore, we get
  \begin{align*}
 \frac{(1 - \eta_{t+1}^g)^2 + 8 (1 - \eta_{t+1}^g)^2 L_g^2 \beta_t^2}{\alpha_t}
 - \frac{1}{\alpha_{t-1}} \leq  - \bigg[  \frac{8 L^2}{L_{\mu_g}^2} + \frac{2L^2}{L_{\mu_g} (\mu_g + L_g)}  \bigg]  \bar{c}_{\eta_g}  \alpha_t,
 \end{align*}
 Finally, replacing in \eqref{Eq: Grad_error_InnerFn_PotFn} we get
 \begin{align}
 \label{Eq: Grad_error_InnerFn_PotFn_Main}
 \frac{1}{\bar{c}_{\eta_g}} 	\mathbb{E}\bigg[ \frac{ \|e_{t+1}^g\|^2}{\alpha_t} -\frac{  \|e_{t}^g\|^2}{\alpha_{t-1}} \bigg]	& \leq  - \bigg[  \frac{8 L^2}{L_{\mu_g}^2} + \frac{2L^2}{L_{\mu_g} (\mu_g + L_g)}  \bigg] \alpha_t \mathbb{E}\| e_t^g \|^2  + \frac{2 (\eta_{t+1}^g)^2}{\bar{c}_{\eta_g} \alpha_t} \sigma_g^2  \nonumber \\
  &  \qquad \qquad \qquad\qquad \qquad \quad \quad   + \frac{\alpha_t}{9} \mathbb{E} \|  h_t^f \|^2     +  \frac{2L^2}{ L_{\mu_g} (\mu_g + L_g)} \alpha_{t} \mathbb{E} \|   \nabla_y g(x_t, y_t)   \|^2
 \end{align}
 Finally, adding  \eqref{Eq: InnerIterates_PotFn_Main}, \eqref{Eq: Grad_error_OuterFn_PotFn_Main}, \eqref{Eq: Grad_error_InnerFn_PotFn_Main}  and the result of Lemma \ref{Lem: Smoothness_l_Main} with $\alpha_t \leq 1/3L_f$, we get
 	\begin{align*}
 \mathbb{E} [ V_{t+1} - V_t ]  \leq - \frac{\alpha_{t}}{2} \mathbb{E} \| \nabla \ell(x_t)\|^2   +  2 \alpha_{t} \|B_t\|^2  +  \frac{2   (\eta_{t+1}^f)^2}{\bar{c}_{\eta_f} \alpha_t} \sigma_f^2 +  \frac{2   (\eta_{t+1}^g)^2}{\bar{c}_{\eta_g} \alpha_t} \sigma_g^2.
 \end{align*}
 Therefore, we have the statement of the Lemma. 
 
 \subsection{Proof of Theorem \ref{Thm: Convergence_NC}}
 Summing the result of Lemma \ref{Lem: Decent_PotentialFn} for $t = 0$ to $T-1$, dividing by $T$ on both sides and using the definition $\eta_{t+1}^f \coloneqq c_{\eta_f} \alpha_t^2$ and $\eta_{t+1}^g \coloneqq c_{\eta_g} \alpha_t^2$  we get
 \begin{align}
 \label{Eq: PotFn_Summation_Alpha}
	\frac{\mathbb{E} [ V_{T} - V_0 ] }{T} \leq - \frac{1}{T}  \sum_{t = 0}^{T - 1} \frac{\alpha_{t}}{2} \mathbb{E} \| \nabla \ell(x_t)\|^2   +  \frac{2}{T}  \sum_{t = 0}^T \alpha_{t} \|B_t\|^2  +  \frac{2   c_{\eta_f}^2 \sigma_f^2}{\bar{c}_{\eta_f} } \sum_{t = 0}^{T - 1} \alpha_t^3+  \frac{2   c_{\eta_g}^2 \sigma_g^2}{\bar{c}_{\eta_g} }  \sum_{t = 0}^{T - 1}  \alpha_t^3.
 \end{align}
 Next considering $\sum_{t = 0}^{T - 1} \alpha_t$ in the last two terms on the right hand side of \eqref{Eq: PotFn_Summation_Alpha}, we have from the definition of $\alpha_t$ that
 \begin{align*}
\sum_{t = 0}^{T - 1} \alpha_t^3 &= \sum_{t = 0}^{T - 1}  \frac{1}{w + t}  \overset{(a)}{\leq}  \sum_{t = 0}^{T - 1}  \frac{1}{1 + t} \leq   \log (T + 1)
 \end{align*}
 where inequality $(a)$ results from the fact that we choose $w \geq 1$. Substituting the above in \eqref{Eq: PotFn_Summation_Alpha} we get
 \begin{align*}
 \frac{\mathbb{E} [ V_{T} - V_0 ] }{T} \leq - \frac{1}{T}  \sum_{t = 0}^{T - 1} \frac{\alpha_{t}}{2} \mathbb{E} \| \nabla \ell(x_t)\|^2   +  \frac{2}{T}  \sum_{t = 0}^T \alpha_{t} \|B_t\|^2  +  \frac{2   c_{\eta_f}^2  }{\bar{c}_{\eta_f} } \frac{\log (T + 1)}{T}  \sigma_f^2   +  \frac{2   c_{\eta_g}^2 }{\bar{c}_{\eta_g} }   \frac{\log (T + 1)}{T} \sigma_g^2
 \end{align*}
 Rearranging the terms we get
  \begin{align*}
  \frac{1}{T}  \sum_{t = 0}^{T - 1} \frac{\alpha_{t}}{2} \mathbb{E} \| \nabla \ell(x_t)\|^2\leq   \frac{\mathbb{E} [ V_0 - \ell^\ast ] }{T}   +  \frac{2}{T}  \sum_{t = 0}^T \alpha_{t} \|B_t\|^2  +  \frac{2   c_{\eta_f}^2 }{\bar{c}_{\eta_f} }  \frac{\log (T + 1)}{T}  \sigma_f^2   +  \frac{2   c_{\eta_g}^2   }{\bar{c}_{\eta_g} }  \frac{\log (T + 1)}{T}  \sigma_g^2
 \end{align*}
 Using the fact that $\alpha_t$ is decreasing in $t$ we have $\alpha_T \leq \alpha_t$ for all $t \in \{0,1, \ldots, T - 1\}$ and multiplying by $2/\alpha_T$ on both sides we get
 \begin{align*}
 \frac{1}{T}  \sum_{t = 0}^{T - 1}   \mathbb{E} \| \nabla \ell(x_t)\|^2\leq   \frac{2 \mathbb{E} [  V_0 - \ell^\ast ] }{\alpha_T T}   +  \frac{4}{\alpha_T T}  \sum_{t = 0}^T \alpha_{t} \|B_t\|^2  +  \frac{4   c_{\eta_f}^2   }{\bar{c}_{\eta_f} }  \frac{\log (T + 1)}{\alpha_T T}  \sigma_f^2   +  \frac{4 c_{\eta_g}^2  }{\bar{c}_{\eta_g} }  \frac{\log (T + 1)}{ \alpha_T T}  \sigma_g^2
 \end{align*}
 Finally, we have from the definition of the Potential function
 \begin{align*}
 \mathbb{E}[V_0] & \coloneqq \mathbb{E}\bigg[ \ell(x_0) + \frac{2L}{3\sqrt{2}L_y } \| y_0 - y^\ast(x_0)\|^2  + \frac{1}{\bar{c}_{\eta_f}} \frac{\| e_0^f\|^2}{\alpha_{-1}}  + \frac{1}{\bar{c}_{\eta_g}} \frac{\| e_0^g\|^2}{\alpha_{-1}} \bigg] \\
 & \leq \ell(x_0) + \frac{2L}{3\sqrt{2}L_y } \| y_0 - y^\ast(x_0)\|^2  +   \frac{\sigma_f^2}{\bar{c}_{\eta_f} \alpha_{-1}}  +  \frac{\sigma_g^2}{\bar{c}_{\eta_g} \alpha_{-1}} ,
 \end{align*}
 which follows from the assumption and the definition of $h_t^f$ and $h_t^g$. Therefore, we have
  \begin{align*}
 \frac{1}{T}  \sum_{t = 0}^{T - 1}  \| \nabla \ell(x_t)\|^2 & \leq   \frac{2   (\ell(x_0) - \ell^\ast )}{\alpha_T T}   + \frac{4L}{3\sqrt{2}L_y }   \frac{\| y_0 - y^\ast(x_0)\|^2}{\alpha_T T}  +   \frac{2}{\bar{c}_{\eta_f} \alpha_{-1}}   \frac{\sigma_f^2}{\alpha_T T} +  \frac{2}{\bar{c}_{\eta_g} \alpha_{-1}} \frac{\sigma_g^2}{\alpha_T T} \\ 
 & + \frac{4}{\alpha_T T}  \sum_{t = 0}^T \alpha_{t} \|B_t\|^2  +  \frac{4   c_{\eta_f}^2  }{\bar{c}_{\eta_f} }  \frac{\log (T + 1)}{\alpha_T T}  \sigma_f^2   +  \frac{4 c_{\eta_g}^2 }{\bar{c}_{\eta_g} }  \frac{\log (T + 1)}{ \alpha_T T}  \sigma_g^2
 \end{align*}
 Finally, we have from the definition of $\alpha_T \coloneqq 1/(w + T)^{1/3}$ and $\alpha_1 = \alpha_0$, moreover using the fact that for the choice of $K = (L_g/\mu_g)\log (C_{g_{xy}} C_{f_y} T / \mu_g)$ stochastic Hessian samples of $\nabla^2_{yy} g(x, y)$ we have $\|B_t\| = 1/T$, we get
 {
 \begin{align*}
  	\mathbb{E} \| \nabla \ell(x_a(T))\|^2  \leq \mathcal{O} \bigg( \frac{\ell(x_0) - \ell^\ast}{T^{2/3}} \bigg) + {\mathcal{O}} \bigg( \frac{\|y_0 - y^\ast(x_0) \|^2}{T^{2/3}} \bigg) + \tilde{\mathcal{O}} \bigg( \frac{\sigma_f^2}{T^{2/3}} \bigg) + \tilde{\mathcal{O}} \bigg( \frac{\sigma_g^2}{T^{2/3}} \bigg) .
 \end{align*}}
 Hence, the theorem is proved. 
	\end{proof}

		\section{Proof of Theorem \ref{Thm: Convergence_SC_Fixed}: strongly-convex outer objective}
	\label{Sec: Appendix_Thm_SC}

	To prove Theorem \ref{Thm: Convergence_SC_Fixed}, we utilize the descent results obtained for the proof of Theorem \ref{Thm: Convergence_NC} in Appendix \ref{Sec: Appendix_Thm_NC}. The proof follows similar structure as the proof of non-convex case. We first consider the descent achieved by the consecutive iterates generated by Algorithm \ref{Algo: Accelerated_STSA} when the outer function is strongly-convex and smooth.
	
	\subsection{Descent in the function value} 
	\begin{lem}
		\label{Lem: Smoothness_l_main_SC}
		For strongly-convex and smooth $\ell(\cdot)$, with $e_t^f$ defined as: $e_t^f \coloneqq h_t^f - \bar{\nabla} f(x_t,y_{t+1}) - B_t$, the consecutive iterates of Algorithm \ref{Algo: Accelerated_STSA} satisfy:
		\begin{align*}
		\mathbb{E} [ \ell(x_{t+1}) - \ell^\ast] & \leq \mathbb{E} \Big[ (1 - \alpha_t \mu_f) \big(\ell(x_t) - \ell^\ast \big) - \frac{\alpha_{t}}{2} (1 - \alpha_{t} L_f) \|h_t^f\|^2 + \alpha_{t} \|e_t^f\|^2 \\
		&  \qquad \qquad \qquad \qquad\qquad \qquad\qquad \qquad + 2 \alpha_{t} L^2 \|y_{t} -   y^\ast(x_t)\|^2 + 2 \alpha_{t} \|B_t \|^2 \Big],
		\end{align*}
		for all $t \in \{0,1,\ldots, T-1\}$, where the expectation is w.r.t. the stochasticity of the algorithm. 
	\end{lem}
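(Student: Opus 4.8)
The plan is to reproduce the non-convex descent computation of Lemma~\ref{Lem: Smoothness_l_Main} line for line and then replace the negative gradient-norm term by a contraction of the optimality gap using strong convexity. Concretely, I would start from the $L_f$-smoothness of $\ell$ (Lemma~\ref{Lem: Lip_Ghadhimi}), substitute the update $x_{t+1}=x_t-\alpha_t h_t^f$ from Algorithm~\ref{Algo: Accelerated_STSA}, and apply the polarization identity exactly as in \eqref{Eq: Smoothness_l_1st} to obtain
\[
\ell(x_{t+1})\le \ell(x_t)-\tfrac{\alpha_t}{2}\|\nabla\ell(x_t)\|^2-\tfrac{\alpha_t}{2}(1-\alpha_tL_f)\|h_t^f\|^2+\tfrac{\alpha_t}{2}\|h_t^f-\nabla\ell(x_t)\|^2.
\]
This step does not use convexity at all, so it carries over unchanged from the non-convex case.

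The only genuinely new ingredient is the strong convexity of $\ell$. An $L_f$-smooth, $\mu_f$-strongly-convex function obeys the Polyak--{\L}ojasiewicz inequality $\|\nabla\ell(x_t)\|^2\ge 2\mu_f(\ell(x_t)-\ell^\ast)$, so I would bound $-\tfrac{\alpha_t}{2}\|\nabla\ell(x_t)\|^2\le-\alpha_t\mu_f(\ell(x_t)-\ell^\ast)$ and then subtract $\ell^\ast$ from both sides; the leading two terms collapse into the contraction factor $(1-\alpha_t\mu_f)(\ell(x_t)-\ell^\ast)$. For the cross term I would reuse the decomposition from the non-convex proof: split $h_t^f-\nabla\ell(x_t)$ into the estimation error $e_t^f$, the approximation gap $\bar\nabla f-\nabla\ell$, and the bias $B_t$, apply the Young-type bound \eqref{Eq: Sum_Norms}, and invoke the first inequality of \eqref{eq:lip} to control the approximation gap by $L\|y-y^\ast(x_t)\|$. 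Taking expectations over the algorithm's stochasticity then yields the stated inequality.

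I do not expect a serious obstacle; this lemma is simply the strongly-convex analogue of Lemma~\ref{Lem: Smoothness_l_Main}, with the PL inequality as the single extra tool. The only place demanding care is the bookkeeping around the iterate at which $e_t^f$ is centered: here $e_t^f$ is defined through $\bar\nabla f(x_t,y_{t+1})$ rather than $\bar\nabla f(x_t,y_t)$, so I must ensure that the tracking term emitted by \eqref{eq:lip} is exactly the $\|y_t-y^\ast(x_t)\|^2$ term appearing on the right-hand side and that $B_t$ is defined at the matching iterate. Once these are aligned, the remaining manipulations are the same polarization, PL, and splitting steps already established earlier in the paper.
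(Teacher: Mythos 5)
Your proposal is correct and follows essentially the paper's own route: the paper's proof simply invokes the non-convex descent bound of Lemma~\ref{Lem: Smoothness_l_Main}, applies the strong-convexity (PL) inequality $\|\nabla \ell(x_t)\|^2 \geq 2\mu_f\big(\ell(x_t)-\ell^\ast\big)$ to the $-\frac{\alpha_t}{2}\|\nabla\ell(x_t)\|^2$ term, subtracts $\ell^\ast$, and rearranges---precisely your plan, whether you re-derive the smoothness/polarization step or just cite it. The bookkeeping issue you flag resolves in your favor: the $y_{t+1}$ in the lemma's definition of $e_t^f$ is a typo, since the paper's proof, the companion Lemma~\ref{Lem: SC_Descent_grad_error}, and the $\|y_t - y^\ast(x_t)\|^2$ term in the stated bound all rely on the $y_t$-centered definition $e_t^f = h_t^f - \bar{\nabla} f(x_t,y_t) - B_t$, exactly as in the non-convex case.
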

	\begin{proof}
		Note that from Lemma \ref{Lem: Smoothness_l_Main} derived in Appendix \ref{Sec: Appendix_Thm_NC}, we have
		\begin{align}
		\label{Eq: LemmaC1}
			\mathbb{E} [ \ell(x_{t+1}) ] & \leq \mathbb{E} \Big[ \ell(x_t) - \frac{\alpha_{t}}{2} \|\nabla \ell(x_t)\|^2 - \frac{\alpha_{t}}{2} (1 - \alpha_t L_f) \|h_t^f\|^2   + \alpha_{t} \|e_t^f\|^2 \\
			& \quad \qquad \qquad\qquad \qquad \qquad \qquad + 2 \alpha_{t} L^2 \|y_{t} -   y^\ast(x_t)\|^2 + 2 \alpha_{t} \|B_t \|^2 \Big]. \nonumber
		\end{align}
		Now using the fact that for a strongly convex function we have:
		\begin{align*}
		\|\nabla \ell(x)\|^2 \geq 2 \mu_f (\ell(x) - \ell^\ast)\quad \text{for all} \quad x \in \mathbb{R}^{\du}, 
		\end{align*}
		substituting in \eqref{Eq: LemmaC1}, subtracting $\ell^\ast$ from both sides and rearranging the terms yields the statement of the Lemma. 
	\end{proof}

		\subsection{Descent in the iterates of the lower level problem} 
{
\begin{lem}
	\label{Lem: InnerIterates_Descent_SC}
		The iterates of the inner problem generated according to Algorithm \ref{Algo: Accelerated_STSA}, satisfy
	\begin{align*}
	\mathbb{E}\|y_{t+1} - y^\ast(x_{t + 1})\|^2 & \leq (1 + \gamma_t)   \big(1 -  2 \beta_t \mu_g + \beta_t^2 L_g^2 \big)  \mathbb{E} \|y_t - y^\ast(x_{t})\|^2 \\
	& \qquad \qquad \qquad \qquad \qquad \quad + \bigg( 1 + \frac{1}{\gamma_t} \bigg) L_y^2 \alpha_{t}^2 \mathbb{E}   \| h_{t}^f \|^2  +
(1 + \gamma_t)  \beta_t^2   \sigma_g^2.
	\end{align*}
	for all $t \in \{0,\ldots, T - 1\}$ with some $\gamma_t > 0$, where the expectation is w.r.t. the stochasticity of the algorithm. 
\end{lem}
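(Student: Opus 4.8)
The plan is to exploit the fact that, under the step-size and momentum schedule of Theorem~\ref{Thm: Convergence_SC_Fixed}, we have $\eta_t^g \equiv 1$. Substituting this into the recursive estimator \eqref{Eq: HybridGradEstimate_Inner} collapses the momentum term, so that $h_t^g = \nabla_y g(x_t, y_t; \zeta_t)$ is simply an \emph{unbiased} stochastic gradient. This is the crucial difference from the non-convex case (Lemma~\ref{Lem: InnerIterates_Descent}): there is no longer any estimation error $e_t^g$ to track, and a textbook strongly-convex SGD contraction applies directly. Consequently I expect no genuine obstacle; the whole argument becomes routine once this collapse is observed.

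First I would decouple the change in $x$ from the inner descent through Young's inequality with a free parameter $\gamma_t > 0$:
\begin{align*}
\|y_{t+1} - y^\ast(x_{t+1})\|^2 \leq (1 + \gamma_t)\|y_{t+1} - y^\ast(x_t)\|^2 + \Big(1 + \tfrac{1}{\gamma_t}\Big)\|y^\ast(x_t) - y^\ast(x_{t+1})\|^2.
\end{align*}
The second summand is handled by the Lipschitz continuity of $y^\ast(\cdot)$ from Lemma~\ref{Lem: Lip_Ghadhimi} together with the $x$-update $x_{t+1} = x_t - \alpha_t h_t^f$ (Step~7 of Algorithm~\ref{Algo: Accelerated_STSA}), which gives $\|y^\ast(x_t) - y^\ast(x_{t+1})\|^2 \leq L_y^2 \alpha_t^2 \|h_t^f\|^2$ and hence the $(1 + 1/\gamma_t) L_y^2 \alpha_t^2 \mathbb{E}\|h_t^f\|^2$ contribution.

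For the first summand I would invoke the $y$-update $y_{t+1} = y_t - \beta_t \nabla_y g(x_t, y_t; \zeta_t)$ (Step~5), expand the square, and take expectation conditioned on $\mathcal{F}_t$. The cross term reduces to $-2\beta_t \langle \nabla_y g(x_t, y_t), y_t - y^\ast(x_t)\rangle$ by the unbiasedness of $\zeta_t$; invoking $\mu_g$-strong convexity of $g(x_t, \cdot)$ (Assumption~\ref{Assump: InnerFunction}(iii)) together with the stationarity identity $\nabla_y g(x_t, y^\ast(x_t)) = 0$ bounds it above by $-2\beta_t \mu_g \|y_t - y^\ast(x_t)\|^2$. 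For the quadratic term I would split $\mathbb{E}\big[\|\nabla_y g(x_t, y_t; \zeta_t)\|^2 \mid \mathcal{F}_t\big] = \|\nabla_y g(x_t, y_t)\|^2 + \mathbb{E}\big[\|\nabla_y g(x_t, y_t; \zeta_t) - \nabla_y g(x_t, y_t)\|^2 \mid \mathcal{F}_t\big]$, bound the mean piece by $L_g^2 \|y_t - y^\ast(x_t)\|^2$ via the Lipschitz gradient (Assumption~\ref{Assump: InnerFunction}(ii), again using $\nabla_y g(x_t, y^\ast(x_t)) = 0$), and the centered piece by $\sigma_g^2$ via Assumption~\ref{Assump: Stochastic Grad}(ii). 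Collecting these yields
\begin{align*}
\mathbb{E}\|y_{t+1} - y^\ast(x_t)\|^2 \leq \big(1 - 2\beta_t \mu_g + \beta_t^2 L_g^2\big)\,\mathbb{E}\|y_t - y^\ast(x_t)\|^2 + \beta_t^2 \sigma_g^2.
\end{align*}
Substituting these two bounds back into the Young decomposition produces the claimed inequality. The only point requiring care is the bookkeeping of the conditional expectation, so that the mean of $\nabla_y g(x_t, y_t; \zeta_t)$ enters the contraction factor through strong convexity and the Lipschitz gradient, while its variance is cleanly isolated into the additive $\beta_t^2 \sigma_g^2$ term.
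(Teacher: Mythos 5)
Your proposal is correct and follows essentially the same route as the paper's proof: Young's inequality with parameter $\gamma_t$ to decouple the $x$-motion, Lipschitzness of $y^\ast(\cdot)$ with the $x$-update for the $(1+1/\gamma_t)L_y^2\alpha_t^2\mathbb{E}\|h_t^f\|^2$ term, and then — using that $\eta_t^g\equiv 1$ makes $h_t^g=\nabla_y g(x_t,y_t;\zeta_t)$ unbiased — the standard expansion with strong-convexity monotonicity for the cross term, the Lipschitz gradient plus $\nabla_y g(x_t,y^\ast(x_t))=0$ for the mean-squared-gradient term, and Assumption~\ref{Assump: Stochastic Grad}(ii) for the variance. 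Your bookkeeping of the cross term with the factor $-2\beta_t$ is in fact cleaner than the paper's intermediate display (which drops the factor of $2$ in a typo, though its final contraction factor $1-2\beta_t\mu_g+\beta_t^2L_g^2$ is consistent with yours).
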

\begin{proof}
    Consider the term $\mathbb{E} \| y_{t + 1} - y^\ast(x_{t+1})\|^2$, we have
    \begin{align}
     \mathbb{E} \| y_{t + 1} - y^\ast(x_{t+1})\|^2 & \overset{(a)}{\leq} (1 + \gamma_t) \mathbb{E} \| y_{t + 1} - y^\ast(x_{t})\|^2  + \bigg( 1 + \frac{1}{\gamma_t} \bigg) \mathbb{E} \|y^\ast(x_{t+1}) - y^\ast(x_{t}) \|^2 \nonumber\\
     & \overset{(b)}{\leq} (1 + \gamma_t) \mathbb{E} \| y_{t} - \beta_t h_t^g - y^\ast(x_{t})\|^2  + \bigg( 1 + \frac{1}{\gamma_t} \bigg)  L_y^2 \mathbb{E}\|x_{t+1} - x_{t} \|^2 \nonumber\\
     & \overset{(c)}{\leq} (1 + \gamma_t) \mathbb{E} \| y_{t} - \beta_t h_t^g - y^\ast(x_{t})\|^2  + \bigg( 1 + \frac{1}{\gamma_t} \bigg)  L_y^2 \alpha_t^2 \mathbb{E} \|h_t^f \|^2 
     \label{Eq: SC_InnerIterate_1st}
    \end{align}
    where $(a)$ results from Young's inequality; $(b)$ uses Step 5 of Algorithm \ref{Algo: Accelerated_STSA} and Lipschitzness of $y^\ast(\cdot)$ given in Lemma \ref{Lem: Lip_Ghadhimi}; and $(c)$ uses Step 7 of Algorithm \ref{Algo: Accelerated_STSA}.
    
    Next, we consider the first term of \eqref{Eq: SC_InnerIterate_1st} above:
    \begin{align}
    \mathbb{E} \| y_{t} - \beta_t h_t^g - y^\ast(x_{t})\|^2 & =  \mathbb{E} \| y_{t}   - y^\ast(x_{t})\|^2 + \beta_t^2 \mathbb{E} \|   h_t^g \|^2 - \beta_t \mathbb{E} \langle y_t - y^\ast (x_t) ,  h_t^g \rangle \nonumber \\
  & \overset{(a)}{\leq}  \mathbb{E} \| y_{t}   - y^\ast(x_{t})\|^2 + \beta_t^2 \mathbb{E} \|  \nabla_y g(x_t, y_t) \|^2 + \beta_t^2 \mathbb{E} \|   h_t^g - \nabla_y g(x_t, y_t)\|^2  \nonumber\\
  & \qquad \qquad \qquad \qquad \qquad \qquad \qquad \qquad   - \beta_t \mathbb{E} \langle y_t - y^\ast (x_t) , \nabla_y g(x_t, y_t)\rangle  \nonumber\\
    & \overset{(b)}{\leq} (1- 2 \mu_g \beta_t + \beta_t^2 L_g^2 ) \mathbb{E} \| y_{t}   - y^\ast(x_{t})\|^2 +   + \beta_t^2 \sigma_g^2  
          \label{Eq: SC_InnerIterate_2nd}
    \end{align}
    where $(a)$ utilizes the fact that for $\eta_t^g = 1$ we have $\mathbb{E}[h_t^g | \mathcal{F}_t ] = \nabla_y g(x_t,y_t)$ and $(b)$ uses the fact that (1) $\nabla_y g(x , y^\ast(x)) = 0$ and the Lipschitzness of $\nabla_y g(x, \cdot)$ in Assumption \ref{Assump: InnerFunction}-(ii); (2) Assumption \ref{Assump: Stochastic Grad}-(ii); and (3) $g(x,y)$ is $\mu_g$-strongly convex w.r.t. $y$, we therefore have
 	$$\big\langle \nabla g_y(x , y_1) - \nabla g_y(x, y_2), y_1 - y_2 \big\rangle \geq \mu_g \|y_1 - y_2 \|^2,$$
    using $y_1 = y_t$ and $y_2 = y^\ast(x_t)$ yields inequality $(b)$. Finally, substituting \eqref{Eq: SC_InnerIterate_2nd} in \eqref{Eq: SC_InnerIterate_1st} yields the statement of the lemma. 
\end{proof}	

\subsection{Descent in the gradient estimation error}
\begin{lem}
		\label{Lem: SC_Descent_grad_error}
		Define $e_t^f \coloneqq h_t^f - \bar{\nabla}f(x_t , y_t) - B_t$. Then the consecutive iterates of Algorithm \ref{Algo: Accelerated_STSA} satisfy:
		\begin{align*}
		\mathbb{E} \|e_{t+1}^f\|^2 &  \leq (1 - \eta_{t+1}^f)^2 \mathbb{E} \| e_t^f \|^2 +  2 (\eta_{t+1}^f )^2 \sigma_f^2  +  4 (1 - \eta_{t+1}^f)^2 L_K^2 \alpha_t^2 \mathbb{E}\|h_t^f\|^2 \\
		&  \qquad \qquad \qquad   + 8 (1 - \eta_{t+1}^f)^2 L_K^2 \beta_t^2 \sigma_g^2 + 8 (1 - \eta_{t+1}^f)^2 L_K^2 L_g^2 \beta_t^2 \mathbb{E}\| y_t - y^\ast(x_t)\|^2,
		\end{align*}
		for all $t \in \{0,\ldots, T - 1\}$, with $L_K$ defined in the statement of Lemma \ref{Lem: Lip_GradEst_Appendix}. Here the expectation is taken w.r.t the stochasticity of the algorithm. 	
	\end{lem}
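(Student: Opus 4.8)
The plan is to mirror the proof of Lemma~\ref{Lem: Descent_grad_error} almost verbatim, and to exploit only one simplification specific to the strongly-convex regime: the choice $\eta_t^g \equiv 1$, which collapses the lower-level momentum estimator into a plain stochastic gradient.

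First I would expand $\mathbb{E}\|e_{t+1}^f\|^2$ using the recursion \eqref{Eq: HybridGradEstimate_Outer} together with the definition $e_{t+1}^f = h_{t+1}^f - \bar{\nabla} f(x_{t+1},y_{t+1}) - B_{t+1}$. Exactly as in steps $(a)$--$(c)$ of Lemma~\ref{Lem: Descent_grad_error}, this splits into $(1-\eta_{t+1}^f)^2\,\mathbb{E}\|e_t^f\|^2$ plus a fresh-noise term: conditioned on $\mathcal{F}_{t+1}$, the increment $\bar{\nabla} f(x_{t+1},y_{t+1};\bar{\xi}_{t+1}) - \bar{\nabla} f(x_{t+1},y_{t+1}) - B_{t+1} - \big(\bar{\nabla} f(x_t,y_t;\bar{\xi}_{t+1}) - \bar{\nabla} f(x_t,y_t) - B_t\big)$ is zero-mean by Assumption~\ref{Assump: Stochastic Grad}-(i), so the cross term with $e_t^f$ vanishes. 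The $\eta_{t+1}^f$-weighted term contributes $2(\eta_{t+1}^f)^2\sigma_f^2$ after invoking Assumption~\ref{Assump: Stochastic Grad}-(i), and the mean-variance inequality followed by the Lipschitz bound of Lemma~\ref{Lem: Lip_GradEst_Appendix} (steps $(d)$--$(c)$ in the earlier proof) controls the consecutive-iterate difference by $4(1-\eta_{t+1}^f)^2 L_K^2\alpha_t^2\,\mathbb{E}\|h_t^f\|^2 + 4(1-\eta_{t+1}^f)^2 L_K^2\beta_t^2\,\mathbb{E}\|h_t^g\|^2$, where Steps~5 and~7 of Algorithm~\ref{Algo: Accelerated_STSA} supply $x_{t+1}-x_t = -\alpha_t h_t^f$ and $y_{t+1}-y_t = -\beta_t h_t^g$.

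The sole departure from the non-convex analysis is the treatment of $\mathbb{E}\|h_t^g\|^2$. In Lemma~\ref{Lem: Descent_grad_error} the momentum estimator is split into $e_t^g$ and $\nabla_y g(x_t,y_t)$; here, since $\eta_t^g \equiv 1$, the recursion \eqref{Eq: HybridGradEstimate_Inner} reduces to $h_t^g = \nabla_y g(x_t,y_t;\zeta_t)$, a plain unbiased stochastic gradient. I would therefore bound $\mathbb{E}\|h_t^g\|^2 \leq 2\,\mathbb{E}\|\nabla_y g(x_t,y_t;\zeta_t) - \nabla_y g(x_t,y_t)\|^2 + 2\,\mathbb{E}\|\nabla_y g(x_t,y_t)\|^2 \leq 2\sigma_g^2 + 2 L_g^2\,\mathbb{E}\|y_t - y^\ast(x_t)\|^2$, where the first inequality is \eqref{Eq: Sum_Norms}, the variance term is controlled by Assumption~\ref{Assump: Stochastic Grad}-(ii), and the last step uses the optimality condition $\nabla_y g(x_t,y^\ast(x_t)) = 0$ together with the Lipschitz continuity of $\nabla_y g(x_t,\cdot)$ from Assumption~\ref{Assump: InnerFunction}-(ii). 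Substituting this into the $4(1-\eta_{t+1}^f)^2 L_K^2\beta_t^2\,\mathbb{E}\|h_t^g\|^2$ term produces precisely the two terms $8(1-\eta_{t+1}^f)^2 L_K^2\beta_t^2\sigma_g^2$ and $8(1-\eta_{t+1}^f)^2 L_K^2 L_g^2\beta_t^2\,\mathbb{E}\|y_t - y^\ast(x_t)\|^2$, which completes the claimed bound.

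I do not expect a genuine obstacle, since the argument is a near-verbatim specialization of Lemma~\ref{Lem: Descent_grad_error}; the two points requiring care are bookkeeping ones. First, the zero-mean cancellation in the cross term must use the \emph{recentered} quantities $\bar{\nabla} f(\cdot;\bar{\xi}) - \bar{\nabla} f(\cdot) - B(\cdot)$ rather than the raw stochastic gradients, because only the former are conditionally zero-mean under the biased estimator (Assumption~\ref{Assump: Stochastic Grad}-(i)). Second, one must keep the factor-of-two constants consistent through the repeated applications of \eqref{Eq: Sum_Norms}, so that the final coefficients emerge as exactly $8L_K^2$ on both the $\sigma_g^2$ and the $L_g^2\|y_t - y^\ast(x_t)\|^2$ terms.
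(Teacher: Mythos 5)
Your proposal is correct and is essentially the paper's own argument: the paper simply invokes the conclusion of Lemma~\ref{Lem: Descent_grad_error} as a black box and then substitutes $\mathbb{E}\|e_t^g\|^2 \leq \sigma_g^2$ (valid since $\eta_t^g \equiv 1$ makes $h_t^g$ a plain stochastic gradient, by Assumption~\ref{Assump: Stochastic Grad}-(ii)) and $\mathbb{E}\|\nabla_y g(x_t,y_t)\|^2 \leq L_g^2\,\mathbb{E}\|y_t - y^\ast(x_t)\|^2$ (via $\nabla_y g(x_t,y^\ast(x_t)) = 0$ and Assumption~\ref{Assump: InnerFunction}-(ii)). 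Your inline bound $\mathbb{E}\|h_t^g\|^2 \leq 2\sigma_g^2 + 2L_g^2\,\mathbb{E}\|y_t - y^\ast(x_t)\|^2$ is exactly the same split with identical constants, so the two proofs coincide up to whether the non-convex lemma is cited or re-derived.
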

\begin{proof}
From the statement of Lemma \ref{Lem: Descent_grad_error}, we have
 \begin{align*}
		\mathbb{E} \|e_{t+1}^f\|^2 &  \leq (1 - \eta_{t+1}^f)^2 \mathbb{E} \| e_t^f \|^2 +  2 (\eta_{t+1}^f )^2 \sigma_f^2  +  4 (1 - \eta_{t+1}^f)^2 L_K^2 \alpha_t^2 \mathbb{E}\|h_t^f\|^2 \\
		&  \qquad \qquad \qquad   + 8 (1 - \eta_{t+1}^f)^2 L_K^2 \beta_t^2 \mathbb{E}\|e_t^g\|^2 + 8 (1 - \eta_{t+1}^f)^2 L_K^2 \beta_t^2 \mathbb{E}\|\nabla_y g(x_t, y_t)\|^2,
		\end{align*}
		The proof follows by noticing the fact that for the gradient estimate $h_t^g$ with $\eta_t^g = 1$, we have $\mathbb{E}\|e_t^g\|^2 \leq \sigma_g^2$ from Assumption \ref{Assump: Stochastic Grad}-(ii) and the Lipschitzness of $\nabla_y g(x, \cdot)$ combined with the fact that $\nabla_y g(x, y^\ast(x)) = 0$.
\end{proof}

	\subsection{Descent in potential function}
	In this section, we define the potential function as:
	\begin{align}
	\label{Eq: PotentialFunction_SC}
	\widehat{V}_{t} \coloneqq (\ell(x_t) - \ell^\ast ) +  \|e_t^f\|^2 +  \| y_{t} - y^\ast(x_t)\|^2,
	\end{align}
	which is different from that of \eqref{Eq: PotentialFunction}. 
	We next show that the potential function decreases with appropriate choice of parameters.  
	\begin{lem}
		\label{Lem: SC-SC_PotentialFunction_Descent}
		With the potential function, $\widehat{V}_t$, defined in \eqref{Eq: PotentialFunction_SC}, with the choice of parameters
		\begin{align*}
		\eta_{t + 1}^f = (\mu_f + 1) \alpha_{t}, ~ \beta_{t} = \hat{c}_\beta \alpha_{t} ~\text{with} ~	\hat{c}_\beta =   \frac{8 L_y^2  + 8 L^2 + 2 \mu_f}{\mu_g}  ~\text{and} ~ \gamma_{t} = \frac{ \mu_g \beta_{t}}{2 (1 -  \mu_g \beta_{t})}~ \text{for all}~t \in \{0,1, \ldots, T-1\},
		\end{align*}
		with $\alpha_{-1} = \alpha_{0}$, moreover, we choose
		\begin{align}\label{eq:alpha_cvx_app}
&		\alpha_t \leq \bigg\{  \frac{1}{\mu_f + 1}, \frac{1}{2 \mu_g \hat{c}_\beta} ,\frac{\mu_g}{ \hat{c}_\beta L_g^2 }, \frac{1}{8L_K^2 + L_f},  \frac{L^2 + 2 L_y^2}{4 L_K^2 L_g^2 \hat{c}_\beta^2} \bigg\}.
		\end{align}
		Further, we choose 
		$$K = \frac{L_g}{2\mu_g} \log \bigg( \bigg(\frac{ C_{g_{xy}} C_{f_y} }{ \mu_g} \bigg)^2 T \bigg)$$ 
		such that we have $\|B_t\|^2 \leq 1/T$, then we have
		\begin{align*}
	\mathbb{E}[\widehat{V}_{t + 1}] \leq (1 - \mu_f \alpha_{t+1}) \mathbb{E} [\widehat{V}_t] + \frac{2 \alpha_{t}}{T} + \big[ (2 \hat{c}_\beta^2 +  8 \hat{c}_\beta^2 L_K^2)  \sigma_g^2 + 2 (\mu_f+1)^2  \sigma_f^2 \big]\alpha_t^2,
		\end{align*}
		for all $t \in \{0,1, \ldots, T-1\}$.
	\end{lem}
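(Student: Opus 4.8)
The plan is to obtain the one-step contraction by summing the three descent estimates already established: Lemma~\ref{Lem: Smoothness_l_main_SC} (function value), Lemma~\ref{Lem: InnerIterates_Descent_SC} (inner tracking error), and Lemma~\ref{Lem: SC_Descent_grad_error} (outer gradient-estimation error). The choice of potential is what makes this clean: since $\widehat{V}_t = (\ell(x_t)-\ell^\ast) + \|e_t^f\|^2 + \|y_t - y^\ast(x_t)\|^2$ is exactly the \emph{unweighted} sum of the three quantities whose descent these lemmas control, I would simply add the three right-hand sides and then regroup the result according to the four ``state'' terms $\mathbb{E}[\ell(x_t)-\ell^\ast]$, $\mathbb{E}\|e_t^f\|^2$, $\mathbb{E}\|y_t - y^\ast(x_t)\|^2$, and $\mathbb{E}\|h_t^f\|^2$, together with the three residual contributions proportional to $\|B_t\|^2$, $\sigma_f^2$, and $\sigma_g^2$. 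The goal is then to show that each of the first three coefficients is at most $1-\mu_f\alpha_t$, that the $\mathbb{E}\|h_t^f\|^2$ coefficient is nonpositive (so that term can be discarded), and that the residual sums to the stated tail.

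Two of the coefficient checks are straightforward. The coefficient of $\mathbb{E}[\ell(x_t)-\ell^\ast]$ is already $1-\mu_f\alpha_t$ from Lemma~\ref{Lem: Smoothness_l_main_SC}. The coefficient of $\mathbb{E}\|e_t^f\|^2$ is $(1-\eta_{t+1}^f)^2 + \alpha_t$; using $\eta_{t+1}^f = (\mu_f+1)\alpha_t \le 1$ (from $\alpha_t \le 1/(\mu_f+1)$) together with $(1-\eta)^2 \le 1-\eta$, this is at most $1 - (\mu_f+1)\alpha_t + \alpha_t = 1 - \mu_f\alpha_t$. For the $\mathbb{E}\|h_t^f\|^2$ coefficient I would combine the negative term $-\tfrac{\alpha_t}{2}(1-\alpha_t L_f)$ with the contributions $(1+1/\gamma_t)L_y^2\alpha_t^2$ and $4(1-\eta_{t+1}^f)^2 L_K^2\alpha_t^2$; substituting $1+1/\gamma_t = (2-\mu_g\beta_t)/(\mu_g\beta_t) \le 2/(\mu_g\hat{c}_\beta\alpha_t)$ and using $\mu_g\hat{c}_\beta \ge 8L_y^2$ reduces the middle term to at most $\tfrac{1}{4}\alpha_t$, and the step-size bounds in \eqref{eq:alpha_cvx_app} are precisely calibrated to push the resulting coefficient below zero.

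The delicate step, and the main obstacle, is the coefficient of $\mathbb{E}\|y_t - y^\ast(x_t)\|^2$, namely $2\alpha_t L^2 + (1+\gamma_t)(1 - 2\mu_g\beta_t + \beta_t^2 L_g^2) + 8(1-\eta_{t+1}^f)^2 L_K^2 L_g^2\beta_t^2$. Here the bookkeeping must be done carefully. First, $\alpha_t \le \mu_g/(\hat{c}_\beta L_g^2)$ yields $\beta_t^2 L_g^2 \le \mu_g\beta_t$, hence $1 - 2\mu_g\beta_t + \beta_t^2 L_g^2 \le 1 - \mu_g\beta_t$. The explicit choice $\gamma_t = \mu_g\beta_t/(2(1-\mu_g\beta_t))$ then gives $1+\gamma_t = (1-\mu_g\beta_t/2)/(1-\mu_g\beta_t)$, so that $(1+\gamma_t)(1-\mu_g\beta_t) = 1 - \mu_g\beta_t/2$. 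Substituting $\beta_t = \hat{c}_\beta\alpha_t$ with $\hat{c}_\beta = (8L_y^2 + 8L^2 + 2\mu_f)/\mu_g$ collapses this to $1 - (4L_y^2 + 4L^2 + \mu_f)\alpha_t$, and adding the penalty $2\alpha_t L^2$ from the function-value descent leaves $1 - (4L_y^2 + 2L^2 + \mu_f)\alpha_t$. The last contribution $8(1-\eta^f)^2 L_K^2 L_g^2\beta_t^2 \le 8L_K^2 L_g^2\hat{c}_\beta^2\alpha_t^2$ is then exactly absorbed by the remaining slack $(4L_y^2 + 2L^2)\alpha_t$ under $\alpha_t \le (L^2 + 2L_y^2)/(4L_K^2 L_g^2\hat{c}_\beta^2)$, driving the coefficient to at most $1-\mu_f\alpha_t$. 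The specific value of $\hat{c}_\beta$ is what makes the contraction survive the $2\alpha_t L^2$ term: it must be large enough to provide strong inner contraction, yet this in turn inflates the $\sigma_g^2$ noise, so the constant is chosen to balance both.

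With all three state coefficients bounded by $1-\mu_f\alpha_t$ and the $\mathbb{E}\|h_t^f\|^2$ term dropped, the quantities recombine into $(1-\mu_f\alpha_t)\,\mathbb{E}[\widehat{V}_t]$. For the residual I would collect $2\alpha_t\|B_t\|^2 \le 2\alpha_t/T$, where the choice $K = (L_g/2\mu_g)\log((C_{g_{xy}}C_{f_y}/\mu_g)^2 T)$ forces $\|B_t\|^2 \le 1/T$ via Lemma~\ref{Lem: Bias}; the outer-variance term $2(\eta_{t+1}^f)^2\sigma_f^2 = 2(\mu_f+1)^2\alpha_t^2\sigma_f^2$; and the inner-variance terms $(1+\gamma_t)\beta_t^2\sigma_g^2 + 8(1-\eta^f)^2 L_K^2\beta_t^2\sigma_g^2 \le (2\hat{c}_\beta^2 + 8\hat{c}_\beta^2 L_K^2)\alpha_t^2\sigma_g^2$, using $1+\gamma_t \le 2$ (which holds because $\alpha_t \le 1/(2\mu_g\hat{c}_\beta)$ gives $\mu_g\beta_t \le 1/2$). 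Since the step size is constant in this regime, $\alpha_{t+1} = \alpha_t$, and the assembled inequality is exactly the claimed bound.
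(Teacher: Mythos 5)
Your proposal is correct and follows essentially the same route as the paper's own proof: it sums Lemmas \ref{Lem: Smoothness_l_main_SC}, \ref{Lem: InnerIterates_Descent_SC}, and \ref{Lem: SC_Descent_grad_error}, uses the identical choices of $\gamma_t$ and $\hat{c}_\beta$ to collapse the inner-tracking contraction to $1-(4L_y^2+4L^2+\mu_f)\alpha_t$, absorbs the $8L_K^2L_g^2\hat{c}_\beta^2\alpha_t^2$ and $2\alpha_t L^2$ penalties into the slack via \eqref{eq:alpha_cvx_app}, discards the nonpositive $\mathbb{E}\|h_t^f\|^2$ coefficient, and collects the same $\|B_t\|^2$, $\sigma_f^2$, $\sigma_g^2$ residuals. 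The only blemish, a factor-of-two looseness in verifying that $\alpha_t \le 1/(8L_K^2+L_f)$ makes the $\mathbb{E}\|h_t^f\|^2$ coefficient nonpositive, is present in the paper's own argument as well, so your write-up is faithful to (and in places more explicit than) the original.
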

	\begin{proof}
		From Lemma \ref{Lem: SC_Descent_grad_error}, we have
		\begin{align}
			\mathbb{E} \|e_{t+1}^f\|^2 &  \leq (1 - \eta_{t+1}^f) \mathbb{E} \| e_t^f \|^2 +  2 (\eta_{t+1}^f )^2 \sigma_f^2  +  4   L_K^2 \alpha_t^2 \mathbb{E}\|h_t^f\|^2   + 8   L_K^2 \beta_t^2 \sigma_g^2 + 8   L_K^2 L_g^2 \beta_t^2 \mathbb{E}\| y_t - y^\ast(x_t)\|^2,
		\label{Eq: Descent_grad_error_1st_SC}
		\end{align}
		which follows from $1 - \eta_{t+1}^f \leq 1$. 
With the choice of $\eta_{t} = (\mu_f + 1) \alpha_t$ and $\beta_t = \hat{c}_\beta \alpha_t$ we get from \eqref{Eq: Descent_grad_error_1st_SC}:
		\begin{align}
		\mathbb{E} \|e_{t+1}^f \|^2 &  \leq (1 - (\mu_f + 1) \alpha_{t}) \mathbb{E} \| e_t^f \|^2 +   2 (\mu_f + 1 )^2 \alpha_t^2 \sigma_f^2  +  4   L_K^2 \alpha_t^2 \mathbb{E}\|h_t^f\|^2 \nonumber \\
		&  \qquad \qquad \qquad \qquad\qquad \qquad \qquad + 8   L_K^2 \hat{c}_\beta^2 \alpha_t^2 \sigma_g^2 + 8   L_K^2 L_g^2 \hat{c}_\beta^2 \alpha_t^2 \mathbb{E}\| y_t - y^\ast(x_t)\|^2,
		\label{Eq: Descent_grad_error_SC}
		\end{align}
Next, we consider the descent in the iterates of inner problem. Again using Lemma \ref{Lem: InnerIterates_Descent_SC} we have
\begin{align}
\mathbb{E}\|y_{t+1} - y^\ast(x_{t + 1})\|^2 & \leq (1 + \gamma_t)   \big(1 -  2 \beta_t \mu_g + \beta_t^2 L_g^2 \big)  \mathbb{E} \|y_t - y^\ast(x_{t})\|^2 \label{Eq: InnerIterates_Descent_1st_SC} \\
& \quad + \bigg( 1 + \frac{1}{\gamma_t} \bigg) L_y^2 \alpha_{t}^2 \mathbb{E}   \| h_{t}^f \|^2  +
(1 + \gamma_t)  \beta_t^2   \sigma_g^2. \nonumber
		\end{align}
	Using the fact that $\beta_t \leq \frac{\mu_g}{L_g^2}$, $\beta_t \leq \frac{1}{2 \mu_g}$ and from the choice of $\gamma_t$ we have
		$1 + \frac{1}{\gamma_{t}} \leq \frac{2}{\mu_g \beta_t}$
		Substituting the $\gamma_{t}$, $\beta_{t}$ and the upper bound on $1 + \frac{1}{\gamma_{t}}$ in \eqref{Eq: InnerIterates_Descent_1st_SC} above we get:
		\begin{align}
		\mathbb{E}\|y_{t+1} - y^\ast(x_{t+1})\|^2 \leq  \bigg(1 -  \frac{\hat{c}_\beta \mu_g \alpha_t}{2} \bigg)  \mathbb{E} \|y_{t} - y^\ast(x_t)\|^2 + \frac{2 L_y^2 \alpha_{t}}{ \mu_g \hat{c}_\beta} \mathbb{E}   \| h_t^f \|^2 + 2 \hat{c}_\beta^2 \alpha_{t}^2 \sigma_g^2.
		\label{Eq: InnerIterates_Descent_SC}
		\end{align}
		Next, replacing the choice of $\hat{c}_\beta$ in \eqref{Eq: InnerIterates_Descent_SC}, we get:
		\begin{align}
		    	\mathbb{E}\|y_{t+1} - y^\ast(x_{t+1})\|^2 \leq  \big(1 -  [ 4L_y^2 + 4 L^2 + \mu_f] \alpha_t \big)  \mathbb{E} \|y_{t} - y^\ast(x_t)\|^2 + \frac{ \alpha_{t}}{4} \mathbb{E}   \| h_t^f \|^2 + 2 \hat{c}_\beta^2 \alpha_{t}^2 \sigma_g^2.
		    \label{Eq: InnerIterates_Descent_SC_Main}
		\end{align}
Finally, to construct the potential function defined in \eqref{Eq: PotentialFunction_SC} we add \eqref{Eq: Descent_grad_error_SC} and \eqref{Eq: InnerIterates_Descent_SC_Main} to the expression of Lemma \ref{Lem: Smoothness_l_main_SC}, we get
		\begin{align*}
		\mathbb{E} [\widehat{V}_{t + 1}] & \leq (1 - \mu_f \alpha_t) \mathbb{E} [\widehat{V}_{t + 1}]  - \bigg( \frac{\alpha}{2} (1 - \alpha_t L_f)  - \frac{\alpha_t}{4} -  4 L_K^2 \alpha_t^2 \bigg)  \mathbb{E}\|h_t^f \|^2 + 2 \alpha_t \| B_t\|^2\\
	& \qquad -  \big( 4L^2 \alpha_t + 4L_y^2 \alpha_t - 2 L^2 \alpha_t - 8 L_K^2 L_g^2 \hat{c}_\beta^2 \alpha_t^2 \big) \mathbb{E}\|y_t - y^\ast(x_t) \|^2 \\
	& \qquad + (2 \hat{c}_\beta^2 + 8 \hat{c}_\beta^2 L_K^2) \alpha_t^2 \sigma_g^2 + 2 (\mu_f+1)^2 \alpha_t^2 \sigma_f^2.
		\end{align*}
		Noting the fact that $\alpha_t \leq \frac{1}{8L_K^2 + L_f}$ and $\alpha_t \leq \frac{L^2 + 2 L_y^2}{4 L_K^2 L_g^2 \hat{c}_\beta^2}$ and choosing $B_t$ such that we have $\|B_t\|^2 \leq \frac{1}{T}$, we get
		\begin{align*}
		\mathbb{E}[\widehat{V}_{t + 1}] \leq (1 - \mu_f \alpha_{t}) \mathbb{E} [\widehat{V}_t] + \frac{2 \alpha_{t}}{T} + \big[ (2 \hat{c}_\beta^2 +  8 \hat{c}_\beta^2 L_K^2)  \sigma_g^2 + 2 (\mu_f+1)^2  \sigma_f^2 \big]\alpha_t^2.
		\end{align*}
		This concludes the proof of the lemma.
	\end{proof}
	
	\subsection{Proof of Theorem \ref{Thm: Convergence_SC_Fixed}} 
	Next, we conclude the proof for the case of strongly-convex outer objective function case based on fixed step sizes and momentum parameters.
	\begin{proof}
		With fixed step sizes, i.e. $\alpha_{t} = \alpha$ for all $t \in \{0,1, \ldots, T-1\}$, we have from the Lemma \ref{Lem: SC-SC_PotentialFunction_Descent}
		\begin{align*}
		\mathbb{E}[\widehat{V}_{t + 1}] \leq (1 - \mu_f \alpha) \mathbb{E} [\widehat{V}_t] + \frac{2 \alpha}{T} + \big[ (2 \hat{c}_\beta^2 +  8 \hat{c}_\beta^2 L_K^2)  \sigma_g^2 + 2 (\mu_f+1)^2  \sigma_f^2 \big]\alpha^2.
		\end{align*}
		applying the above inequality recursively we get
		\begin{align}
		\mathbb{E}[\widehat{V}_{t}] & \leq (1 - \mu_f \alpha)^t \mathbb{E} [\widehat{V}_0] + \frac{2 \alpha}{T} \sum_{k = 0}^{t-1}(1 - \mu_f \alpha)^k  + \big[ (2 \hat{c}_\beta^2 +  8 \hat{c}_\beta^2 L_K^2)  \sigma_g^2 + 2 (\mu_f+1)^2  \sigma_f^2 \big] \alpha^2 \sum_{k = 0}^{t-1}(1 - \mu_f \alpha)^k  \nonumber\\
		& \overset{(a)}{\leq} (1 - \mu_f \alpha)^t \big((\ell(x_0) - \ell^\ast ) +  \mathbb{E} \|e_0^f\|^2 +  \mathbb{E} \| y_{0} - y^\ast(x_0)\|^2\big) + \frac{2 \alpha}{T} \sum_{k = 0}^{t-1}(1 - \mu_f \alpha)^k \nonumber\\
		& \qquad \qquad \qquad \qquad \qquad \qquad \qquad \qquad \qquad + \big[ (2 \hat{c}_\beta^2 +  8 \hat{c}_\beta^2 L_K^2)  \sigma_g^2 + 2 (\mu_f+1)^2  \sigma_f^2 \big] \alpha^2 \sum_{k = 0}^{t-1}(1 - \mu_f \alpha)^k \nonumber\\
		& \overset{(b)}{\leq} (1 - \mu_f \alpha)^t \big\{ (\ell(x_0) - \ell^\ast) + \sigma_f^2 + \|y_0 - y^\ast(x_0) \|^2   \big\} +  \frac{2}{\mu_f T} + \frac{(2 \hat{c}_\beta^2 +  8 \hat{c}_\beta^2 L_K^2)  \sigma_g^2 + 2 (\mu_f+1)^2  \sigma_f^2 }{\mu_f} \, \alpha,
		\label{Eq: SC_DescentPot_FixedStep}
		\end{align}
		where $(a)$ follows from the definition of $\widehat{V}_t$ given in \eqref{Eq: PotentialFunction_SC} and $(b)$ utilizes the summation of a geometric progression.
		
		This concludes the proof of the theorem.
	\end{proof}
	
	\paragraph{Sample complexity of \aname~in the strongly convex setting} 
	Let us estimate the total number of iterations, $T$, needed to reach an $\epsilon$-optimal solution. 
	First, we select a constant step size such that 
	\begin{equation}
	\label{Eq: alpha_epsilon}
	\alpha \leq \frac{\mu_f}{4  \big[ (2 \hat{c}_\beta^2 +  8 \hat{c}_\beta^2 L_K^2)  \sigma_g^2 + 2 (\mu_f+1)^2  \sigma_f^2 \big] } \epsilon \quad 
	\Longrightarrow \quad 
	\frac{\big[ (2 \hat{c}_\beta^2 +  8 \hat{c}_\beta^2 L_K^2)  \sigma_g^2 + 2 (\mu_f+1)^2  \sigma_f^2 \big] }{\mu_f} \alpha \leq \frac{\epsilon}{4} ,
	\end{equation}
	which controls the last term in \eqref{Eq: SC_DescentPot_FixedStep}.
	Secondly, to control the second term in \eqref{Eq: SC_DescentPot_FixedStep}, we observe that $T \geq \frac{8}{\mu_f \epsilon}$ implies $\frac{2}{\mu_f T} \leq \frac{\epsilon}{4}$. Finally, controlling the first term in \eqref{Eq: SC_DescentPot_FixedStep} requires   
	\begin{align}
	\label{Eq: Complexity_SC}
	\frac{\epsilon}{2} & \ge(1 - \mu_f \alpha)^T  \big((\ell(x_0) - \ell^\ast) + \sigma_f^2 + \|y_0 - y^\ast(x_0) \|^2 \big) 
	\end{align}
	which means we require:
	\begin{align}
	(1 - \mu_f \alpha)^T & \leq \frac{\epsilon}{2 \big((\ell(x_0) - \ell^\ast) + \sigma_f^2 + \|y_0 - y^\ast(x_0) \|^2 \big)} \nonumber\\
	\Longleftrightarrow T \log (1 - \mu_f \alpha) & \leq \log \bigg( \frac{\epsilon}{2 \big((\ell(x_0) - \ell^\ast) + \sigma_f^2 + \|y_0 - y^\ast(x_0) \|^2 \big)} \bigg) \nonumber\\
	\Longleftrightarrow T & \geq  \frac{ \log\bigg( \frac{2 \big((\ell(x_0) - \ell^\ast) + \sigma_f^2 + \|y_0 - y^\ast(x_0) \|^2  \big)}{\epsilon} \bigg)}{-\log(1 - \mu_f \alpha)} \nonumber\\
	\Longleftarrow T & \overset{(a)}{\geq}  \log\bigg( \frac{2 \big((\ell(x_0) - \ell^\ast) + \sigma_f^2 + \|y_0 - y^\ast(x_0) \|^2 \big)}{\epsilon} \bigg) \frac{1}{\mu_f \alpha},
	\end{align}
	where $(a)$ is due to $\log x \leq x - 1$ for all $x > 0$. This along with \eqref{Eq: alpha_epsilon} imply that we require at most $T = \tilde{\cal O}( \epsilon^{-1} )$ iterations to reach an $\epsilon$-optimal solution, i.e., $\mathbb{E} [ \ell(x_t) - \ell^\ast ] \leq \epsilon$. 
	Finally, as each iteration takes a batch of $K = {\cal O}( \log (T) )$ samples, the total sample complexity required to reach an $\epsilon$-optimal solution is bounded as $T = \tilde{\cal O}(\epsilon^{-1})$. 
	\qed
	}

\end{document}